\newtheorem{lemma}{Lemma}
\newtheorem{theorem}{Theorem}
\newtheorem*{remark}{Remark}
\newtheorem{proposition}{Proposition}
\title{The Fourier transform on $2$-step Lie groups.}
\author{Guillaume L\'{e}vy$^{1}$}
\address{$^{1}$Laboratoire Jacques-Louis Lions, UMR 7598, Université Pierre et Marie Curie, 75252 Paris Cedex 05, France.}
\email{$^{1}${levy@ljll.math.upmc.fr}}
\begin{document}
\begin{abstract}
In this paper, we investigate the behavior of the Fourier transform on finite dimensional $2$-step Lie groups and develop a general theory akin to that of the whole space or the torus.
We provide a familiar framework in which computations are made sensibly easier than with the usual representation-theoretic Fourier transform.
In addition, we study the 'singular frequencies' of the group, at which the canonical bilinear antisymmetric form degenerates.
We also exhibit a specific example for which partial degeneracy of the canonical form occurs, as opposed to the full degeneracy at the origin.
We thus extend the results from \cite{BahouriCheminDanchinHeisenberg}.
\end{abstract}
\maketitle
\section{Introduction}

\subsection{Definition of $2$-steps Lie groups}

Let $G$ be a real, simply connected nilpotent Lie group.
Denote by $g$ its Lie algebra.
It is well-known (see \cite{Hebisch}) that for such groups, the exponential map 
$$\exp : g \to G$$ 
is a global diffeomorphism from $g$ onto $G$.
This map becomes a Lie isomorphism once one endows the Lie algebra $g$ with the group law given by the Baker-Campbell-Hausdorff formula, which terminates after a finite number of terms since $G$ is nilpotent.
In order to lighten the notation, we will henceforth assume that $G$ is the set $\mathbb{R}^n$ endowed with some group law.

In the sequel, we will restrict our attention to nilpotent groups of step $2$, for which all commutators are central.
That is, we assume that for any $x,y,z \in \mathbb{R}^n$, we have $[x,[y,z]] = 0$.
Let us denote by $p$ the dimension of the center of the group.
Then, there exists an integer $m$, a decomposition $\mathbb{R}^n = \mathbb{R}^m \oplus \mathbb{R}^p$ and a bilinear, antisymmetric map 
$$\sigma : \mathbb{R}^m \times \mathbb{R}^m \to \mathbb{R}^p$$ 
such that, for $Z,Z' \in \mathbb{R}^m$ and $s,s' \in \mathbb{R}^p$,
\begin{equation}
 (Z,s)\cdot (Z',s') = (Z+Z', s+s' + \frac 12 \sigma(Z,Z')).
 \label{DefLoiGroupe}
\end{equation}
The map $\sigma$ and the integers $m,p$ are determined by the group law and dimension. 
Conversely, for any integers $m,p$ and any bilinear, antisymmetric map $\sigma : \mathbb{R}^m \times \mathbb{R}^m \to \mathbb{R}^p$, 
one may define a Lie group of step $2$ by the formula $\eqref{DefLoiGroupe}$.
Now, given $\lambda \in \mathbb{R}^p$, we define the matrix $\mathcal{U}^{(\lambda)} \in \mathcal{M}_m(\mathbb{R})$ as follows.
For any $Z,Z' \in \mathbb{R}^m$, there holds
$$\langle \lambda, \sigma(Z,Z')\rangle = \langle  Z,\mathcal{U}^{(\lambda)} \cdot Z' \rangle.$$
If $(s_1,\dots,s_p)$ denotes an orthonormal basis of $\mathbb{R}^p$, we also define $\mathcal{U}_k \in \mathcal{M}_m(\mathbb{R})$ by
$$\langle s_k, \sigma(Z,Z')\rangle = \langle Z, \mathcal{U}_k \cdot Z' \rangle.$$
Conversely, the map $\sigma$ may be defined from $(\mathcal{U}_k)_{1 \leq k \leq p}$ thanks to the equality
$$\sigma(Z,Z') = \left( \langle Z, \mathcal{U}_k \cdot Z' \rangle \right)_{1\leq k \leq p}. $$
Notice that the map $\lambda \mapsto \mathcal{U}^{(\lambda)}$ is linear, with its image spanned by $(\mathcal{U}_k)_{1 \leq k \leq p}$.
As $\mathcal{U}^{(\lambda)}$ is an antisymmetric matrix, its rank is an even number.
We define the integer $d$ by
$$2d := \max_{\lambda \in \mathbb{R}^p} \text{ rank } \mathcal{U}^{(\lambda)}.$$
The set $\widetilde{\Lambda} := \{\lambda \in \mathbb{R}^p \: | \: \text{ rank } \mathcal{U}^{(\lambda)} = 2d\}$ is then a non empty Zariski-open subset of $\mathbb{R}^p$ - 
in particular, it is open and dense in $\mathbb{R}^p$ for the Euclidean topology.
Since the map $\lambda \mapsto \mathcal{U}^{(\lambda)}$ is continuous, there exist $d$ continuous functions
$$\eta_j : \mathbb{R}^p \to \mathbb{R}_+, \:  1 \leq j \leq d,$$ 
such that, in a suitable basis (see for instance \cite{Tyrtyshnikov}),
$\mathcal{U}^{(\lambda)}$ reduces to the form
$$
\left[
\begin{array}{ccc}
0 & \eta(\lambda) & 0 \\
-\eta(\lambda) & 0 & 0 \\
0 & 0 & 0
\end{array}
\right] \in \mathcal{M}_m(\mathbb{R}),
$$
where
$$\eta(\lambda) := \text{ diag }(\eta_1(\lambda),\dots,\eta_d(\lambda)) \in \mathcal{M}_d(\mathbb{R}).$$ 
We loosely denote by $(x_1(\lambda),\dots, x_d(\lambda), y_1(\lambda),\dots,y_d(\lambda),r_1(\lambda),\dots,r_t(\lambda))$ such a basis.
For readability purposes, we will often shorten the notation to $(x_1,\dots, x_d, y_1,\dots,y_d,r_1,\dots,r_t)$.

\subsection{A few examples}

A prime example of a $2$-step Lie group is given for $d \geq 1$ by the Heisenberg group $\mathbb{H}^d$, which is the set $\mathbb{R}^{2d} \times \mathbb{R}$ endowed with the group law
$$(Z,s)\cdot(Z',s') = (Z+Z',s+s'+\frac 12 \sigma_c(Z,Z')), $$
where $\sigma_c$ is the canonical symplectic form on $\mathbb{R}^d \times \mathbb{R}^d$.
For $x,y,x',y' \in \mathbb{R}^d$, 
$$\sigma_c((x,y),(x',y')) = \langle y,x'\rangle - \langle y',x\rangle, $$
where $\langle \cdot, \cdot \rangle$ denotes the usual scalar product on $\mathbb{R}^d$.
We present here another example.
Given the matrices 
$$
J = \left[
\begin{array}{cc}
0 & 1 \\
-1 & 0
\end{array}
\right],
S = \left[
\begin{array}{cc}
0 & 1 \\
1 & 0
\end{array}
\right] \in \mathcal{M}_2(\mathbb{R}),
$$
we define, for $\lambda = (\lambda_1,\lambda_2) \in \mathbb{R}^2$, the matrix
$$
\mathcal{U}^{(\lambda)} = \left[
\begin{array}{cc}
\lambda_1 J & \lambda_2 S \\
-\lambda_2 S & -\lambda_1 J
\end{array}
\right] \in \mathcal{M}_4(\mathbb{R}).
$$
On $\mathbb{R}^{4} \times \mathbb{R}^2$, we consider the group law generated by the matrices $\mathcal{U}^{(\lambda)}$.
That is, for $Z,Z' \in \mathbb{R}^4$ and $s = (s_1,s_2) ,s'=(s_1',s_2') \in \mathbb{R}^2$, we have
$$(Z,s)\cdot(Z',s') := \left(Z+Z',s_1+s_1'+ \frac 12 \left\langle Z, \left[
\begin{array}{cc}
 J & 0 \\
0 & - J
\end{array}
\right]Z' \right\rangle, s_2+s_2'+ \frac 12 \left\langle Z, \left[
\begin{array}{cc}
0 &  S \\
- S & 0
\end{array}
\right]Z' \right\rangle\right). $$
The positive eigenvalues of $\mathcal{U}^{(\lambda)}$ are $\eta_{\pm}(\lambda) = \left| |\lambda_1| \pm |\lambda_2| \right|$. 
In particular, $\eta_-(\lambda)$ vanishes on the straight lines $\{\lambda \in \mathbb{R}^2 \text{ s.t. }|\lambda_1| = |\lambda_2|\}$, whereas $\eta_+(\lambda) > 0$ for any $\lambda$ in $\mathbb{R}^2 \setminus \{0\}$.

\subsection{Definition of the Fourier transform}

We now turn to the practical aspects of the theory we aim at.
Given $(v_1,\dots,v_m)$ any basis of $\mathbb{R}^m$ and $(s_1,\dots,s_p)$ any basis of $\mathbb{R}^p$, a basis of $g$ is given by 
$\{V_i, 1 \leq i \leq m\} \cup \{S_k, 1 \leq k \leq p\},$ with
$$V_i := \partial_i - \frac 12 \sum_{k=1}^m (\mathcal{U}_k \cdot Z)_i \partial_{s_k} = \partial_i - \frac 12 \sum_{k,j=1}^m (\mathcal{U}_k)_{ij} Z_j \partial_{s_k},$$
$$S_k = \partial_{s_k}.$$ 
Choosing for $(v_1,\dots,v_m)$ a basis $(x_1,\dots,x_d,y_1,\dots,y_d,r_1,\dots,r_t)$, the family $(V_i)_{1 \leq i \leq m}$ decomposes as
$$X_j = \partial_{x_j} + \frac 12 \eta_j(\nabla_s)y_j \text{ for } 1 \leq j \leq d,$$
$$Y_j = \partial_{y_j} - \frac 12 \eta_j(\nabla_s)x_j \text{ for } 1 \leq j \leq d,$$
$$R_l = \partial_{r_l} \text{ for } 1 \leq l \leq t,$$
where the operator $\eta_j(\nabla_s)$ satisfies
$$\eta_j(\nabla_s)\left(e^{i\langle \lambda, \cdot \rangle} \right)(s) = i \eta_j(\lambda)e^{i\langle \lambda, s \rangle}.  $$
We define similarly the right-invariant vector fields $\widetilde{V}_i$ for $1 \leq i \leq m$ by
$$\widetilde{V}_i := \partial_i + \frac 12 \sum_{k=1}^m (\mathcal{U}_k \cdot Z)_i \partial_{s_k}.$$
In the basis $(x_1,\dots,x_d,y_1,\dots,y_d,r_1,\dots,r_t)$ defined above, the family $(\widetilde{V}_i)_{1 \leq i \leq m}$ decomposes as
$$\widetilde{X}_j = \partial_{x_j} - \frac 12 \eta_j(\nabla_s)y_j \text{ for } 1 \leq j \leq d,$$
$$\widetilde{Y}_j = \partial_{y_j} + \frac 12 \eta_j(\nabla_s)x_j \text{ for } 1 \leq j \leq d,$$
$$\widetilde{R}_l = R_l \text{ for } 1 \leq l \leq t.$$
For $(\lambda,\nu,w)$ in $\widetilde{\Lambda} \times \mathbb{R}^t \times \mathbb{R}^n$ with $w = (x,y,r,s)$, 
we define the irreducible unitary reprentations of $\mathbb{R}^n$ on $L^2(\mathbb{R}^d)$
$$\left(u_w^{\lambda,\nu}\phi\right)(\xi) := e^{-i \langle\nu, r\rangle} e^{-i\langle \lambda, s + [\xi + \frac x2,y]\rangle}\phi(\xi + x). $$
With these notations, the Fourier transform of the function $f \in L^1(\mathbb{R}^n)$ at the point 
$$(\lambda,\nu) \in \widetilde{\Lambda} \times \mathbb{R}^t$$ 
is a unitary operator acting on $L^2(\mathbb{R}^n)$ with
$$\mathcal{F}^g(f)(\lambda,\nu) := \int_{\mathbb{R}^n} f(w)u^{\lambda,\nu}_w dw. $$
Thinking of this operator as an infinite matrix, we look at its coefficients in a suitable basis.
For $n,m \in \mathbb{N}^d$ and $(\lambda,\nu) \in \widetilde{\Lambda} \times \mathbb{R}^t$, we let
$$\widetilde{\mathcal{F}_g}(f)(n,m,\lambda,\nu) := \left(\mathcal{F}^g(f)(\lambda,\nu)H_{m,\eta(\lambda)} \left| \right. H_{n,\eta(\lambda)} \right)_{L^2(\mathbb{R}^d)}. $$
Expanding out the scalar product, we notice the operator equality
$$\widetilde{\mathcal{F}_g}(f)(n,m,\lambda,\nu) = \mathcal{F}_{\mathbb{R}^t}\left(\mathcal{F}_g(f)(n,m,\lambda)\right)(\nu) =  
\mathcal{F}_g(\mathcal{F}_{\mathbb{R}^t}(f)(\nu))(n,m,\lambda),$$
where $\mathcal{F}_{\mathbb{R}^t}$ denotes the standard Fourier transform on the commutative group $(\mathbb{R}^t,+)$ and
$$\mathcal{F}_g(f)(n,m,\lambda) := 
\int_{(\mathbb{R}^d)^3 \times \mathbb{R}^p} 
f(x,y,s) e^{-i \langle \lambda, s\rangle} e^{-i \langle \eta(\lambda) \cdot (\xi + \frac x2), y\rangle}  
H_{m,\eta(\lambda)}(\xi + x) H_{n,\eta(\lambda)}(\xi) d\xi dx dy ds. $$
The action of the Fourier transform $\mathcal{F}_{\mathbb{R}^t}$ is already well-known and commutes with that of $\mathcal{F}_g$.
$$\text{Henceforth, we will assume that }t = 0 \text{ and will not mention }\nu \text{ anymore.}$$
We will focus solely on the properties of $\mathcal{F}_g$.
Performing an obvious change of variable inside the integral leads to the more symmetric form
$$\mathcal{F}_g(f)(n,m,\lambda) = 
\int_{\mathbb{R}^d \times \mathbb{R}^n} f(w) e^{-i \langle \lambda, s\rangle} e^{-i \langle \eta(\lambda) \cdot \xi, y\rangle}  
H_{m,\eta(\lambda)}\left(\xi + \frac x2\right) H_{n,\eta(\lambda)}\left(\xi - \frac x2\right) d\xi dw. $$
Denoting $\hat{w} = (n,m,\lambda)$ and letting
$$\mathcal{W}(\hat{w},x,y) := \int_{\mathbb{R}^d} e^{-i \langle \eta(\lambda) \cdot \xi, y\rangle}  
H_{m,\eta(\lambda)}\left(\xi + \frac x2\right) H_{n,\eta(\lambda)}\left(\xi - \frac x2\right) d\xi, $$
we see that
$$\mathcal{F}_g(f)(n,m,\lambda) = \int_{\mathbb{R}^n} \overline{e^{i \langle \lambda, s\rangle}\mathcal{W}(\hat{w},x,y)} f(w) dw.  $$
If one thinks of the family of functions $(\mathcal{W}(\hat{w},\cdot,\cdot))_{\hat{w} \in \mathbb{N}^{2d} \times \widetilde{\Lambda}}$ as a non commutative replacement of the characters on $\mathbb{R}^m$, then
the latter formula is very similar to that of the usual Fourier transform on $\mathbb{R}^n$.

\subsection{The frequency space}

Let us now describe what should be $\hat{g}$, the frequency space of $g$.
Since we have defined $\mathcal{F}_g(f)(n,m,\lambda)$ for $f \in L^1(\mathbb{R}^n)$, $n,m \in \mathbb{N}^d$ and $\lambda \in \widetilde{\Lambda}$, the set 
$$\widetilde{g}_E := \mathbb{N}^d \times \mathbb{N}^d \times \widetilde{\Lambda}$$
is a natural choice.
Endowing $\widetilde{g}_E$ with the distance
$$\rho_E((n,m,\lambda),(n',m',\lambda'))^2 := |\eta(\lambda)\cdot(n+m) - \eta(\lambda')\cdot(n'+m')|^2 + |(n-m) - (n'-m')|^2 + |\lambda-\lambda'|^2 $$
allows to account for the different types of decay (see Section \ref{sec:StudyFourierKernel} for more details).
Hence, the metric space $(\widetilde{g}_E,\rho_E)$ seems to be a reasonable candidate for the Fourier space of $g$.
However, it fails to be complete : for instance, denoting 
$$\Lambda_0 := \mathbb{R}^p \setminus \widetilde{\Lambda}, $$
points of the type $(0,0,\lambda_0) \in \mathbb{N}^{2d} \times \Lambda_0$ belong to the completion of $(\widetilde{g}_E,\rho_E)$. 
While it is possible to directly describe the completion of $(\widetilde{g}_E,\rho_E)$, writing exactly how we extend both $\widetilde{g}_E$ and $\rho_E$ is tedious.
It is comparatively easier to look first at \emph{Euclidean} isometric embedding of $(\widetilde{g}_E,\rho_E)$.
The set underlying the metric space, denoted by $\widetilde{g}$, will be an embedding of $\widetilde{g}_E$ into, say, $\mathbb{R}^n$.
The distance, however, will simply be a restriction of a standard Euclidean distance $|\cdot|$ on $\widetilde{g}$.
The main idea is that we reduce to a well-known distance, at the cost of a more intricate Fourier space.

We now make precise the ideas above.
For $\lambda \in \widetilde{\Lambda}$ and $1 \leq j \leq d$, define
$$g_j(\lambda) = \left( (\eta_j(\lambda) \cdot \mathbb{N}) \times \mathbb{Z}\right)_+ := 
\left\{(a_j,b_j) \in \mathbb{R}_+ \times \mathbb{Z} \:| \: \frac{a_j \pm \eta_j(\lambda) b_j}{2} \in \eta_j(\lambda)\cdot \mathbb{N}\right\} $$
and 
$$g(\lambda) := 
\left\{(a,b) \in (\mathbb{R}_+)^d \times \mathbb{Z}^d \:| \: \frac{a \pm \eta(\lambda)\cdot b}{2} \in \eta(\lambda)\cdot \mathbb{N}^d\right\}= \prod_{j=1}^d g_j(\lambda). $$
A way to think about the set $g_j(\lambda)$ is the following.
For any $b_j \in \mathbb{Z}$ and $\lambda \in \widetilde{\Lambda}$, we have 
$$ \{a_j \in \mathbb{R}_+ \: | \: (a_j,b_j) \in g_j(\lambda)\} = \{(2n+|b_j|)\eta_j(\lambda) \: | \: n \in \mathbb{N}\}.$$
Otherwise said, given $b_j$ and $\lambda$, the admissible $a_j$'s form a half-infinite comb of width $2\eta_j(\lambda)$ starting at $\eta_j(\lambda)|b_j|$.
The set $\widetilde{g}$ is now defined by
$$\widetilde{g} := \bigsqcup_{\lambda \in \widetilde{\Lambda}} g(\lambda) \times \{\lambda\}. $$
As previously explained, we endow $\widetilde{g}$ with the distance inherited from the Euclidean distance on $\mathbb{R}^d \times \mathbb{Z}^d \times \mathbb{R}^p$.
The isometry between $\widetilde{g}_E$ and $\widetilde{g}$ is given by
$$i_E :
\left\{
\begin{array}{ccc}
 \widetilde{g}_E &\longrightarrow& \widetilde{g}\\
 (n,m,\lambda) &\longmapsto& (\eta(\lambda)\cdot (n+m), n-m, \lambda).
\end{array}
\right.
$$
Of course, as $\widetilde{g}_E$ was not complete, $\widetilde{g}$ is not either.
We finally describe the completion of $(\widetilde{g},|\cdot|)$.
Consider a sequence $(\lambda_p)_p \subset \widetilde{\Lambda}$ converging to $\lambda_0 \in \Lambda_0$ for which $\eta_j(\lambda_0) = 0$.
At least formally, we expect the constraint defining $g_j(\cdot)$ to become vacuous and hence, it seems natural to let
$$g_j(\lambda_0) = ((0 \cdot \mathbb{N}) \times \mathbb{Z})_+ := \mathbb{R}_+ \times \mathbb{Z}. $$
We then define as before, for $\lambda \in \mathbb{R}^p$,
$$g(\lambda) := \prod_{j=1}^d g_j(\lambda).$$
With this convention, we expect the completion of $\widetilde{g}$ for $|\cdot|$ to be
$$\hat{g} :=  \bigsqcup_{\lambda \in \mathbb{R}^p} g(\lambda) \times \{\lambda\}.$$
This is indeed true, as shown by Proposition \ref{PropositionCompletiongChapeau}.

\section{Description of the results}

The main goal of this paper is to establish a familiar Fourier theory on stratified nilpotent Lie groups of step $2$.
We begin from the well-known representation-theoretic Fourier transform, whose main properties are recalled in Appendix \ref{appendix:FourierRepresentation}.
In particular, this Fourier transofmr is an intertwining operator for the subelliptic laplacian, acting on functions on $g$ 
and a rescaled version of the quantum harmonic oscillator, acting on functions on $\mathbb{R}^d$.
Since rescaled Hermite functions form an eigenbasis of $L^2(\mathbb{R}^d)$ for the quantum harmonic oscillator, 
it is natural to expand the representation-theoretic Fourier transform on a rescaled Hermite basis.
This idea, borrowed from \cite{BahouriCheminDanchinHeisenberg}, leads to the definition of a frequency space $\widetilde{g}$.
This frequency space being noncomplete, we describe its completion $\hat{g}$ and prove some of its properties in Section \ref{sec:TopologyMeasure}.

In Section \ref{sec:StudyFourierKernel}, we prove that the so-called 'Fourier kernel' $\mathcal{W}$ does possess a continuous extension from $\widetilde{g}$ to $\hat{g}$.
Moreover, we give an explicit expression of the kernel at the boundary points of $\hat{g}$ as a power series involving some combinatorial quantities $F_{\ell_1,\ell_2}(b_j)$.
These quantities are known in the literature as the alternate Vandermonde convolution (see [reference ?] for more detials).

In Section \ref{sec:IndependantCentralVariable}, we prove a lemma on functions approximating a Dirac mass in their last variable.
As a consequence, we are able to define the Fourier transform of functions independant of the central variable, 
much as we may define the Fourier transform of functions on $\mathbb{R}^n$ independant of one variable.

In Section \ref{sec:ComputingKernelBoundary}, we give an integral representation formula for the Fourier kernel as the boundary, which may be of independant interest.
Finding this formula relies on several space-frequency properties of the original Fourier kernel $\mathcal{W}$ 
akin to the familiar derivation-multiplication duality in the torus or the whole space for the usual, commutative Fourier transform.

The interested reader will find in Appendix $\ref{appendix:Hermite}$ some standard computations and definitions involving the Hermite functions.

\section{Topology and measure theory on $\hat{g}$}
\label{sec:TopologyMeasure}
\subsection{The completion of the frequency space.}
We begin by giving some foundation to the theory, by proving that the completion of the natural frequency space $\widetilde{g}$ 
for the distance $|\cdot|$ is indeed what it ought to be.
\begin{proposition}
 The closure of $\widetilde{g}$ in $(\mathbb{R}_+)^d \times \mathbb{Z}^d \times \mathbb{R}^p$ for the distance $|\cdot|$ is equal to $\hat{g}$.
 \label{PropositionCompletiongChapeau}
\end{proposition}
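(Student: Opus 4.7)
The plan is to prove the two inclusions $\overline{\widetilde{g}} \subseteq \hat{g}$ (closedness of $\hat{g}$ in the ambient space) and $\hat{g} \subseteq \overline{\widetilde{g}}$ (density of $\widetilde{g}$ in $\hat{g}$), where closures are taken in $(\mathbb{R}_+)^d \times \mathbb{Z}^d \times \mathbb{R}^p$ equipped with the Euclidean distance. Both inclusions can be carried out coordinate by coordinate in the first $d$ entries, thanks to the product structure $g(\lambda) = \prod_j g_j(\lambda)$.

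For the closedness inclusion, I would take a convergent sequence $(a_p, b_p, \lambda_p) \in \widetilde{g}$ with limit $(a, b, \lambda)$. Since $\mathbb{Z}^d$ is discrete, $b_p = b$ eventually, and continuity of each $\eta_j$ gives $\eta_j(\lambda_p) \to \eta_j(\lambda)$. Writing $a_{p,j} = (2n_{p,j} + |b_j|)\eta_j(\lambda_p)$ with $n_{p,j} \in \mathbb{N}$, two situations arise. If $\eta_j(\lambda) > 0$, then the sequence $n_{p,j} = \bigl(a_{p,j}/\eta_j(\lambda_p) - |b_j|\bigr)/2$ converges to a real number that must in fact be a nonnegative integer $n_j$, so $a_j = (2n_j + |b_j|)\eta_j(\lambda)$ and $(a_j, b_j) \in g_j(\lambda)$. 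If $\eta_j(\lambda) = 0$, then by the convention $g_j(\lambda) = \mathbb{R}_+ \times \mathbb{Z}$ the membership is automatic. Hence $(a, b, \lambda) \in \hat{g}$.

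For the density inclusion, take $(a, b, \lambda_0) \in \hat{g}$. If $\lambda_0 \in \widetilde{\Lambda}$, the point already lies in $\widetilde{g}$. Otherwise, using that $\widetilde{\Lambda}$ is Zariski-open and dense in $\mathbb{R}^p$, I would pick $\lambda_p \in \widetilde{\Lambda}$ with $\lambda_p \to \lambda_0$ and construct nonnegative integers $n_{p,j}$ so that $a_{p,j} := (2n_{p,j} + |b_j|)\eta_j(\lambda_p)$ approximates $a_j$. For coordinates $j$ with $\eta_j(\lambda_0) > 0$, the membership $(a_j, b_j) \in g_j(\lambda_0)$ gives a fixed $n_j \in \mathbb{N}$ with $a_j = (2n_j + |b_j|)\eta_j(\lambda_0)$; then setting $n_{p,j} := n_j$ and using continuity of $\eta_j$ yields $a_{p,j} \to a_j$. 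For coordinates $j$ with $\eta_j(\lambda_0) = 0$, the value $a_j \in \mathbb{R}_+$ is unrestricted, and since $\lambda_p \in \widetilde{\Lambda}$ forces $\eta_j(\lambda_p) > 0$, I would take $n_{p,j} := \max\bigl(0, \lfloor (a_j/\eta_j(\lambda_p) - |b_j|)/2 \rfloor\bigr)$, which yields $|a_{p,j} - a_j| \leq 2\eta_j(\lambda_p) \to 0$.

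The main obstacle is the dichotomy between coordinates where $\eta_j(\lambda_0) > 0$, for which the admissible values form a rigid discrete lattice approximated by nearby rigid lattices, and coordinates where $\eta_j(\lambda_0) = 0$, for which the admissible values fill all of $\mathbb{R}_+$ and are approached by finer and finer lattices as $\eta_j(\lambda_p) \to 0$. Handling both regimes simultaneously is precisely what the ad hoc definition of $g_j$ at degenerate $\lambda$ is designed for; once the componentwise constructions are in place, convergence in the Euclidean distance is automatic since only finitely many coordinates are involved.
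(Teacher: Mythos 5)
Your proof is correct and follows essentially the same route as the paper: both inclusions are treated coordinate by coordinate, splitting according to whether $\eta_j$ vanishes at the limit frequency, with a fixed integer index in the nondegenerate case and a divergent integer index producing an ever-finer lattice in the degenerate case. The only cosmetic caveat is that your bound $|a_{p,j}-a_j|\le 2\eta_j(\lambda_p)$ can fail when the $\max(0,\cdot)$ truncation is active (i.e.\ $a_j=0$ and $|b_j|>2$), but one still has $|a_{p,j}-a_j|\le (2+|b_j|)\eta_j(\lambda_p)\to 0$, so the conclusion is unaffected.
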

\begin{proof}
 Let $(a(q),b(q),\lambda(q))_{q \in \mathbb{N}}$ be a Cauchy sequence in $\widetilde{g}$.
 Since $\widetilde{g}$ is a subset of $(\mathbb{R}_+)^d \times \mathbb{Z}^d \times \mathbb{R}^p$, which is complete for the distance $|\cdot|$, there exists 
 $(a,b,\lambda) \in (\mathbb{R}_+)^d \times \mathbb{Z}^d \times \mathbb{R}^p$ such that
 $$(a(q),b(q),\lambda(q)) \longrightarrow (a,b,\lambda) \in (\mathbb{R}_+)^d \times \mathbb{Z}^d \times \mathbb{R}^p \text{ as } q \to \infty. $$
 To simplify the exposition of the proof, we look separately at each component $g_j(\lambda)$ for $1 \leq j \leq d$ and distinguish between two cases.
 \begin{itemize}
  \item Assume that $\eta_j(\lambda) \neq 0$. 
  Then, there exists $c > 0$ such that for all $q \in \mathbb{N}$,
  $$\eta_j(\lambda(q)) \geq c. $$
  Hence, the two sequences of integers 
  $$\left(\frac{\eta_j(\lambda(q))^{-1}a_j(q) \pm b_j(q)}{2}\right)_{q\in \mathbb{N}}$$
  are also Cauchy sequences.
  Passing to the limit in the equations above give
  $$\frac{\eta_j(\lambda)^{-1}a_j \pm b_j}{2} \in \mathbb{N},$$
  which exactly says that $(a_j,b_j) \in g_j(\lambda)$.
  \item Assume now that $\eta_j(\lambda) = 0$.
  Since $(b_j(q))_{q\in\mathbb{N}}$ is a Cauchy sequence of integers, we immediately get $b \in \mathbb{Z}$.
  On the other hand, as $(a(q))_{q\in\mathbb{N}}$ is a Cauchy sequence in $\mathbb{R}_+$, we have $a\in \mathbb{R}_+$.
  Thus, we again have $(a,b) \in g_j(\lambda)$, this time in the extended sense. 
 \end{itemize}
 Up to now, we have shown that $\hat{g}$ contains the closure of $\widetilde{g}$ for the Euclidean distance.
 Conversely, let $(a,b,\lambda_0) \in \hat{g} \setminus \widetilde{g}$ (in particular, $\lambda_0 \in \Lambda_0$).
 Denote 
 $$J(\lambda_0) := \{1 \leq j \leq d \: | \: \eta_j(\lambda_0) = 0\}. $$
 Since $\widetilde{\Lambda}$ is dense in $\mathbb{R}^p$, there exists a sequence $(\lambda(q))_{q\in\mathbb{N}} \subset \widetilde{\Lambda}$ with
 $$\lambda(q) \to \lambda_0 \text{ as } q \to \infty.$$
 Regarding $b$, we simply let $b(q) = b$ for all $q \in \mathbb{N}$.
 We again distinguish between two cases to define the sequence $(a_j(q))_{q\in\mathbb{N}}$.
 \begin{itemize}
  \item If $j \not\in J(\lambda_0)$, there exists $h^-_j \in \mathbb{N}$ such that 
  $$ \frac{a_j - \eta_j(\lambda_0)b_j}{2} = h^-_j\eta_j(\lambda_0).$$
  Defining for $q \in \mathbb{N}$
  $$a_j(q) := \eta_j(\lambda(q))(2h^-_j+b_j),$$
  we see that, for any $q\in\mathbb{N}$,
  $$\frac{a_j(q) - \eta_j(\lambda(q))b_j}{2} \in \eta_j(\lambda(q))\cdot \mathbb{N}.$$
  Hence, for any $q\in \mathbb{N}$, we have $(a_j(q),b_j,\lambda(q)) \in g_j(\lambda(q))$ and moreover, as $q \to \infty$,
  $$a_j(q) \to  \eta_j(\lambda_0)(2h^-+b) = a_j.$$
  \item If $j \in J(\lambda_0)$, we use a similar strategy.
  Let $(h^-_j(q))_{q\in\mathbb{N}}$ be a sequence of integers tending to infinity such that $2h^-_j(q)\eta_j(\lambda(q)) \to a_j$ as $q \to \infty$. 
  Similarly to the first case, we define, for $q \in \mathbb{N}$,
  $$a_j(q) := \eta_j(\lambda(q))(2h^-_j(q)+b_j). $$
  In the particular case where $a_j = 0$, we do not want to let $h^-_j(q) \equiv 0$ for all $q \in \mathbb{N}$, so as to ensure that $2h^-_j(q) + b_j \geq 0$ for $q$ big enough.
  Of course, we have
  $$(a_j(q),b_j)\in g_j(\lambda(q)) $$
  for $q$ big enough and
  $$a_j(q) \to a_j \text{ as } q \to \infty. $$
 \end{itemize}
Gathering what we did for each coordinate, we have found a sequence $(a(q),b,\lambda(q))_{q\in\mathbb{N}} \subset \widetilde{g}$ converging to $(a,b,\lambda)$ for $|\cdot|$.
This closes the proof.
\end{proof}
As a consequence of Proposition \ref{PropositionCompletiongChapeau}, $\hat{g}$ is a closed subset of $(\mathbb{R}_+)^d \times \mathbb{Z}^d \times \mathbb{R}^p$ for the standard Euclidean distance.
Hence, it is trivially locally compact.

\subsection{A measure on $\hat{g}$}
Owing to the fibered-looking structure of $\hat{g}$, it seems reasonable to define a measure on it through its disintegration on each $g(\lambda)$ for $\lambda \in \mathbb{R}^p$.
 In turn, defining a measure on each $g_j(\lambda)$ for $1 \leq j \leq d$ immediately gives rise to a measure on $g(\lambda)$, simply by taking the tensor product.
 Finally, denoting 
 $$g_{j,b_j}(\lambda) := \{a_j \in \mathbb{R}_+ \: | \: (a_j,b_j) \in g_j(\lambda)\}, $$
 we have the decomposition 
 $$g_j(\lambda) := \bigsqcup_{b_j \in \mathbb{Z}} g_{j,b_j}(\lambda) \times \{b_j \}.$$ 
 We now construct a measure $d\hat{w}$ on $\hat{g}$ following a bottom-up procedure, starting from the $g_{j,b_j}(\lambda)$ and ending with $\hat{g}$.
 Let $1 \leq j \leq d$ and $\lambda \in \mathbb{R}^p$ be such that $\eta_j(\lambda) \neq 0$.
 Given $b_j \in \mathbb{Z}$ and
 $$\theta \in \mathcal{C}_c(g_{j,b_j}(\lambda)),$$
 the measure $d\mu^{\lambda}_{j,b_j}$ on $g_{j,b_j}(\lambda)$ is defined by the equality
 $$\int_{g_{j,b_j}(\lambda)} \theta(a_j) d\mu^{\lambda}_{j,b_j}(a_j) := 2\eta_j(\lambda) \sum_{a_j \in g_{j,b_j}(\lambda)} \theta(a_j). $$
 That is, in this case, $d\mu^{\lambda}_{j,b_j}$ is simply a rescaled version of the counting measure on the discrete set $g_{j,b_j}(\lambda)$.
 If $\eta_j(\lambda) = 0$, we simply let $d\mu^{\lambda}_{j,b_j}$ be the Lebesgue measure on $g_{j,b_j}(\lambda)$, which is none other that $\mathbb{R}_+$.
 Now, the measure $d\mu^{\lambda}_j$ is defined as the integration of the family $(d\mu^{\lambda}_{j,b_j})_{b_j\in \mathbb{Z}}$ 
 with respect to the counting measure on $\mathbb{Z}$.
 That is, given $\theta \in \mathcal{C}_c(g_j(\lambda))$, we have
 $$\int_{g_j(\lambda)} \theta(a_j,b_j) d\mu^{\lambda}_j(a_j,b_j) := \sum_{b_j \in \mathbb{Z}} \int_{g_{j,b_j}(\lambda)} \theta(a_j,b_j) d\mu^{\lambda}_{j,b_j}(a_j).  $$
 On $g(\lambda)$, we define the measure $d\mu^{\lambda}$ as the tensor product of the $d\mu^{\lambda}_j$, that is
 $$d\mu^{\lambda} := \bigotimes_{j=1}^d d\mu^{\lambda}_j.$$
 Then, the measure $d\hat{w}$ on $\hat{g}$ is the integration of the family $(d\mu^{\lambda})_{\lambda \in \mathbb{R}^p}$ with respect to the Lebesgue measure on $\mathbb{R}^p$.
 That is, for $\theta \in \mathcal{C}_c(\hat{g})$, we have
 $$\int_{\hat{g}} \theta(\hat{w}) d\hat{w} := \int_{\mathbb{R}^p} \left( \int_{g(\lambda)}\theta(a,b,\lambda) d\mu^{\lambda}(a,b)\right) d\lambda. $$
 The relevance of these successive definitions is summarized by the following proposition.
 \begin{proposition}
  The measure field $\lambda \mapsto d\mu^{\lambda}$ is weak-$*$ continuous on $\mathbb{R}^p$.
 \end{proposition}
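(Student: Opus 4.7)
The plan is to fix $\lambda_0 \in \mathbb{R}^p$ and a test function $\theta \in \mathcal{C}_c(\hat{g})$ and prove continuity at $\lambda_0$ of
$$\Phi(\lambda) := \int_{g(\lambda)} \theta(a,b,\lambda)\, d\mu^{\lambda}(a,b).$$
Since the projection of $\mathrm{supp}(\theta)$ onto the discrete $\mathbb{Z}^d$-factor is a compact subset of $\mathbb{Z}^d$, hence finite, only finitely many $b$ can contribute for $\lambda$ in a neighbourhood of $\lambda_0$. Together with the product decomposition $d\mu^{\lambda} = \bigotimes_j d\mu^{\lambda}_j$, this reduces the question to continuity in $\lambda$ of each one-dimensional partial integral $\int \theta(\dots,a_j,\dots)\, d\mu^{\lambda}_{j,b_j}(a_j)$, together with a uniform-in-$\lambda$ bound on the total mass that will allow Fubini and dominated convergence to close the argument.

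I would then partition $\{1,\dots,d\}$ into $J(\lambda_0) := \{j : \eta_j(\lambda_0) = 0\}$ and its complement. For $j \notin J(\lambda_0)$, continuity of $\eta_j$ gives $\eta_j(\lambda) \geq c > 0$ on a neighbourhood of $\lambda_0$, so $d\mu^{\lambda}_{j,b_j}$ is the $2\eta_j(\lambda)$-rescaled counting measure on the half-comb $\{(2n+|b_j|)\eta_j(\lambda) : n \in \mathbb{N}\}$; only finitely many $n$ contribute because $\theta$ has compact support in the $a_j$-variable, and continuity in $\lambda$ follows directly from joint continuity of $\eta_j$ and $\theta$.

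The crux is $j \in J(\lambda_0)$, where the measure switches from discrete to continuous. Here I would rewrite
$$\int \theta(\dots,a_j,\dots)\, d\mu^{\lambda}_{j,b_j}(a_j) = 2\eta_j(\lambda)\sum_{n \in \mathbb{N}}\theta\bigl(\dots,(2n+|b_j|)\eta_j(\lambda),\dots\bigr)$$
and recognise the right-hand side as a Riemann sum with step $2\eta_j(\lambda)$ for the Lebesgue integral $\int_0^\infty \theta(\dots,a_j,\dots)\, da_j$. Since $\eta_j(\lambda) \to 0$ and $\theta$ is uniformly continuous with bounded support, this Riemann sum converges to the integral, which is precisely $\int \theta\, d\mu^{\lambda_0}_{j,b_j}$ in the extended sense; the normalisation factor $2\eta_j(\lambda)$ put into the very definition of $d\mu^{\lambda}_{j,b_j}$ was designed precisely to make this reconciliation work. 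The main obstacle is that several coordinates may degenerate simultaneously while others stay non-degenerate, producing a mixed multi-dimensional Riemann-sum/continuous-parameter limit; my plan to overcome this is to apply dominated convergence inside the finite sum over $b$, with the elementary bound $2\eta_j(\lambda)\,\#\{n : (2n+|b_j|)\eta_j(\lambda) \leq A\} \leq A + 2\eta_j(\lambda)$ on each fiber providing a uniform dominating constant, and then to pass to the limit coordinatewise via iterated Fubini.
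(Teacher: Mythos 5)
Your proposal is correct and follows essentially the same route as the paper: the key step in both is recognising $2\eta_j(\lambda)\sum_{n}\theta((2n+|b_j|)\eta_j(\lambda))$ as a Riemann sum converging to the Lebesgue integral when $\eta_j(\lambda)\to 0$, which is exactly the content of the single-coordinate lemma the paper proves. The reduction to that lemma (finiteness of the contributing $b$'s, the tensor-product structure, the trivial non-degenerate coordinates, and the uniform mass bound) is precisely what the paper dismisses as ``immediate,'' so you have merely written out the details it omits.
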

 The proof of this proposition is immediate once we have the next lemma at hand; for this reason, we will only prove the lemma.
 \begin{lemma}
 Let $\lambda_0 \in \Lambda_0$ with, for instance, $\eta_j(\lambda_0) = 0$.
 Let $b_j \in \mathbb{Z}$.
  Then, if $\lambda \to \lambda_0$, we have
  $$d\mu^{\lambda}_{j,b_j} \rightharpoonup^* d\mu^{\lambda_0}_{j,b_j}$$
  in the weak sense of measures.
 \end{lemma}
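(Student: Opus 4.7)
The plan is to recognize the integral against $d\mu^\lambda_{j,b_j}$ as a Riemann sum approximation of the Lebesgue integral on $\mathbb{R}_+$. Both measures are naturally Radon measures on $\mathbb{R}_+$ (viewing the discrete set $g_{j,b_j}(\lambda)$ as a subset of $\mathbb{R}_+$), so the weak-$*$ convergence to be established is tested against an arbitrary $\theta \in \mathcal{C}_c(\mathbb{R}_+)$, say with $\mathrm{supp}(\theta) \subset [0,M]$. For $\lambda$ close to $\lambda_0$, two cases may arise: if $\eta_j(\lambda) = 0$, then by definition $d\mu^\lambda_{j,b_j} = d\mu^{\lambda_0}_{j,b_j}$ is the Lebesgue measure and there is nothing to check; I may thus focus on the nontrivial case $\eta_j(\lambda) > 0$.

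In that case, the identification $g_{j,b_j}(\lambda) = \{(2n+|b_j|)\eta_j(\lambda) : n \in \mathbb{N}\}$ recorded earlier in the excerpt lets me rewrite
$$\int \theta\, d\mu^\lambda_{j,b_j} = 2\eta_j(\lambda)\sum_{n=0}^{\infty}\theta\bigl((2n+|b_j|)\eta_j(\lambda)\bigr),$$
and the successive sample points are spaced by exactly $2\eta_j(\lambda)$. The right-hand side is thus the left-endpoint Riemann sum, with mesh $2\eta_j(\lambda)$, of $\int_{|b_j|\eta_j(\lambda)}^\infty \theta(a)\,da$. By continuity of $\eta_j$, we have $\eta_j(\lambda)\to 0$, so both the mesh $2\eta_j(\lambda)$ and the lower endpoint $|b_j|\eta_j(\lambda)$ tend to zero. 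Since $\theta$ is uniformly continuous with compact support, only $O(1/\eta_j(\lambda))$ terms contribute, and a standard Riemann sum error bound controls the difference between the sum and $\int_{|b_j|\eta_j(\lambda)}^\infty \theta(a)\,da$ by $M\,\omega_\theta\bigl(2\eta_j(\lambda)\bigr)$, where $\omega_\theta$ denotes the modulus of continuity of $\theta$.

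Combining this with the trivial bound $\bigl|\int_0^{|b_j|\eta_j(\lambda)} \theta(a)\,da\bigr| \leq |b_j|\eta_j(\lambda)\|\theta\|_\infty \to 0$ yields
$$\int \theta\, d\mu^\lambda_{j,b_j} \longrightarrow \int_0^\infty \theta(a)\,da = \int \theta\, d\mu^{\lambda_0}_{j,b_j},$$
which is the desired weak-$*$ convergence. There is no serious obstacle: the argument reduces to textbook Riemann sum convergence, and the only minor bookkeeping is that the lower endpoint of integration drifts with $\lambda$, which is painless since $|b_j|\eta_j(\lambda)\to 0$ and $\theta$ is bounded. If anything deserves attention, it is merely the conceptual step of identifying the rescaled counting measure $2\eta_j(\lambda)\sum$ as exactly the Riemann sum corresponding to the grid spacing inherent in $g_{j,b_j}(\lambda)$ — but this matches precisely the renormalization built into the definition of $d\mu^\lambda_{j,b_j}$, which is the whole point of that definition.
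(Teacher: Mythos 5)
Your proof is correct and follows essentially the same route as the paper: both recognize $2\eta_j(\lambda)\sum_{n}\theta(\eta_j(\lambda)(2n+|b_j|))$ as a Riemann sum with mesh $2\eta_j(\lambda)$ converging to $\int_{\mathbb{R}_+}\theta(a)\,da$, and both dispose of the offset $|b_j|\eta_j(\lambda)\to 0$ by uniform continuity of $\theta$. Your treatment is slightly more explicit about the error bounds (modulus of continuity, drifting lower endpoint), but the argument is the same.
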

\begin{proof}
 Let $\theta \in \mathcal{C}_c(\mathbb{R}_+)$.
 By definition of $d\mu^{\lambda}_{j,b_j}$, we have
 $$\int_{\mathbb{R}_+} \theta(a_j) d\mu^{\lambda}_{j,b_j}(a_j) = 2\eta_j(\lambda) \sum_{n \in \mathbb{N}}\theta(\eta_j(\lambda)(2n+|b_j|)).$$
 Thanks to the continuity of $\theta$ and the obvious fact that $\eta_j(\lambda)|b_j| \to 0$ as $\lambda \to \lambda_0$, we have
  $$\int_{\mathbb{R}_+} \theta(a_j) d\mu^{\lambda}_{j,b_j}(a_j) = 2\eta_j(\lambda) \sum_{n \in \mathbb{N}}\theta(2\eta_j(\lambda)n) + o(1).$$
Since $\theta$ is continuous and compactly supported, the above sum is nothing else than a Riemann sum.
Hence, as $\lambda \to \lambda_0$,
$$2\eta_j(\lambda) \sum_{n \in \mathbb{N}}\theta(2\eta_j(\lambda)n) \longrightarrow \int_{\mathbb{R}_+} \theta(a_j)da_j = \langle d\mu^{\lambda_0}_{j,b_j},\theta\rangle. $$
\end{proof}

 \section{A study of the Fourier kernel.}
\label{sec:StudyFourierKernel}
Let 
$$\Theta : (\hat{w},w) \mapsto e^{i\langle \lambda,s\rangle} \mathcal{W}(\hat{w},x,y). $$
In this section we study closely the properties of the Fourier kernel $\Theta$. 
We begin by proving some identities linking its regularity in the spatial variables with its decay in the Fourier variables.
These identities are the justification of the 'regularity implies decay' motto, common in (commutative) Fourier analysis.
Since the computations performed in this section rely on properties of the (rescaled) Hermite functions, we temporarily parametrize $\widetilde{g}$ by $\widetilde{g}_E$.
Explicitly, we let, for $(a,b,\lambda) \in \widetilde{g}$,
$$n := \frac{\eta(\lambda)\cdot a - b}{2} $$
and
$$m := \frac{\eta(\lambda)\cdot a + b}{2}. $$
\subsection{Regularity and decay of $\Theta$}
Applying the vector fields $X_j$ and $Y_j$ to $\Theta$, we get
$$X_j(\Theta)(\hat{w},w) = e^{i\langle \lambda,s\rangle} \left(\partial_{x_j}\mathcal{W} + \frac 12 i \eta_j(\lambda) y_j\mathcal{W}\right)(\hat{w},x,y) $$
and
$$Y_j(\Theta)(\hat{w},w) = e^{i\langle \lambda,s\rangle} \left(\partial_{y_j}\mathcal{W} - \frac 12 i \eta_j(\lambda) x_j\mathcal{W}\right)(\hat{w},x,y). $$
After some computations, we find that 
 $$\left(\partial_{x_j}\mathcal{W} + \frac 12 i \eta_j(\lambda) y_j\mathcal{W}\right)(\hat{w},x,y) \\
= - \int_{\mathbb{R}^d} e^{i \langle \eta(\lambda) \cdot \xi, y\rangle} H_{m,\eta(\lambda)}
\left( \xi + \frac x2\right)  \partial_{\xi_j}\left( H_{n,\eta(\lambda)}\left(\xi - \frac x2\right) \right) d\xi.$$
Similarly,
\begin{multline*}
  \left(\partial_{y_j}\mathcal{W} - \frac 12 i \eta_j(\lambda) x_j\mathcal{W}\right)(\hat{w},x,y) \\
= i \eta_j(\lambda) \int_{\mathbb{R}^d} e^{i \langle \eta(\lambda) \cdot \xi, y\rangle} H_{m,\eta(\lambda)}
\left( \xi + \frac x2\right)  \left(\xi_j - \frac 12 x_j\right)\left( H_{n,\eta(\lambda)}\left(\xi - \frac x2\right) \right) d\xi. 
\end{multline*}
In particular, owing to Equation $\eqref{RescaledHarmonicOscillator}$, we have
\begin{equation}
 (X_j^2 + Y_j^2)(\Theta)(\hat{w},w) = (-2n_j + 1) \eta_j(\lambda)\Theta(\hat{w},w).
 \label{DecayLaplacien}
\end{equation}
Arguing similarly for the right-invariant vector fields, we readily get 
$$(\widetilde{X}_j^2 + \widetilde{Y}_j^2)(\Theta)(\hat{w},w) = (-2m_j + 1) \eta_j(\lambda)\Theta(\hat{w},w).$$
Subtracting the two lines gives
$$(X_j^2 + Y_j^2 - \widetilde{X}_j^2 - \widetilde{Y}_j^2)(\Theta)(\hat{w},w) = 2 \eta_j(\lambda) (m_j - n_j)\Theta(\hat{w},w).$$
On the other hand, direct computations give 
$$X_j^2 - \widetilde{X}_j^2 = - 2 \sum_k (\mathcal{U}_k \cdot Z)_j \partial_{x_j} \partial_{s_k}, $$
whence
$$(X_j^2 - \widetilde{X}_j^2)(\Theta)(\hat{w},w)
= - 2 i (\mathcal{U}^{(\lambda)} \cdot Z)_j \partial_{x_j} \Theta (\hat{w},w)
= -2 i \eta_j(\lambda) y_j \partial_{x_j} \Theta (\hat{w},w).$$
Similarly,
$$(Y_j^2 - \widetilde{Y}_j^2)(\Theta)(\hat{w},w)
= 2 i \eta_j(\lambda) x_j \partial_{y_j} \Theta(\hat{w},w).$$
Denoting $\mathcal{T}_j := x_j \partial_{y_j} - y_j \partial_{x_j}$, we have shown the equality
$$2 \eta_j(\lambda) (m_j - n_j)\Theta(\hat{w},w)
= 2i \eta_j(\lambda) \mathcal{T}_j(\Theta)(\hat{w},w),$$
which becomes
\begin{equation}
 (m_j - n_j) \mathcal{W}(\hat{w},w)
= i \mathcal{T}_j(\mathcal{W})(\hat{w},w).
\label{DecayEnk}
\end{equation}
Finally, it is clear that
\begin{equation}
 \nabla_s (\Theta)(\hat{w},w) = i\lambda \Theta(\hat{w},w).
 \label{DecayLambda}
\end{equation}
Equations $\eqref{DecayLaplacien}, \eqref{DecayEnk}$ and $\eqref{DecayLambda}$ justify the choice of the distance $\rho_E$ on $\widetilde{g}_E$.
Together, they account for all decay aspects of $\Theta$ in the variable $\hat{w}$.

\subsection{Continuous extension of $\mathcal{W}$ to $\hat{g}$}

To study the continuity of $\mathcal{W}$, first write 
$$\mathcal{W}(\hat{w},x,y) = e^{\frac i2 \langle \eta(\lambda)\cdot  x, y\rangle} \widetilde{\mathcal{W}}(\hat{w},x,y).$$
The new function $\widetilde{\mathcal{W}}$ is a tensor product, as $\mathcal{W}$ was.
Denoting 
$$\widetilde{\mathcal{W}}_j(a_j,b_j,\lambda,x_j,y_j) := \int_{\mathbb{R}} e^{i \eta_j(\lambda) \xi_j y_j} H_{m_j,\eta_j(\lambda)}(\xi_j + x_j)H_{n_j,\eta_j(\lambda)}(\xi_j) d\xi_j.$$ 
we only have to exhibit a continuous extension of $\widetilde{\mathcal{W}}_j$ to (bounded sets of) $\hat{g}$.
Let us begin with a series expansions for $\widetilde{\mathcal{W}}_j$.
\begin{proposition}
 For any $\lambda \in \mathbb{R}^p$, $(a_j,b_j) \in g_j(\lambda)$ and $x_j,y_j \in \mathbb{R}$, we have
 $$\widetilde{\mathcal{W}}_j(a_j,b_j,\lambda,x_j,y_j) = \sum_{\ell_1,\ell_2 = 0}^{\infty} \eta_j(\lambda)^{\frac{\ell_1 + \ell_2}{2}} \frac{(iy_j)^{\ell_1}x_j^{\ell_2}}{\ell_1!\ell_2!}
\left(M^{\ell_1}_j H_{m_j} \left| \right.  H_{n_j}^{(\ell_2)} \right)_{L^2(\mathbb{R})}.$$
\label{SeriesExpansionW}
\end{proposition}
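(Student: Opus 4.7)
The plan is to rescale so that only standard Hermite functions appear, then Taylor expand both $(x_j, y_j)$-dependent factors of the integrand, and finally justify the term-by-term integration by dominated convergence.

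Assuming first $\lambda \in \widetilde{\Lambda}$ so that $\eta_j(\lambda) > 0$, I substitute $u = \sqrt{\eta_j(\lambda)}\,\xi_j$. The scaling identity $H_{k,\eta}(\xi) = \eta^{1/4}H_k(\sqrt{\eta}\,\xi)$ recalled in Appendix~\ref{appendix:Hermite} lets the two $\eta_j(\lambda)^{1/4}$ factors absorb the Jacobian $\eta_j(\lambda)^{-1/2}$, so that
$$
\widetilde{\mathcal{W}}_j(a_j,b_j,\lambda,x_j,y_j) = \int_{\mathbb{R}} e^{i\sqrt{\eta_j(\lambda)}\,u y_j}\, H_{m_j}\bigl(u + \sqrt{\eta_j(\lambda)}\,x_j\bigr)\,H_{n_j}(u)\,du.
$$
Both factors depending on $(x_j,y_j)$ are entire: the exponential trivially, the Hermite function because it is a polynomial times a Gaussian. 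I expand $e^{i\sqrt{\eta_j(\lambda)}\,u y_j} = \sum_{\ell_1}\frac{(iy_j)^{\ell_1}}{\ell_1!}\eta_j(\lambda)^{\ell_1/2}u^{\ell_1}$ and, by Taylor's formula, $H_{m_j}(u + \sqrt{\eta_j(\lambda)}\,x_j) = \sum_{\ell_2}\frac{x_j^{\ell_2}}{\ell_2!}\eta_j(\lambda)^{\ell_2/2}H_{m_j}^{(\ell_2)}(u)$. Multiplying the two series, integrating against $H_{n_j}$, and identifying each resulting integral as an $L^2$ pairing of the form $(M_j^{\ell_1}H_{m_j}^{(\ell_2)}\,|\,H_{n_j})_{L^2(\mathbb{R})}$ produces the stated series; $\ell_2$ successive integrations by parts (followed by Leibniz and a reindexing of the double sum) then transfer the derivatives onto $H_{n_j}$ to give the symmetric form $(M_j^{\ell_1}H_{m_j}\,|\,H_{n_j}^{(\ell_2)})_{L^2(\mathbb{R})}$ written in the statement.

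The main obstacle is justifying the interchange of summation and integration. I would appeal to Fubini--Tonelli on the absolute values, dominating the derivatives via Rodrigues' formula by $|H_{m_j}^{(\ell_2)}(u)| \leq C_{m_j,\ell_2}\,e^{-u^2/4}$ (and similarly for $H_{n_j}$), then using the Gaussian moment bound $\int_{\mathbb{R}}|u|^{\ell_1}e^{-u^2/4}\,du = O\bigl((\ell_1/e)^{\ell_1/2}\bigr)$. Combined with the factorials $\ell_1!\,\ell_2!$ in the denominators and any compact range for $x_j$, $y_j$ and $\eta_j(\lambda)$, this makes the double series of $L^1$ norms absolutely convergent, so the exchange is legitimate and the sum is analytic in $(x_j,y_j)$.

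Finally, the degenerate case $\eta_j(\lambda) = 0$ is automatic: every term with $\ell_1 + \ell_2 \geq 1$ vanishes because of the prefactor $\eta_j(\lambda)^{(\ell_1+\ell_2)/2}$, so only the $\ell_1 = \ell_2 = 0$ contribution survives. This collapse is exactly the behaviour required for the continuous extension of $\widetilde{\mathcal{W}}_j$ from $\widetilde{g}$ to $\hat{g}$ that this representation is designed to support.
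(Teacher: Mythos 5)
Your skeleton is the same as the paper's: expand the oscillating exponential in powers of $y_j$, Taylor-expand the translated Hermite function in powers of $x_j$, rescale to standard Hermite functions, and justify the interchange of sum and integral. However, the justification you give for that interchange is exactly where the real work lies, and you have left it unquantified. Writing $|H_{m_j}^{(\ell_2)}(u)|\leq C_{m_j,\ell_2}e^{-u^2/4}$ proves nothing until you control how $C_{m_j,\ell_2}$ grows with $\ell_2$: if it grew like $\ell_2!$ the factorials in the denominator would be consumed and the series of $L^1$ norms would diverge. The paper's Lemma \ref{NormalConvergenceW}, resting on Lemma \ref{RecurrenceChianteHermite}, establishes the quantitative bounds $\|M^{\ell_1}H_{n_j}\|_{L^2}\leq(2n_j+2\ell_1)^{\ell_1/2}$ and $\|H_{m_j}^{(\ell_2)}\|_{L^2}\leq(2m_j+2\ell_2)^{\ell_2/2}$ (via the creation/annihilation recursion), and only then does Stirling's formula beat $(1+\ell)^{\ell/2}$ against $\ell!$. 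You need this estimate, or an equivalent one, for your Fubini step to be legitimate; bounding each term in $L^2$ and applying Cauchy--Schwarz is also cleaner than pointwise Gaussian domination.

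Two further points. First, your expansion naturally produces the pairing $\bigl(M_j^{\ell_1}H_{m_j}^{(\ell_2)}\,\big|\,H_{n_j}\bigr)$, i.e.\ the derivative on the translated factor $H_{m_j}$ and the monomial paired with $H_{n_j}$ --- which is precisely what the paper's own proof ends with. Your proposed repair by $\ell_2$ integrations by parts plus Leibniz does \emph{not} convert this term-by-term into $\bigl(M_j^{\ell_1}H_{m_j}\,\big|\,H_{n_j}^{(\ell_2)}\bigr)$: Leibniz generates cross terms $\bigl(M_j^{\ell_1-k}H_{m_j}\,\big|\,H_{n_j}^{(\ell_2-k)}\bigr)$, and indeed the two candidate series resum to $\int e^{i\sqrt{\eta_j}uy_j}H_{m_j}(u+\sqrt{\eta_j}x_j)H_{n_j}(u)\,du$ and $\int e^{i\sqrt{\eta_j}uy_j}H_{m_j}(u)H_{n_j}(u+\sqrt{\eta_j}x_j)\,du$ respectively, which are different functions. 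The displayed pairing in the statement has $n_j$ and $m_j$ in interchanged roles relative to the computation; this should be flagged as an inconsistency, not patched by parts. Second, your treatment of the degenerate case is wrong: when $\eta_j(\lambda)=0$ the indices $n_j,m_j$ are not defined (one has $a_j\in\mathbb{R}_+$ freely), and the correct boundary value is \emph{not} the single $\ell_1=\ell_2=0$ term. Rather, $n_j,m_j\to\infty$ as $\eta_j(\lambda)\to0$ with $\eta_j(\lambda)n_j\to a_j/2$, so that $\eta_j(\lambda)^{(\ell_1+\ell_2)/2}\bigl(M_j^{\ell_1}H_{m_j}\,\big|\,H_{n_j}^{(\ell_2)}\bigr)\to(a_j/4)^{(\ell_1+\ell_2)/2}F_{\ell_1,\ell_2}(b_j)$, giving a genuinely nontrivial limit series; this is the content of Proposition \ref{ContinuiteH} and is incompatible with your claim that all higher-order terms vanish.
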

Since we are interested in bounded subsets of $\widetilde{g}$, let us define, for $r > 0$,
$$\mathcal{B}_j(r) := \{(a_j,b_j,\lambda,x_j,y_j) \in g_j(\lambda)\times\{\lambda\} \times \mathbb{R}^2 \text{ s.t. } |a_j|+|b_j|^2+|\lambda|^2+|x_j|^2+|y_j|^2 \leq r^2 \}.$$
We will, in addition, need to bound locally the function $\eta_j$.
Define
$$C_{\eta_j} := \sup_{|\lambda| = 1} \eta_j(\lambda),$$
which is finite thanks to the continuity of $\eta_j$.
The homogeneity of $\eta_j$ entails for all $\lambda \in \mathbb{R}^p$ the bound
$$\eta_j(\lambda) \leq C_{\eta_j}|\lambda|. $$
The proof of Proposition \ref{SeriesExpansionW} requires the following lemma.
\begin{lemma}
 For any $r > 0$, the function
 $$(a_j,b_j,\lambda,x_j,y_j) \mapsto \sum_{\ell_1,\ell_2 = 0}^{\infty} \frac{\eta_j(\lambda)^{\frac{\ell_1+\ell_2}{2}} |y_j|^{\ell_1}|x_j|^{\ell_2}}{\ell_1!\ell_2!} 
 \|H_{m_j}^{(\ell_2)}\|_{L^2(\mathbb{R})} \|M^{\ell_1}_jH_{n_j}\|_{L^2(\mathbb{R})}$$ 
 is well-defined and the underlying series converges normally, as a function of $(a_j,b_j,\lambda,x_j,y_j)$, on $\mathcal{B}_j(r)$.
 \label{NormalConvergenceW}
\end{lemma}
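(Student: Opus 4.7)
The plan is to produce, on $\mathcal{B}_j(r)$, a bound on each term of the series that makes the general term decay faster than any geometric series, and then sum. There are two ingredients: a uniform control of $\eta_j(\lambda) m_j$ and $\eta_j(\lambda) n_j$ coming from the constraint $(a_j,b_j) \in g_j(\lambda)$, and the standard Hermite operator estimates for $\|H_{m_j}^{(\ell_2)}\|_{L^2}$ and $\|M^{\ell_1}_j H_{n_j}\|_{L^2}$.

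First, I would record the elementary bounds that hold on $\mathcal{B}_j(r)$: $|x_j|,|y_j|,|b_j|\leq r$, $a_j\leq r^2$, and $\eta_j(\lambda)\leq C_{\eta_j}|\lambda|\leq C_{\eta_j} r$ by homogeneity and the definition of $C_{\eta_j}$. Since we are parametrizing $\widetilde g$ by $\widetilde g_E$, we have $\eta_j(\lambda)>0$, so that $m_j=(\eta_j(\lambda)^{-1}a_j+b_j)/2$ and $n_j=(\eta_j(\lambda)^{-1}a_j-b_j)/2$ are nonnegative integers. The key observation is then
\[
\eta_j(\lambda)\,m_j \;=\;\tfrac12\bigl(a_j+\eta_j(\lambda)b_j\bigr)\;\leq\; \tfrac12\bigl(r^2+C_{\eta_j}r^2\bigr)\;=:\;C(r),
\]
and the same bound holds for $\eta_j(\lambda)n_j$. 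Thus $\eta_j(\lambda)$ and the Hermite indices $m_j,n_j$ can only diverge in a tightly coupled way, which is exactly what the rescaling factor $\eta_j(\lambda)^{(\ell_1+\ell_2)/2}$ in the series is designed to absorb.

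Next I would use the ladder operator identities recalled in Appendix \ref{appendix:Hermite}, writing $\partial_\xi = \tfrac{1}{\sqrt 2}(a-a^*)$ and $M_j=\tfrac{1}{\sqrt 2}(a+a^*)$ with $[a,a^*]=1$ and $aH_n=\sqrt n\,H_{n-1}$, $a^*H_n=\sqrt{n+1}\,H_{n+1}$. Expanding $\partial_\xi^{\ell}$ (respectively $M_j^{\ell}$) as a sum of $2^{\ell}$ monomials in $a,a^*$ and observing that each such monomial, applied to $H_n$, yields a scalar multiple of $H_{n+k}$ with $|k|\leq\ell$ and with scalar factor bounded by $(n+\ell+1)^{\ell/2}$, gives the uniform estimates
\[
\|H_n^{(\ell)}\|_{L^2(\mathbb R)}\;\leq\; 2^{\ell/2}(n+\ell+1)^{\ell/2},\qquad \|M_j^{\ell}H_n\|_{L^2(\mathbb R)}\;\leq\; 2^{\ell/2}(n+\ell+1)^{\ell/2}.
\]
Combining with Step 1, the prefactor in the series is bounded by
\[
\eta_j(\lambda)^{\ell/2}\|H_{m_j}^{(\ell)}\|_{L^2}\;\leq\;2^{\ell/2}\bigl(\eta_j(\lambda)(m_j+\ell+1)\bigr)^{\ell/2}\;\leq\;2^{\ell/2}\bigl(C(r)+C_{\eta_j}r(\ell+1)\bigr)^{\ell/2},
\]
and similarly for the $n_j$-side.

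Finally, each term of the series is dominated on $\mathcal{B}_j(r)$ by a quantity of the form $A(r)^{\ell_1+\ell_2}\,[(1+\ell_1)(1+\ell_2)]^{(\ell_1+\ell_2)/2}/(\ell_1!\ell_2!)$, which by Stirling's formula behaves like $[A(r)e/\sqrt{\ell_k}]^{\ell_k}$ in each factor and hence is summable, actually super-geometrically, independently of the point in $\mathcal{B}_j(r)$. This yields normal convergence and closes the proof. I expect the only slightly delicate point to be Step 1, namely recognizing that the constraint $(a_j,b_j)\in g_j(\lambda)$ is precisely what is needed to keep $\eta_j(\lambda)(m_j+\ell)$ bounded by $C(r)+O(\ell)$ uniformly; everything else is a routine Hermite computation together with Stirling.
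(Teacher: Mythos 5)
Your proposal is correct and follows essentially the same route as the paper: bound $\eta_j(\lambda)n_j$ and $\eta_j(\lambda)m_j$ on $\mathcal{B}_j(r)$ via the constraint defining $g_j(\lambda)$ together with the homogeneity bound $\eta_j(\lambda)\leq C_{\eta_j}|\lambda|$, control the Hermite norms by $(2n+2\ell)^{\ell/2}$-type estimates (the paper proves these by induction in Lemma \ref{RecurrenceChianteHermite}, you by expanding powers of the ladder operators, which is an equivalent routine computation), and conclude by Stirling. No gaps.
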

\begin{proof}
Arguing by induction (see the details in the Appendix, Lemma \ref{RecurrenceChianteHermite}), we get the bounds 
$$\|M^{\ell_1}_jH_{n_j}\|_{L^2(\mathbb{R})} \leq (2n_j + 2\ell_1)^{\frac{\ell_1}{2}} $$
and
$$\|H_{m_j}^{(\ell_2)}\|_{L^2(\mathbb{R})} \leq (2m_j + 2\ell_2)^{\frac{\ell_2}{2}}. $$
Thus, 
$$\frac{\eta_j(\lambda)^{\frac{\ell_1}{2}} |y_j|^{\ell_1}}{\ell_1!} \|M^{\ell_1}_jH_{n_j}\|_{L^2(\mathbb{R})}
\leq \frac{ r^{\ell_1}}{\ell_1!} (2r + 2C_{\eta_j}r\ell_1)^{\frac{\ell_1}{2}}
\leq \frac{(2C_{\eta_j} r^{\frac 32})^{\ell_1}}{\ell_1!} (1 + \ell_1)^{\frac{\ell_1}{2}}.$$
Similarly, 
$$\frac{\eta_j(\lambda)^{\frac{\ell_2}{2}} |x_j|^{\ell_2}}{\ell_2!} \|H_{m_j}^{(\ell_2)}\|_{L^2(\mathbb{R})}
\leq \frac{ r^{\ell_2}}{\ell_2!} (2r + 2C_{\eta_j}r\ell_2)^{\frac{\ell_2}{2}}
\leq \frac{(2C_{\eta_j} r^{\frac 32})^{\ell_2}}{\ell_2!} (1 + \ell_2)^{\frac{\ell_2}{2}}.$$
Owing to the Stirling equivalent, we have
$$\sum_{\ell_1,\ell_2 = 0}^{\infty}  \frac{(2C_{\eta_j} r^{\frac 32})^{\ell_1+\ell_2}}{\ell_1! \ell_2!} (1 + \ell_1)^{\frac{\ell_1}{2}}(1 + \ell_2)^{\frac{\ell_2}{2}} < \infty,$$ 
thereby proving the desired convergence.
\end{proof}
We now return to the proof of Proposition \ref{SeriesExpansionW}.
\begin{proof}[Proof of Proposition \ref{SeriesExpansionW}]
We begin by expanding the exponential in its power series.
We have
$$\widetilde{\mathcal{W}}_j(a_j,b_j,\lambda,x_j,y_j) = 
\int_{\mathbb{R}} \sum_{\ell_1 = 0}^{\infty} \frac{(i \eta_j(\lambda) \xi_j y_j)^{\ell_1}}{\ell_1!} H_{m_j,\eta_j(\lambda)}(\xi_j + x_j)H_{n_j,\eta_j(\lambda)}(\xi_j) d\xi_j.$$ 
Since $H_{m_j}$ is an entire function, we may expand it as 
$$H_{m_j,\eta_j(\lambda)}(\xi_j + x_j) = \sum_{\ell_2 = 0}^{\infty} \frac{(\eta_j(\lambda)^{\frac 12}x_j)^{\ell_2}}{\ell_2!} \left(H_{m_j}^{(\ell_2)}\right)_{\eta_j(\lambda)}(\xi_j) $$
and get
$$\widetilde{\mathcal{W}}_j(a_j,b_j,\lambda,x_j,y_j) 
= \int_{\mathbb{R}}  \sum_{\ell_1 = 0}^{\infty}  \sum_{\ell_2 = 0}^{\infty} \frac{\eta_j(\lambda)^{\frac{\ell_1+\ell_2}{2}} (iy_j)^{\ell_1}x_j^{\ell_2}}{\ell_1!\ell_2!} 
 \left(H_{m_j}^{(\ell_2)}\right)_{\eta_j(\lambda)}(\xi_j)(M^{\ell_1}_jH_{n_j,\eta_j(\lambda)})(\xi_j) d\xi_j. $$
Rescaling the integration variable, we also have
$$\widetilde{\mathcal{W}}_j(a_j,b_j,\lambda,x_j,y_j) 
= \int_{\mathbb{R}}  \sum_{\ell_1 = 0}^{\infty}  \sum_{\ell_2 = 0}^{\infty} \frac{\eta_j(\lambda)^{\frac{\ell_1+\ell_2}{2}} (iy_j)^{\ell_1}x_j^{\ell_2}}{\ell_1!\ell_2!} 
H_{m_j}^{(\ell_2)}(\xi_j)(M^{\ell_1}_jH_{n_j})(\xi_j) d\xi_j. $$
Thanks to the lemma, Fubini's theorem applies and allows us to exchange the integrals with the sums.
The series expansion of $\widetilde{\mathcal{W}}_j$ is now justified.
\end{proof}
For $(\ell_1,\ell_2) \in \mathbb{N}^2$ and $1 \leq j \leq d$, let 
$$\mathcal{H}_{\ell_1,\ell_2,j} := 
\left\{
\begin{array}{rrcll}
  \bigsqcup_{\lambda \in \widetilde{\Lambda}} g_j(\lambda) \times \{\lambda\}& &\longrightarrow& &\mathbb{R}\\
 (a_j,b_j,\lambda)& &\longmapsto& &\eta_j(\lambda)^{\frac{\ell_1+\ell_2}{2}}\left(M^{\ell_1} H_{m_j} \left| \right.  H_{n_j}^{(\ell_2)} \right)_{L^2(\mathbb{R})}.
\end{array}
\right.
$$
With the above series expansion for $\widetilde{\mathcal{W}}_j$ at hand, we study the $(\mathcal{H}_{\ell_1,\ell_2,j})_{\ell_1,\ell_2 \in \mathbb{N}}$.
Defining for $k _in \mathbb{Z}$
$$F_{\ell_1,\ell_2}(k) := \sum_{\ell_1'= 0}^{\ell_1}\sum_{\ell'_2 = 0}^{\ell_2} (-1)^{\ell_2 - \ell_2'} \binom{\ell_1}{\ell_1'} \binom{\ell_2}{\ell_2'}
\mathds{1}_{\{2(\ell_1' + \ell_2') = k + \ell_1 + \ell_2\}},$$
we prove the following.
\begin{proposition}
 For any $\ell_1,\ell_2 \in \mathbb{N}$, the function $\mathcal{H}_{\ell_1,\ell_2,j}$ is continuous on $\bigsqcup_{\lambda \in \widetilde{\Lambda}} g_j(\lambda) \times \{\lambda\}$.
 Moreover, given $\lambda_0 \in \eta_j^{-1}(\{0\})$, if 
 $$(a_j,b_j,\lambda) \to (a_j,b_j,\lambda_0) \in \mathbb{R}_+ \times \mathbb{Z} \times \eta_j^{-1}(\{0\}),$$
 then
 $$\mathcal{H}_{\ell_1,\ell_2,j}(a_j,b_j,\lambda) \longrightarrow  \left(\frac{a_j}{4} \right)^{\frac{\ell_1 + \ell_2}{2}} F_{\ell_1,\ell_2}(b_j).$$
 \label{ContinuiteH}
\end{proposition}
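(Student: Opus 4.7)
The plan is to split the claim into two independent parts: continuity at interior points $\lambda\in\widetilde{\Lambda}$ and the boundary asymptotics at points $\lambda_0$ with $\eta_j(\lambda_0)=0$.

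For the interior continuity, I would first note that for $\lambda\in\widetilde{\Lambda}$ the value $\eta_j(\lambda)$ is bounded below on a small neighborhood, and the quantization constraint forces $m_j,n_j$ to be the unique integers with $b_j=m_j-n_j$ and $a_j=(2n_j+|b_j|)\eta_j(\lambda)$. Because $(n_j,b_j)\in\mathbb{N}\times\mathbb{Z}$ and $\eta_j$ is continuous, a sequence in $\widetilde{g}$ converging to $(a_j,b_j,\lambda)$ must eventually have constant $(n_j(q),b_j(q))=(n_j,b_j)$. Thus locally $\mathcal{H}_{\ell_1,\ell_2,j}$ equals the continuous function $\eta_j(\cdot)^{(\ell_1+\ell_2)/2}$ times a fixed scalar matrix coefficient of Hermite functions, whence continuity.

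For the limit, the plan is to expand $M^{\ell_1}H_{m_j}$ and $H_{n_j}^{(\ell_2)}$ using the ladder identities
$$MH_k=\sqrt{(k+1)/2}\,H_{k+1}+\sqrt{k/2}\,H_{k-1},\qquad \partial H_k=\sqrt{k/2}\,H_{k-1}-\sqrt{(k+1)/2}\,H_{k+1},$$
which iterate to a sum over sequences of up/down steps: $\ell_1'$ down-steps among $\ell_1$ in the first expansion, $\ell_2'$ (say) down-steps among $\ell_2$ in the second, the derivative contributing a sign for each up-step. Taking the $L^2$ pairing and invoking the orthonormality of the Hermite basis collapses the double sum to those pairs of paths terminating at the same index, producing exactly the indicator $\mathds{1}_{\{2(\ell_1'+\ell_2')=b_j+\ell_1+\ell_2\}}$ (after the natural reindexation) together with the binomial counts $\binom{\ell_1}{\ell_1'}\binom{\ell_2}{\ell_2'}$ and sign $(-1)^{\ell_2-\ell_2'}$, i.e.\ the quantity $F_{\ell_1,\ell_2}(b_j)$. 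In the limit $(a_j,b_j,\lambda)\to(a_j,b_j,\lambda_0)$ with $\eta_j(\lambda)\to 0$, one has $m_j-n_j=b_j$ fixed and $m_j,n_j\sim a_j/(2\eta_j(\lambda))\to\infty$; each product of $\ell_1$ (resp.\ $\ell_2$) square-root factors is of the form $(m_j/2)^{\ell_1/2}(1+O(1/m_j))$ (resp.\ $(n_j/2)^{\ell_2/2}(1+O(1/n_j))$), and multiplying by the prefactor $\eta_j(\lambda)^{(\ell_1+\ell_2)/2}$ converts $(m_j n_j/4)^{(\ell_1+\ell_2)/2}\cdot\eta_j(\lambda)^{(\ell_1+\ell_2)/2}$ into the announced $(a_j/4)^{(\ell_1+\ell_2)/2}$.

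The main technical obstacle is the combinatorial bookkeeping in the ladder expansion: for a fixed number $\ell_1'$ of down-steps, the $\binom{\ell_1}{\ell_1'}$ distinct step-orderings produce slightly different products of square roots $\sqrt{(m_j+c_k)/2}$ with $|c_k|\leq\ell_1$, so showing that every such product admits the uniform asymptotic $(m_j/2)^{\ell_1/2}(1+O(\ell_1^2/m_j))$ requires a pathwise estimate. A second subtlety is the degenerate subcase $a_j=0$: there the approximating sequence must be built with $n_j(q)\to\infty$ in such a way that $\eta_j(\lambda(q))n_j(q)\to 0$, and one must check that the matrix element then decays fast enough to match the vanishing right-hand side $(a_j/4)^{(\ell_1+\ell_2)/2}=0$, which follows from the same uniform estimate since the power $\eta_j(\lambda)^{(\ell_1+\ell_2)/2}$ still dominates.
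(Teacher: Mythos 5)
Your proposal is correct and follows essentially the same route as the paper: interior continuity via local constancy of $(n_j,b_j)$ forced by the quantization constraint, and the boundary limit via the ladder identities for $M$ and $\partial$, orthonormality of the Hermite basis, and the asymptotics $\eta_j(\lambda)m_j,\eta_j(\lambda)n_j\to a_j/2$. The only organizational difference is that the paper packages your ``pathwise'' expansion as an induction on the number of steps (Lemma~\ref{LemmeCommutationAsymptotique}), where Pascal's rule and an $L^2$-norm error estimate absorb exactly the uniform control over step orderings that you identify as the main technical obstacle.
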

In order to handle points close to the zero set of $\eta_j$, we will need the following lemma. 
We refer the reader to Appendix \ref{appendix:Hermite} for the definition of the creation and annihilation operators $A_j$ and $C_j$.
\begin{lemma}
Given $\ell \in \mathbb{N}$, assume that $\eta_j(\lambda) \to 0$ and $\eta_j(\lambda)n_j \to \frac{a_j}{2} \in \mathbb{R}_+$. Then
 $$\left\|\eta_j(\lambda)^{\frac{\ell}{2}} \left(\frac{A_j \pm C_j}{2}\right)^{\ell}H_{n_j} -
 \left(\frac{a_j}{4}\right)^{\frac{\ell}{2}} \sum_{\ell' = 0}^{\ell} (\pm1)^{\ell'}\binom{\ell}{\ell'}H_{n_j + 2\ell'- \ell}\right\|_{L^2(\mathbb{R})}
 \longrightarrow 0.$$
 \label{LemmeCommutationAsymptotique}
\end{lemma}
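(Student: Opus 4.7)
The plan is to expand $(A_j \pm C_j)^\ell$ into its $2^\ell$ non-commutative monomial contributions, evaluate each monomial on $H_{n_j}$ via the Hermite ladder relations (cf.~Appendix \ref{appendix:Hermite}), and verify that every contribution concentrates on the same scalar limit in the prescribed regime. Explicitly, I would write $(A_j \pm C_j)^\ell = \sum_{w} w$ as a sum over ordered words $w$ of length $\ell$ in the alphabet $\{A_j, \pm C_j\}$. Each word $w$ containing exactly $\ell'$ letters of type $\pm C_j$ carries a sign $(\pm 1)^{\ell'}$. Because $A_j$ decreases the Hermite index by one and $C_j$ increases it by one, every such word sends $H_{n_j}$ to a scalar multiple of $H_{n_j + 2\ell' - \ell}$: the net shift depends only on $\ell'$, not on the ordering. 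There are $\binom{\ell}{\ell'}$ words with exactly $\ell'$ occurrences of $\pm C_j$.

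Next, I would determine the scalar coefficient $\alpha_w(n_j)$ of $H_{n_j + 2\ell' - \ell}$ produced by $w$. By the ladder relations, $\alpha_w(n_j)$ is a product of $\ell$ square-root factors, each proportional to $\sqrt{n_j + \epsilon_k^w}$ for integer shifts $|\epsilon_k^w| \leq \ell$ depending on the ordering. Multiplying by $\eta_j(\lambda)^{\ell/2}$ distributes one $\eta_j(\lambda)^{1/2}$ inside each square root, giving factors $\sqrt{\eta_j(\lambda)(n_j + \epsilon_k^w)}$. Since $\eta_j(\lambda)\epsilon_k^w \to 0$ and $\eta_j(\lambda) n_j \to a_j/2$, each such factor converges to $\sqrt{a_j/2}$, so the product tends to $(a_j/2)^{\ell/2}$, independently of $w$. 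Combining the $2^{-\ell}$ factor from $((A_j \pm C_j)/2)^\ell$ with the normalization constants of $A_j, C_j$ from Appendix \ref{appendix:Hermite} yields exactly $(a_j/4)^{\ell/2}$.

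I would then assemble the contributions: the coefficient of $H_{n_j + 2\ell' - \ell}$ in $\eta_j(\lambda)^{\ell/2}\bigl((A_j \pm C_j)/2\bigr)^\ell H_{n_j}$ equals $(\pm 1)^{\ell'} \binom{\ell}{\ell'} (a_j/4)^{\ell/2} + o(1)$. When $a_j > 0$ the hypothesis $\eta_j(\lambda) n_j \to a_j/2$ with $\eta_j(\lambda) \to 0$ forces $n_j \to \infty$, so for $\eta_j(\lambda)$ small enough the shifted indices $n_j + 2\ell' - \ell$ are distinct and nonnegative, and the corresponding Hermite functions are orthonormal; when $a_j = 0$ both sides vanish in the limit, since the whole left-hand side is bounded in norm by $\eta_j(\lambda)^{\ell/2} n_j^{\ell/2} = (\eta_j(\lambda) n_j)^{\ell/2} \to 0$. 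By Plancherel, the squared $L^2$-norm of the difference reduces to the sum of squared coefficient discrepancies, each tending to zero.

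The main obstacle is a clean treatment of the non-commutativity of $A_j$ and $C_j$, since different orderings inside each word produce a priori different scalar prefactors. The saving observation is that the word-dependent shifts $\epsilon_k^w$ are bounded by $\ell$ and become negligible once multiplied by $\eta_j(\lambda)$: in this semiclassical-type limit all $2^\ell$ monomials acquire the same common asymptotic value, and the non-commutative corrections (of relative size $1/n_j$) are subdominant.
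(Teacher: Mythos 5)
Your argument is correct, and it reaches the conclusion by a genuinely different decomposition than the paper. The paper proceeds by induction on $\ell$: it sets $\mathcal{R}_{\ell,n_j,1}^{\pm}=\eta_j(\lambda)^{\ell/2}(A_j\pm C_j)^{\ell}H_{n_j}$ and $\mathcal{R}_{\ell,n_j,2}^{\pm}=a_j^{\ell/2}\sum_{\ell'}(\pm1)^{\ell'}\binom{\ell}{\ell'}H_{n_j+2\ell'-\ell}$, shows via the ladder relations \eqref{CreationHermite}--\eqref{AnnihilationHermite} that $\sqrt{\eta_j(\lambda)}(A_j\pm C_j)$ maps the second family to its successor up to an $o(1)$ error (the binomial coefficients emerging from an index shift and Pascal's rule), and propagates the error through a triangle-inequality estimate. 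You instead expand $(A_j\pm C_j)^{\ell}$ into its $2^{\ell}$ ordered words, observe that a word with $\ell'$ creation letters sends $H_{n_j}$ to a multiple of $H_{n_j+2\ell'-\ell}$ regardless of ordering, and show that after multiplication by $\eta_j(\lambda)^{\ell/2}$ every word's scalar converges to the same limit because the ordering-dependent shifts $\epsilon_k^w=O(\ell)$ are killed by $\eta_j(\lambda)\to 0$; the binomial coefficient then arises by counting words. Your route confronts the noncommutativity head-on and dispatches it with the correct observation that the corrections are of relative size $O(1/n_j)$, whereas the paper's induction sidesteps the expansion entirely and only ever tracks one application of the operator at a time; both rest on the same asymptotic $\sqrt{2\eta_j(\lambda)(n_j+O(1))}=\sqrt{a_j}+o(1)$ and on orthonormality of the Hermite family to convert coefficient convergence into $L^2$ convergence. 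Two cosmetic points: your crude bound in the case $a_j=0$ should read $(2\eta_j(\lambda)(n_j+\ell))^{\ell/2}$ rather than $(\eta_j(\lambda)n_j)^{\ell/2}$ (same conclusion), and it only applies for $\ell\geq 1$, the case $\ell=0$ being an exact identity.
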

\begin{proof}
 Let us define 
 $$\mathcal{R}_{\ell,n_j,1}^{\pm} := \eta_j(\lambda)^{\frac{\ell}{2}} (A_j \pm C_j)^{\ell}H_{n_j}, $$
 $$\mathcal{R}_{\ell,n_j,2}^{\pm} :=  a_j^{\frac{\ell}{2}} \sum_{\ell' = 0}^{\ell} (\pm1)^{\ell'}\binom{\ell}{\ell'}H_{n_j + 2\ell'- \ell}. $$
 We obviously have $\mathcal{R}_{0,n_j,1}^{\pm} = \mathcal{R}_{0,n_j,2}^{\pm} = H_{n_j}$ and thus, $\|\mathcal{R}_{0,n_j,1}^{\pm} - \mathcal{R}_{0,n_j,2}^{\pm}\|_{L^2(\mathbb{R})} = 0$.
 By definition, we also have 
 $$\sqrt{\eta_j(\lambda)} (A\pm C)\mathcal{R}_{\ell,n_j,1}^{\pm} = \mathcal{R}_{\ell+1,n_j,1}^{\pm}.$$
 It remains to study the effect of $\sqrt{\eta_j(\lambda)} (A_j\pm C_j)$ on $\mathcal{R}_{\ell,n_j,2}^{\pm}$ to conclude.
 Recalling Equations $\eqref{CreationHermite}$ and $\eqref{AnnihilationHermite}$ in the Appendix along with our assumptions on $\eta_j(\lambda)$ and $n_j$, we have 
 $$\sqrt{\eta_j(\lambda)} A_jH_{n_j + 2\ell' - \ell} = 
 \sqrt{\eta_j(\lambda)} \left(\sqrt{2(n_j+2\ell'-\ell)}H_{n_j + 2\ell'-\ell-1} \right) = (\sqrt{a_j}+o(1))H_{n_j + 2\ell'-\ell-1} $$
 and
  $$\sqrt{\eta_j(\lambda)} C_jH_{n_j + 2\ell' - \ell} = 
 \sqrt{\eta_j(\lambda)} \left(\sqrt{2(n_j + 2\ell'-\ell+1)}H_{n_j+2\ell'-\ell+1} \right) = (\sqrt{a_j}+o(1))H_{n_j + 2\ell'-\ell+1}.$$
 In particular, we have
 \begin{multline}
  \left\|\sqrt{\eta_j(\lambda)} (A_j\pm C_j) (\mathcal{R}_{\ell,n_j,1}^{\pm}-\mathcal{R}_{\ell,n_j,2}^{\pm}) \right\|_{L^2(\mathbb{R})} \\
  \leq (\sqrt{a_j}+o(1)) \left(\|\mathcal{R}_{\ell,n_j-1,1}^{\pm} -\mathcal{R}_{\ell,n_j-1,2}^{\pm}\|_{L^2(\mathbb{R})}
  +\|\mathcal{R}_{\ell,n_j+1,1}^{\pm} -\mathcal{R}_{\ell,n_j+1,2}^{\pm}\|_{L^2(\mathbb{R})}\right).
  \label{BornitudeReste}
 \end{multline}
 Hence,
 $$\sqrt{\eta_j(\lambda)}(A_j\pm C_j)\mathcal{R}_{\ell,2}^{\pm} = 
 a_j^{\frac{\ell+1}{2}} \sum_{\ell' = 0}^{\ell} (\pm 1)^{\ell'} \binom{\ell}{\ell'} (H_{n_j + 2\ell' - \ell-1} \pm H_{n_j + 2\ell' - \ell + 1}) + o(1)_{L^(\mathbb{R})}.$$
 Shifting the index of summation in the second sum and using Pascal's rule gives
 $$\sum_{\ell' = 0}^{\ell} (\pm 1)^{\ell'} \binom{\ell}{\ell'} (H_{n_j + 2\ell' - \ell-1} \pm H_{n_j + 2\ell' - \ell + 1})
 = \sum_{\ell' = 0}^{\ell+1} (\pm 1)^{\ell'} \binom{\ell+1}{\ell'} H_{n_j + 2\ell' - \ell-1},$$
 showing, as desired, that 
 $$\sqrt{\eta_j(\lambda)}(A_j\pm C_j)\mathcal{R}_{\ell,n_j,2}^{\pm} = \mathcal{R}_{\ell+1,n_j,2}^{\pm} + o(1)_{L^2(\mathbb{R})}.$$
 Arguing by induction on $\ell$ and using $\eqref{BornitudeReste}$, the lemma is proved.
\end{proof}
\begin{proof}[Proof of Proposition \ref{ContinuiteH}]
 The continuity of $\mathcal{H}_{\ell_1,\ell_2,j}$ on $\bigsqcup_{\lambda \in \widetilde{\Lambda}} g_j(\lambda) \times \{\lambda\}$ is easily established.
Indeed, if $(a_j',b_j',\lambda')$ is sufficiently close to $(a_j,b_j,\lambda)$ (depending on the values of $a_j, b_j, \eta_j(\lambda)$), the fact that 
$a_j$ and $b_j$ are integers entails $b_j' = b_j$ and $a_j' = a_j$.
The continuity of $\mathcal{H}_{\ell_1,\ell_2,j}$ at the point $(a_j,b_j,\lambda)$ follows from that of $\eta_j$ on $\widetilde{\Lambda}$.
We now turn to points belonging to the boundary of $\bigsqcup_{\lambda \in \widetilde{\Lambda}} g_j(\lambda) \times \{\lambda\}$.
As a corollary of Lemma \ref{LemmeCommutationAsymptotique}, for any $1 \leq j \leq d$ and $(\ell_1,\ell_2,b_j) \in \mathbb{N}^2 \times \mathbb{Z}$, we have 
\begin{multline*}
 \mathcal{H}_{\ell_1,\ell_2,j}(a_j,b_j,\lambda) = \left(\frac{a_j}{4}\right)^{\frac{\ell_1+\ell_2}{2}} \times\\
\sum_{\ell_1'=0}^{\ell_1} \sum_{\ell_2'=0}^{\ell_2} (-1)^{\ell_2'}\binom{\ell_1}{\ell_1'} \binom{\ell_2}{\ell_2'} 
\left(H_{n_j+\ell_1-2\ell_1'} \left| \right. H_{n_j+b_j+\ell_2-2\ell_2'} \right)_{L^2(\mathbb{R})} + o(1)
\end{multline*}
as $\eta_j(\lambda) \to 0$ with $\eta_j(\lambda) n_j \to \frac{a_j}{2} \in \mathbb{R}_+$. 
Performing the change of index $\ell_2' \leftarrow \ell_2 - \ell_2'$ and recalling that the Hermite functions form an orthonormal family of $L^2(\mathbb{R})$, we get
$$\mathcal{H}_{\ell_1,\ell_2,j}(a_j,b_j,\lambda) \longrightarrow  \left(\frac{a_j}{4} \right)^{\frac{\ell_1 + \ell_2}{2}} F_{\ell_1,\ell_2}(b_j)$$
as $\eta_j(\lambda) \to 0$ with $\eta_j(\lambda) n_j \to \frac{a_j}{2} \in \mathbb{R}_+$. 
\end{proof}
An immediate corollary of Proposition \ref{ContinuiteH} and Lemma \ref{NormalConvergenceW} is that, as
 $$(a_j,b_j,\lambda) \to (a_j,b_j,\lambda_0) \in g_j(\lambda) \times \eta_j^{-1}(\{0\}),$$
 we have
$$\widetilde{\mathcal{W}}_j(a_j,b_j,\lambda,x_j,y_j) \longrightarrow \sum_{\ell_1,\ell_2=0}^{\infty} \left(\frac{a_j}{4}\right)^{\frac{\ell_1+\ell_2}{2}} 
F_{\ell_1,\ell_2}(b_j) \frac{(iy_j)^{\ell_1}x_j^{\ell_2}}{\ell_1!\ell_2!}. $$
\section{The case of functions independant of the central variable.} 
\label{sec:IndependantCentralVariable}
The goal of this section is to define properly what the Fourier transform of a function independant of the central variable should be.
We begin by a convergence lemma for functions integrated against approximate Dirac masses around some boundary point $\lambda \in \Lambda_0$. 
 \begin{lemma}
  Let $\chi : \mathbb{R}^p \to \mathbb{R}^p$ be compactly supported and integrable, with 
  $$\int_{\mathbb{R}^p} \chi(\lambda) d\lambda = 1. $$
  Let $\lambda_0 \in \mathbb{R}^p$. 
  Let $\theta$ be in $\mathcal{C}_c(\hat{g}).$
  Then, as $\varepsilon \to 0$,
  $$I_{\varepsilon} := \int_{\mathbb{R}^n} \frac{1}{\varepsilon^p} \chi\left(\frac{\lambda-\lambda_0}{\varepsilon}\right) \theta(\hat{w})d\hat{w} \longrightarrow 
  \langle d\mu^{\lambda_0}, \theta(\cdot,\cdot,\lambda_0) \rangle. $$
  \label{LemmeDiracEnLambda} 
 \end{lemma}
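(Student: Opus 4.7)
The plan is to use the disintegration of $d\hat{w}$ along the $\lambda$--fibers to reduce the statement to a standard Dirac--mass approximation on $\mathbb{R}^{p}$. Setting
\[ \Phi(\lambda) := \int_{g(\lambda)} \theta(a,b,\lambda)\, d\mu^{\lambda}(a,b), \]
the definition of $d\hat{w}$ gives
\[ I_{\varepsilon} = \int_{\mathbb{R}^{p}} \frac{1}{\varepsilon^{p}}\, \chi\!\left(\frac{\lambda-\lambda_{0}}{\varepsilon}\right) \Phi(\lambda)\, d\lambda, \]
and the change of variable $\mu = (\lambda-\lambda_{0})/\varepsilon$ turns this into $\int_{\mathbb{R}^{p}} \chi(\mu)\, \Phi(\lambda_{0}+\varepsilon\mu)\, d\mu$. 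The conclusion will then follow from Lebesgue's dominated convergence theorem, provided (i) $\Phi$ is continuous at $\lambda_{0}$ and (ii) $\Phi$ is locally bounded near $\lambda_{0}$, so that $|\chi(\mu)\Phi(\lambda_{0}+\varepsilon\mu)| \le |\chi(\mu)|\,\|\Phi\|_{L^{\infty}(V)}$ for some neighborhood $V$ of $\lambda_{0}$.

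For the continuity of $\Phi$ at $\lambda_{0}$, I would write the two--term decomposition
\[ \Phi(\lambda)-\Phi(\lambda_{0}) = \int \bigl(\theta(\cdot,\cdot,\lambda)-\theta(\cdot,\cdot,\lambda_{0})\bigr)\, d\mu^{\lambda} + \left(\int \theta(\cdot,\cdot,\lambda_{0})\, d\mu^{\lambda} - \int \theta(\cdot,\cdot,\lambda_{0})\, d\mu^{\lambda_{0}}\right). \]
The second parenthesis tends to $0$ as $\lambda\to\lambda_{0}$ by the weak--$*$ continuity of $\lambda\mapsto d\mu^{\lambda}$ proved in the previous proposition, since $\theta(\cdot,\cdot,\lambda_{0})$ lies in $\mathcal{C}_{c}(g(\lambda_{0}))$. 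For the first integral, compactness of $\operatorname{supp}\theta\subset\hat{g}$ gives uniform continuity of $\theta$, so that $\theta(\cdot,\cdot,\lambda)-\theta(\cdot,\cdot,\lambda_{0})$ tends to $0$ uniformly; it then suffices to check that the total mass $d\mu^{\lambda}(K_{\lambda})$ of the slice $K_{\lambda} := \{(a,b) : (a,b,\lambda)\in \operatorname{supp}\theta\}$ is bounded uniformly in $\lambda$ near $\lambda_{0}$.

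This uniform mass bound is the main technical point. For each coordinate $j$, the points of $g_{j,b_{j}}(\lambda)$ in an interval $[0,R]$ number $O(R/\eta_{j}(\lambda))$ when $\eta_{j}(\lambda)>0$, and the counting weight is precisely $2\eta_{j}(\lambda)$; the product is $O(R)$ independently of how small $\eta_{j}(\lambda)$ becomes, and it matches the Lebesgue mass $2R$ arising in the degenerate limit $\eta_{j}(\lambda_{0})=0$. Since the $b_{j}$--component is confined to a finite range by the boundedness of $\operatorname{supp}\theta$, multiplying these one--dimensional bounds over $1\le j\le d$ yields a uniform constant $C(\operatorname{supp}\theta)$ majorizing $d\mu^{\lambda}(K_{\lambda})$ on a whole neighborhood of $\lambda_{0}$. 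This both closes the continuity argument and provides the dominating function for the dominated convergence step, which then gives $I_{\varepsilon}\to\Phi(\lambda_{0})\int\chi = \langle d\mu^{\lambda_{0}},\theta(\cdot,\cdot,\lambda_{0})\rangle$.
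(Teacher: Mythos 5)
Your proof is correct and follows essentially the same route as the paper: change of variables, the splitting into a uniform-continuity term for $\theta$ and a weak-$*$ continuity term for $\lambda\mapsto d\mu^{\lambda}$, then dominated convergence. The only difference is that you make explicit the uniform bound $\sup_{\lambda}d\mu^{\lambda}(K_{\lambda})<\infty$ on the mass of the support slices (each comb in $[0,R]$ has $O(R/\eta_j(\lambda))$ points of weight $2\eta_j(\lambda)$), a point the paper leaves implicit when it invokes compactness of the supports to justify the limit.
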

 \begin{proof}
  With an obvious change of variable, we have 
  $$I_{\varepsilon} = 
  \int_{\mathbb{R}^p} \chi(\lambda) \left(\int_{g(\lambda_0 + \varepsilon \lambda)} \theta(a,b,\lambda_0 + \varepsilon \lambda) 
  d\mu^{\lambda_0 + \varepsilon \lambda}(a,b) \right) d \lambda. $$
  Since $\theta$ is continuous and compactly supported (hence, uniformly continuous), we have
$$\|\theta(\cdot,\cdot,\lambda_0 + \varepsilon\cdot) - \theta(\cdot,\cdot,\lambda_0)\|_{L^{\infty}(\hat{g})} \longrightarrow 0 $$
as $\varepsilon \to 0$.
Therefore, from the weak-$\ast$ continuity of the map $\lambda \mapsto d\mu^{\lambda}$, for any $\lambda \in \mathbb{R}^p$ there holds
$$\int_{g(\lambda_0 + \varepsilon \lambda)} \theta(a,b,\lambda_0 + \varepsilon \lambda) d\mu^{\lambda_0 + \varepsilon\lambda}(a,b) \longrightarrow 
\int_{g(\lambda_0)} \theta(a,b,\lambda_0)d\mu^{\lambda_0}(a,b) $$
as $\varepsilon \to 0$.
 Thanks to the compactness of the supports of both $\theta$ and $\chi$, we may apply the dominated convergence theorem to get that, as $\varepsilon \to 0$,
 $$I_{\varepsilon} \longrightarrow 
 \int_{\mathbb{R}^p}\chi(\lambda)\left(\int_{g(\lambda_0)} \theta(a,b,\lambda_0) d\mu^{\lambda_0}(a,b) \right) d \lambda 
 = \int_{g(\lambda_0)} \theta(a,b,\lambda_0) d\mu^{\lambda_0}(a,b). $$
 \end{proof}
 \begin{remark}
  The conclusion of the above theorem remains true if one replaces the compactness of the support of $\theta$ by the assumptions 
  $$\sup_{\lambda \in \mathbb{R}^p} \int_{g(\lambda)} |\theta(a,b,\lambda)| d\mu^{\lambda}(a,b) < \infty $$
  and
  $$\limsup_{R \to \infty} \sup_{\lambda \in \mathbb{R}^p} \int_{g(\lambda)} |\theta(a,b,\lambda)|\mathds{1}_{\{|a|^2 + |b|^2 \geq R^2\}} d\mu^{\lambda}(a,b) = 0. $$
  An example of admissible function is given for $\alpha > d$ by
  $$\theta_{\alpha} : (a,b,\lambda) \mapsto (1+|a|^2 + |b|^2)^{-\alpha}. $$
 \end{remark}
For $f\in L^1(\mathbb{R}^{2d})$ and $(a,b,\lambda) \in \hat{g}$, we define
$$\mathcal{G}_g^{\lambda}(f)(a,b) = \int_{\mathbb{R}^{2d}} \overline{\mathcal{W}}(a,b,\lambda,Z)f(Z) dZ. $$
\begin{theorem}
Let $\lambda_0 \in \Lambda_0$. 
Let $\chi\in \mathcal{S}(\mathbb{R}^p)$ be such that $\chi(0) = 1$ and $\mathcal{F}_{\mathbb{R}^p}(\chi)$ is compactly supported.
Then, for any $f\in L^1(\mathbb{R}^{2d})$, we have
\begin{equation}
 \mathcal{F}_g(f \otimes e^{i \langle \lambda_0, \cdot\rangle} \chi(\varepsilon \cdot)) \rightharpoonup^* (2\pi)^p \mathcal{G}_g^{\lambda_0}(f) d\mu^{\lambda_0}
 \label{EquationConvergenceSansVariableVerticale}
\end{equation}
as $\varepsilon \to 0$, in the weak sense of measures.
 \label{ConvergenceSansVariableVerticale}
\end{theorem}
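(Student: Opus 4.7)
The plan is to reduce the statement to Lemma \ref{LemmeDiracEnLambda} by computing $\mathcal{F}_g(F_\varepsilon)$ explicitly, where $F_\varepsilon(Z,s):=f(Z)\,e^{i\langle\lambda_0,s\rangle}\chi(\varepsilon s)$. Since the Fourier kernel $\overline{e^{i\langle\lambda,s\rangle}\mathcal{W}(\hat{w},Z)}$ factorises in $(Z,s)$ and $F_\varepsilon$ is itself a tensor product, Fubini's theorem (valid because $f\in L^{1}$ and $\chi\in\mathcal{S}$) yields
$$\mathcal{F}_g(F_\varepsilon)(a,b,\lambda)=\mathcal{G}_g^{\lambda}(f)(a,b)\,\cdot\,\int_{\mathbb{R}^p}e^{-i\langle\lambda-\lambda_0,s\rangle}\chi(\varepsilon s)\,ds=\mathcal{G}_g^{\lambda}(f)(a,b)\,\cdot\,\frac{1}{\varepsilon^{p}}\widehat{\chi}\!\left(\frac{\lambda-\lambda_0}{\varepsilon}\right),$$
where $\widehat{\chi}:=\mathcal{F}_{\mathbb{R}^p}(\chi)$ is compactly supported by hypothesis and satisfies $\int_{\mathbb{R}^p}\widehat{\chi}=(2\pi)^{p}\chi(0)=(2\pi)^{p}$ by Fourier inversion at the origin.

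Testing against an arbitrary $\theta\in\mathcal{C}_c(\hat{g})$ then produces
$$\langle\mathcal{F}_g(F_\varepsilon),\theta\rangle=(2\pi)^{p}\int_{\hat{g}}\frac{1}{\varepsilon^{p}}\,\frac{\widehat{\chi}}{(2\pi)^{p}}\!\left(\frac{\lambda-\lambda_0}{\varepsilon}\right)\bigl[\theta(\hat{w})\,\mathcal{G}_g^{\lambda}(f)(a,b)\bigr]\,d\hat{w},$$
and the normalised kernel $\widehat{\chi}/(2\pi)^{p}$ satisfies exactly the hypotheses of Lemma \ref{LemmeDiracEnLambda} (compactly supported, integrable, total mass $1$). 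Provided the product $\theta\cdot\mathcal{G}_g^{\cdot}(f)$ lies in $\mathcal{C}_c(\hat{g})$, the lemma will deliver the limit $(2\pi)^{p}\langle d\mu^{\lambda_0},\theta(\cdot,\cdot,\lambda_0)\mathcal{G}_g^{\lambda_0}(f)\rangle$, which is the content of \eqref{EquationConvergenceSansVariableVerticale}.

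The only non-routine step, and hence the main obstacle, is to verify continuity of $\hat{w}\mapsto\mathcal{G}_g^{\lambda}(f)(a,b)$ on the whole of $\hat{g}$, including the singular stratum $\{\lambda_0\}\times g(\lambda_0)$ where the rank of $\mathcal{U}^{(\lambda)}$ drops. Compact support of the product comes for free from $\theta$. For continuity, I would combine two ingredients already at hand: first, the continuous extension of $\mathcal{W}$ to $\hat{g}$ furnished by the pointwise convergence established at the end of Section \ref{sec:StudyFourierKernel} (together with the fact that the prefactor $e^{\frac{i}{2}\langle\eta(\lambda)\cdot x,y\rangle}$ is continuous in $\lambda$); second, the uniform bound $|\mathcal{W}(\hat{w},Z)|\leq 1$, which follows from Cauchy--Schwarz applied to the defining integral of $\mathcal{W}$ and from the $L^{2}$-normalisation of the rescaled Hermite basis. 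Given $f\in L^{1}(\mathbb{R}^{2d})$, the dominated convergence theorem then upgrades pointwise continuity of $\mathcal{W}$ to continuity of $\mathcal{G}_g^{\lambda}(f)$ on $\hat{g}$, closing the argument.
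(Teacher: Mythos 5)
Your proposal is correct and follows essentially the same route as the paper: factor the kernel to write $\mathcal{F}_g(F_\varepsilon)$ as $\mathcal{G}_g^{\lambda}(f)$ times the rescaled $\widehat{\chi}$, note $\int\widehat{\chi}=(2\pi)^p$, and apply Lemma \ref{LemmeDiracEnLambda} to the product $G\theta$ with $G=\mathcal{G}_g^{\cdot}(f)$. In fact you supply more detail than the paper on the one point it merely asserts, namely the continuity of $\mathcal{G}_g^{\lambda}(f)$ across the singular stratum, which you justify via the continuous extension of $\mathcal{W}$, the uniform bound $|\mathcal{W}|\leq 1$, and dominated convergence.
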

 \begin{proof}
  Let $\theta \in \mathcal{C}_c(\hat{g})$.
  By definition of the Fourier transform, we have
  $$\int_{\hat{g}} \mathcal{F}_g(f\otimes e^{i\langle \lambda_0, \cdot\rangle} \chi(\varepsilon \cdot))(\hat{w}) \theta(\hat{w}) d\hat{w}
  = \int_{\mathbb{R}^p} \varepsilon^{-p} \hat{\chi}(\varepsilon^{-1}(\lambda-\lambda_0))
  \left( \int_{g(\lambda)} (G\theta)(a,b,\lambda) d\mu^{\lambda}(a,b)\right) d\lambda,$$
  where we abbreviated $\mathcal{F}_{\mathbb{R}^p}(\chi)$ into $\hat{\chi}$ for the sake of readability and defined the function $G$ by
  $$G(a,b,\lambda) := \int_{\mathbb{R}^n} f(Z) \overline{\mathcal{W}}(a,b,\lambda,Z) dZ. $$
  The assumptions on $\chi$ entail in particular that 
  $$\int_{\mathbb{R}^p}\hat{\chi}(\lambda) d\lambda = (2\pi)^p. $$
  Since $f$ belongs to $L^1(\mathbb{R}^{2d})$, $G$ is continuous and hence, the product function $G\theta$ lies in $\mathcal{C}_c(\hat{g})$.
  Applying Lemma \ref{LemmeDiracEnLambda} to $G\theta$ yields, as $\varepsilon \to 0$,
  $$\int_{\hat{g}} \mathcal{F}_g(f\otimes e^{i\langle \lambda_0, \cdot\rangle} \chi(\varepsilon \cdot))(\hat{w}) \theta(\hat{w}) d\hat{w}
  \longrightarrow (2\pi)^p \int_{g(\lambda_0)} G(a,b,\lambda_0) \theta(a,b,\lambda_0) d\mu^{\lambda_0}(a,b),$$
  It only remains to notice that, by definition, 
  $$G(a,b,\lambda_0) = \mathcal{G}_g^{\lambda_0}(f)(a,b). $$
 \end{proof}

\section{Computing the kernel at the boundary.}
\label{sec:ComputingKernelBoundary}
\subsection{Preliminary identities}
Given $(a_j,x_j,y_j,b_j) \in \mathbb{R}_+ \times \mathbb{R}^2 \times \mathbb{Z}$, we let
$$\mathcal{K}(a_j,x_j,y_j,b_j) := \sum_{\ell_1,\ell_2=0}^{\infty} \left(\frac{a_j}{4}\right)^{\frac{\ell_1+\ell_2}{2}} 
F_{\ell_1,\ell_2}(b_j) \frac{(iy_j)^{\ell_1}x_j^{\ell_2}}{\ell_1!\ell_2!}. $$
Our aim is to find a closed form for the above sum.
To  this aim, we list a few identities satisfied by this function, which will eventually help us in computing an integral form for $\mathcal{K}$.
\begin{proposition}
 For $a_j \in \mathbb{R}_+$, $x_j,y_j,x'_j,y'_j, \in \mathbb{R}$ and $b_j \in \mathbb{Z}$, there holds
 \begin{enumerate}
  \item $\mathcal{K}(0,x_j,y_j,b_j) = \delta_{0,b_j}$,
  \item $\mathcal{K}(a_j,x_j,-y_j,b_j) = \overline{\mathcal{K}(a_j,x_j,y_j,b_j)},$
  \item $\mathcal{K}(a_j,-x_j,y_j,-b_j) = \mathcal{K}(a_j,x_j,y_j,b_j),$
  \item $\mathcal{K}(a_j,x_j,-y_j,-b_j) = (-1)^{b_j}\mathcal{K}(a_j,x_j,y_j,b_j),$
  \item $-\Delta_{x_j,y_j}\mathcal{K}(a_j,x_j,y_j,b_j) = a_j\mathcal{K}(a_j,x_j,y_j,b_j),$
  \item $b_j \mathcal{K}(a_j,x_j,y_j,b_j) =  i (x_j \partial_{y_j} - y_j \partial_{x_j})\mathcal{K}(a_j,x_j,y_j,b_j),$
  \item $$\mathcal{K}(a_j,x_j+x'_j,y_j+y'_j,b_j)= \\
 \sum_{b'_j \in \mathbb{Z}} \mathcal{K}(a_j,x_j,y_j,b_j-b'_j)\mathcal{K}(a_j,x'_j,y'_j,b'_j).$$
  \item $$(|x_j|^2+|y_j|^2)\mathcal{K}(a_j,x_j,y_j,b_j) + a_j\partial_{a_ja_j}\mathcal{K}(a_j,x_j,y_j,b_j) + 2 \partial_{a_j}\mathcal{K}(a_j,x_j,y_j,b_j) = 
  \frac{b_j^2}{a_j}\mathcal{K}(a_j,x_j,y_j,b_j).$$
 \end{enumerate}
 \label{IdentitiesOnK}
\end{proposition}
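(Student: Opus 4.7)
The eight identities split naturally into three families, which I would treat using (a) inspection of the series, (b) passing to the limit $\eta_j(\lambda)\to 0$ in identities already established for $\mathcal{W}$, and (c) combinatorial identities on $F_{\ell_1,\ell_2}(b_j)$ rooted in the Vandermonde convolution.

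Identities (1) and (2) are immediate: setting $a_j=0$ kills every term except $(\ell_1,\ell_2)=(0,0)$, leaving $F_{0,0}(b_j)=\mathds{1}_{\{b_j=0\}}$; complex conjugation sends $(iy_j)^{\ell_1}$ to $(-iy_j)^{\ell_1}$ since $F_{\ell_1,\ell_2}(b_j)\in\mathbb Z$. Identities (3) and (4) rest on the symmetry
$$F_{\ell_1,\ell_2}(-b_j)=(-1)^{\ell_2}F_{\ell_1,\ell_2}(b_j),$$
obtained by the change of index $\ell'_i\mapsto \ell_i-\ell'_i$ in the defining sum (the binomials are invariant and the sign $(-1)^{\ell_2-\ell_2'}$ is converted to $(-1)^{\ell_2}(-1)^{\ell_2-\ell_2'}$); combined with the parity constraint $\ell_1+\ell_2\equiv b_j\pmod 2$ carried by the support of $F_{\ell_1,\ell_2}(b_j)$, matching up the extra signs produced by $(-x_j)^{\ell_2}$ and $(-y_j)^{\ell_1}$ yields both identities.

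For (5) and (6), I would pass to the limit in the decay identities \eqref{DecayLaplacien} and \eqref{DecayEnk} established for $\Theta$. As $\lambda\to\lambda_0$ with $\eta_j(\lambda_0)=0$ and $\eta_j(\lambda) n_j\to a_j/2$, the left-invariant fields $X_j,Y_j$ degenerate to $\partial_{x_j},\partial_{y_j}$ on the factor $e^{i\langle\lambda,s\rangle}\mathcal{W}$, while the eigenvalue $(-2n_j+1)\eta_j(\lambda)$ tends to $-a_j$; the normal convergence proved in Lemma \ref{NormalConvergenceW} legitimizes the exchange of limits and derivatives. Identity (6) carries over even more transparently since $\mathcal{T}_j$ is independent of $\lambda$ and $m_j-n_j=b_j$ is fixed. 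Alternatively, both identities can be verified on the series by term-by-term differentiation, reducing to elementary recursions on $F_{\ell_1,\ell_2}(b_j)$.

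Identity (7) is the main obstacle. Expanding $(x_j+x'_j)^{\ell_2}$ and $(y_j+y'_j)^{\ell_1}$ by the binomial theorem inside $\mathcal{K}(a_j, x_j+x'_j, y_j+y'_j, b_j)$ and re-indexing reduces it to the combinatorial convolution
$$F_{k_1+j_1,\,k_2+j_2}(b_j)\;=\;\sum_{b'_j\in\mathbb Z} F_{k_1,k_2}(b_j-b'_j)\,F_{j_1,j_2}(b'_j),$$
which is a finite sum since the indicator functions confine $b'_j$ to a bounded range. The identity then follows from the Vandermonde convolution $\binom{k+j}{L}=\sum_p \binom{k}{p}\binom{j}{L-p}$ applied to each of the two binomial factors inside $F_{k_1+j_1,k_2+j_2}$, after which the two indicator conditions merge coherently. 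For (8), I would differentiate the series of $\mathcal{K}$ twice in $a_j$ (noting that $\partial_{a_j}(a_j/4)^{(\ell_1+\ell_2)/2}$ produces a factor $(\ell_1+\ell_2)/(2a_j)$), collect powers of $x_j$, $y_j$ of fixed degree, and match coefficients; the resulting identity is a four-parameter recursion on $F_{\ell_1,\ell_2}(b_j)$ that is verifiable directly from its definition via shifts $\ell_i\leftrightarrow \ell_i\pm 2$. The bookkeeping for (7) and (8) is the delicate step; everything else is either a direct application of the series or an appeal to already-proven identities for $\mathcal{W}$.
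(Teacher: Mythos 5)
Your treatment of Identities (1)--(6) coincides with the paper's: (1)--(4) follow from $F_{0,0}(b_j)=\delta_{0,b_j}$, the symmetry $F_{\ell_1,\ell_2}(-b_j)=(-1)^{\ell_2}F_{\ell_1,\ell_2}(b_j)$ and the parity constraint on the support of $F_{\ell_1,\ell_2}$, while (5)--(6) are obtained by passing to the limit in \eqref{DecayLaplacien} and \eqref{DecayEnk}, the exchange of limits and derivatives being licensed by the normal convergence of Lemma \ref{NormalConvergenceW}. For (7) and (8) you depart genuinely from the paper. The paper proves (7) by establishing a convolution identity for $\mathcal{W}$ itself (Lemma \ref{ConvolutionW}, via the operator-composition property \eqref{FormuleConvolution} of the representation-theoretic Fourier transform) and then letting $\eta_j(\lambda)\to0$; and it proves (8) by identifying multiplication by $|Z_j|^2$ with a finite-difference operator $\hat\Delta_j$ on the frequency side (Lemma \ref{MultiplicationDeltaJW}), computing its limit $\hat\Delta_j^0$ (Lemma \ref{LemmeConvergenceDeltaChapeau}), and concluding by duality against test functions. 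Your route stays entirely on the series side, which is more elementary and avoids all limiting arguments. It does work: both of your asserted combinatorial identities are true and are most transparently seen from the generating function
$$\sum_{k\in\mathbb{Z}}F_{\ell_1,\ell_2}(k)z^k=(z+z^{-1})^{\ell_1}(z-z^{-1})^{\ell_2},$$
which turns your convolution claim for (7) into multiplicativity of this product (equivalently, two applications of the Vandermonde convolution after summing out the indicator in $b'_j$), and turns the recursion needed for (8) into the algebraic identity $\bigl((z\partial_z)^2-(\ell_1+\ell_2)^2\bigr)u^{\ell_1}v^{\ell_2}=4\ell_2(\ell_2-1)u^{\ell_1}v^{\ell_2-2}-4\ell_1(\ell_1-1)u^{\ell_1-2}v^{\ell_2}$ with $u=z+z^{-1}$, $v=z-z^{-1}$, $v^2=u^2-4$. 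What your approach buys is a self-contained, limit-free proof of the two hardest identities; what it costs is that these two verifications, which are the entire content of (7) and (8), are asserted rather than carried out, and they are not quite routine.

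Two caveats. First, for (7) you should justify interchanging the sum over $b'_j$ with the quadruple series; this is harmless because $F_{k_1,k_2}(\cdot)$ is supported in $[-(k_1+k_2),k_1+k_2]$ and $|F_{\ell_1,\ell_2}|\le 2^{\ell_1+\ell_2}$, but it must be said. Second, and more importantly, had you actually carried out your coefficient matching for (8) you would have found that the series satisfies $(|x_j|^2+|y_j|^2)\mathcal{K}+4a_j\partial_{a_ja_j}\mathcal{K}+4\partial_{a_j}\mathcal{K}=\frac{b_j^2}{a_j}\mathcal{K}$, since $4a\partial_{aa}(a/4)^{\ell/2}+4\partial_a(a/4)^{\ell/2}=\frac{\ell^2}{a}(a/4)^{\ell/2}$, whereas the coefficients $a_j\partial_{a_ja_j}+2\partial_{a_j}$ printed in the statement give $\frac{\ell(\ell+2)}{4a}$ and do not close the identity. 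The printed coefficients are a typo (the version with $4a\partial_{aa}+4\partial_a$ is the one delivered by Lemma \ref{LemmeConvergenceDeltaChapeau} and the one actually used in the following subsection to derive the integral formula for $\mathcal{K}$); your method is precisely the kind of check that detects this, so you should flag it rather than reproduce the statement verbatim.
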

We begin by proving the easiest ones and postpone the last two.
\begin{proof}[Proof of Identities $(1)-(4)$]
Identity $(1)$ stems from 
$$F_{0,0}(b_j) = \delta_{0,b_j}, $$
which is obvious.
Identity $(2)$ follows directly from the definition.
Thanks to the relation
$$F_{\ell_1,\ell_2}(-b_j) = (-1)^{\ell_2}F_{\ell_1,\ell_2}(b_j), $$
Identity $(3)$ ensues.
Finally, since $F_{\ell_1,\ell_2}(b_j) = 0$ for $b_j + \ell_1 + \ell_2$ odd, we also have
$$F_{\ell_1,\ell_2}(-b_j) = (-1)^{b_j+\ell_1}F_{\ell_1,\ell_2}(b_j), $$
which in turn implies Identity $(4)$.
\end{proof}
\begin{proof}[Proof of Identities $(5)$ and $(6)$]
 To prove Identity $(5)$, notice that as a consequence of Equation $\eqref{DecayLaplacien}$, for $(\hat{w},w) \in \widetilde{g} \times \mathbb{R}^n$,
 $$(-2n_j+1)\eta_j(\lambda)\Theta(\hat{w},w) 
 = e^{i\langle \lambda,s\rangle}\left( \left(\partial_{x_j} + \frac 12 \eta_j(\lambda)y_j \right)^2 +  \left(\partial_{y_j} - \frac 12 \eta_j(\lambda)x_j \right)^2\right) 
 \mathcal{W}(\hat{w},x,y).$$
 Hence, simplifying the complex exponentials gives
  $$(-2n_j+1)\eta_j(\lambda)\mathcal{W}(\hat{w},x,y) 
 = \left( \left(\partial_{x_j} + \frac 12 \eta_j(\lambda)y_j \right)^2 +  \left(\partial_{y_j} - \frac 12 \eta_j(\lambda)x_j \right)^2\right) 
 \mathcal{W}(\hat{w},x,y).$$
 Using the fact that $\mathcal{W}$ is a tensor product to look only at $\widetilde{\mathcal{W}}_j$, we get Identity $(5)$ in the limit 
  $$(a_j,b_j,\lambda) \to (a_j,b_j,\lambda_0) \in \mathbb{R}_+ \times \mathbb{Z} \times \eta_j^{-1}(\{0\}).$$ 
  Finally, passing to the limit in Equation $\eqref{DecayEnk}$ yields
\begin{equation}
b_j \mathcal{K}(a_j,x_j,y_j,b_j) =  i (x_j \partial_{y_j} - y_j \partial_{x_j})\mathcal{K}(a_j,x_j,y_j,b_j),
 \label{RelationRotationK}
\end{equation}
which is exactly Identity $(6)$.
\end{proof}
Before proving Identity $(7)$, we state and prove an analogue lemma for the function $\mathcal{W}$.
\begin{lemma}
For any $Z,Z' \in \mathbb{R}^{2d}$, $\lambda \in \widetilde{\Lambda}$, $(a,b) \in g(\lambda)$, the following convolution property holds.
 $$e^{-\frac i2 \langle \lambda, \sigma(Z,Z')\rangle}\mathcal{W}(a,b,\lambda,Z+Z')
= \sum_{b' \in \mathbb{Z}^d}\mathcal{W}(a-\eta(\lambda)\cdot b',b-b',\lambda,Z)\mathcal{W}(a +\eta(\lambda)\cdot(b-b'),b',\lambda,Z').$$
\label{ConvolutionW}
\end{lemma}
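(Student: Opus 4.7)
The plan is to recognize $\mathcal{W}(\hat{w},Z)$ as a matrix coefficient of the irreducible unitary representation $u^{\lambda}$ of $\mathbb{R}^n$ on $L^2(\mathbb{R}^d)$ introduced at the beginning of the paper, and then to exploit the multiplicativity $u^{\lambda}_{w}u^{\lambda}_{w'} = u^{\lambda}_{w\cdot w'}$. The lemma is then, up to a change of parametrization, the standard matrix-multiplication law for the composition of two bounded operators expressed in an orthonormal basis.

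First, I would perform the change of variable $\xi\mapsto\xi-x/2$ in the symmetric expression of $\mathcal{W}$ to identify
$$\mathcal{W}(\hat{w},Z) = \bigl(u^{\lambda}_{(Z,0)} H_{m,\eta(\lambda)}\,\bigm|\,H_{n,\eta(\lambda)}\bigr)_{L^2(\mathbb{R}^d)},$$
where, for $(a,b,\lambda)\in\widetilde{g}$, the integers $(n,m)\in\mathbb{N}^d\times\mathbb{N}^d$ are uniquely determined by $a=\eta(\lambda)\cdot(n+m)$ and $b=m-n$. Next, since $(Z,0)\cdot(Z',0) = (Z+Z',\tfrac{1}{2}\sigma(Z,Z'))$ and $u^{\lambda}_{(Z,s)} = e^{-i\langle\lambda,s\rangle}\,u^{\lambda}_{(Z,0)}$, the representation property of $u^{\lambda}$ yields
$$u^{\lambda}_{(Z+Z',0)} = e^{\frac{i}{2}\langle\lambda,\sigma(Z,Z')\rangle}\, u^{\lambda}_{(Z,0)}\circ u^{\lambda}_{(Z',0)}.$$
Taking the matrix coefficient between $H_{m',\eta(\lambda)}$ and $H_{n,\eta(\lambda)}$ and inserting a resolution of the identity in the orthonormal Hermite basis $(H_{m'',\eta(\lambda)})_{m''\in\mathbb{N}^d}$ of $L^2(\mathbb{R}^d)$, one obtains
$$e^{-\frac{i}{2}\langle\lambda,\sigma(Z,Z')\rangle}\,\mathcal{W}(\hat{w},Z+Z') = \sum_{m''\in\mathbb{N}^d}\bigl(u^{\lambda}_{(Z,0)} H_{m''}\,\bigm|\,H_n\bigr)\bigl(u^{\lambda}_{(Z',0)} H_{m'}\,\bigm|\,H_{m''}\bigr).$$

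Finally, I would reparametrize the sum by $b':=m'-m''\in\mathbb{Z}^d$. A direct computation using the bijection $(n,m)\leftrightarrow(a,b)$ shows that the first scalar product equals $\mathcal{W}(a-\eta(\lambda)\cdot b',\,b-b',\,\lambda,\,Z)$ and the second equals $\mathcal{W}(a+\eta(\lambda)\cdot(b-b'),\,b',\,\lambda,\,Z')$; the summation range $\{b'\leq m'\}$ (imposed by $m''\geq 0$) is extended to all of $\mathbb{Z}^d$ by the standing convention that $\mathcal{W}$ vanishes whenever its arguments fall outside $g(\lambda)$, and this yields the claimed identity. The main subtlety of the proof is not analytical but notational: keeping track of the correspondence between the $(n,m)$-parametrization, which is native to the representation-theoretic viewpoint, and the $(a,b)$-parametrization in which the lemma is stated. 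Once this translation is performed, the identity reduces to the composition law of $u^{\lambda}$ expressed through a Parseval decomposition in the Hermite basis.
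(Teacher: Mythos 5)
Your proof is correct, and it reaches the identity by a genuinely more direct route than the paper's. The paper argues by duality: it convolves two tensor-product Schwartz functions $f_1\otimes\alpha$ and $f_2\otimes\alpha$, computes $\mathcal{F}_g$ of the result in two ways --- once from the explicit convolution integral, which produces the central twist $e^{\frac i2\langle\lambda,\sigma(Z,Z')\rangle}$ together with the kernel $\overline{\mathcal{W}}(\hat w,Z+Z')$, and once via the operator convolution theorem $\eqref{FormuleConvolution}$, whose operator product is expanded as a matrix product over the Hermite basis --- and then identifies the two kernels by letting $f_1,f_2,\alpha$ range over their Schwartz classes. You instead work directly at the level of the kernel, recognizing $\mathcal{W}(\hat w,Z)$ as the matrix coefficient $\bigl(u^{\lambda}_{(Z,0)}H_{m,\eta(\lambda)}\,\bigm|\,H_{n,\eta(\lambda)}\bigr)$ and invoking the homomorphism property of $u^{\lambda}$ plus Parseval. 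The computational core is shared (expanding an operator composition in the Hermite basis and translating between the $(n,m)$- and $(a,b)$-parametrizations, with the extension by zero for out-of-range indices), and the convolution theorem the paper uses is itself a consequence of the homomorphism property you use; but your version is shorter, makes the structural reason for the identity transparent (it is literally matrix multiplication for the representation), and lets Parseval absorb the sum--integral interchange that the paper must justify by Fubini, at the price of having to verify the matrix-coefficient identification of $\mathcal{W}$ and the exact sign of the central twist, which the paper's test-function computation produces automatically. One remark: your convention $b=m-n$ is the one under which the index bookkeeping closes and is the one used in Sections \ref{sec:StudyFourierKernel}--\ref{sec:ComputingKernelBoundary}, although it is opposite in sign to the displayed definition of $i_E$; that discrepancy is internal to the paper and not a flaw in your argument.
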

\begin{proof}
Let $f_1,f_2$ be in $\mathcal{S}(\mathbb{R}^{2d})$, let $\alpha$ be in $\mathcal{S}(\mathbb{R}^p)$.
For $(Z,s) \in \mathbb{R}^n$, the definition of the convolution product gives
$$((f_1 \otimes \alpha) \star (f_2 \otimes \alpha))(Z,s) = \int_{\mathbb{R}^n} f_1(Z-Z')f_2(Z') \alpha\left(s-s'-\frac 12 \sigma(Z',Z)\right)\alpha(s') dZ' ds'. $$
Taking the usual Fourier transform with respect to the central variable $s$ gives
$$\mathcal{F}_{\mathbb{R}^p}((f_1 \otimes \alpha) \star (f_2 \otimes \alpha))(Z,\lambda) = 
\hat{\alpha}(\lambda)^2\int_{\mathbb{R}^{2d}} e^{\frac i2 \langle \lambda, \sigma(Z,Z')\rangle} f_1(Z-Z')f_2(Z') dZ'. $$
Now, integrating against the function $\overline{\mathcal{W}}(\hat{w},\cdot)$, we get
$$\mathcal{F}_g((f_1 \otimes \alpha) \star (f_2 \otimes \alpha))(\hat{w}) = 
\hat{\alpha}(\lambda)^2\int_{\mathbb{R}^{2d} \times \mathbb{R}^{2d}} e^{\frac i2 \langle \lambda, \sigma(Z,Z')\rangle}\overline{\mathcal{W}}(\hat{w},Z) f_1(Z-Z')f_2(Z') dZ' dZ. $$ 
Since $\sigma$ is antisymmetric, a simple change of variable gives
\begin{equation}
 \mathcal{F}_g((f_1 \otimes \alpha) \star (f_2 \otimes \alpha))(\hat{w}) = 
\hat{\alpha}(\lambda)^2\int_{\mathbb{R}^{2d} \times \mathbb{R}^{2d}} e^{\frac i2 \langle \lambda, \sigma(Z,Z')\rangle}\overline{\mathcal{W}}(\hat{w},Z+Z') f_1(Z)f_2(Z') dZ' dZ. 
\label{ConvolutionPuisFourier}
\end{equation}
Let us now compute the Fourier transform in a different way.
Applying formula $\eqref{FormuleConvolution}$ in the Appendix, we have
$$\mathcal{F}_g((f_1 \otimes \alpha) \star (f_2 \otimes \alpha))(\hat{w}) = (\mathcal{F}_g(f_1 \otimes \alpha) \cdot \mathcal{F}_g(f_2 \otimes \alpha))(\hat{w}). $$
Expanding out the operator product and parametrizing it by $\widetilde{g}_E$, we get, for $(n,m,\lambda) \in \mathbb{N}^{2d} \times \widetilde{\Lambda}$,
$$(\mathcal{F}_g(f_1 \otimes \alpha) \cdot \mathcal{F}_g(f_2 \otimes \alpha))(n,m,\lambda) = 
\sum_{\ell \in \mathbb{N}^d} \mathcal{F}_g(f_1 \otimes \alpha)(n,\ell,\lambda) \mathcal{F}_g(f_2 \otimes \alpha)(\ell,m,\lambda). $$
Extending the definition of the Fourier transform by setting 
$$\mathcal{F}_g(f_1 \otimes \alpha)(n,\ell,\lambda) = \mathcal{F}_g(f_2 \otimes \alpha)(\ell,m,\lambda) = 0$$
whenever a component of $\ell$ is strictly negative, we also have
$$(\mathcal{F}_g(f_1 \otimes \alpha) \cdot \mathcal{F}_g(f_2 \otimes \alpha))(n,m,\lambda) = 
\sum_{\ell \in \mathbb{Z}^d} \mathcal{F}_g(f_1 \otimes \alpha)(n,\ell,\lambda) \mathcal{F}_g(f_2 \otimes \alpha)(\ell,m,\lambda). $$
Reverting to a parametrization by $\widetilde{g}$, we get, for $(a,b,\lambda) \in \widetilde{g}$,
\begin{multline*}
 (\mathcal{F}_g(f_1 \otimes \alpha) \cdot \mathcal{F}_g(f_2 \otimes \alpha))(a,b,\lambda) = \\
\sum_{b' \in \mathbb{Z}^d} \mathcal{F}_g(f_1 \otimes \alpha)(a-\eta(\lambda)\cdot b',b-b',\lambda) \mathcal{F}_g(f_2 \otimes \alpha)(a+\eta(\lambda)\cdot (b-b'),b',\lambda). 
\end{multline*}
Now, by definition of the Fourier transform $\mathcal{F}_g$, the general term of the above sum equals
$$\hat{\alpha}(\lambda)^2 \int_{\mathbb{R}^{2d} \times \mathbb{R}^{2d}} 
\overline{\mathcal{W}}(a-\eta(\lambda)\cdot b',b-b',\lambda,Z)
\overline{\mathcal{W}}(a+\eta(\lambda)\cdot (b-b'),b',\lambda,Z') f_1(Z)f_2(Z') dZ dZ'.$$
Furthermore, since $g_1$ and $g_2$ lie in $\mathcal{S}(\mathbb{R}^{2d})$, we may exchange the integral on $\mathbb{R}^{2d} \times \mathbb{R}^{2d}$ 
with the sum on $\mathbb{Z}^d$ to get
\begin{multline}
 (\mathcal{F}_g(f_1 \otimes \alpha) \cdot \mathcal{F}_g(f_2 \otimes \alpha))(a,b,\lambda) = \\
\hat{\alpha}(\lambda)^2 \int_{\mathbb{R}^{2d} \times \mathbb{R}^{2d}}
\sum_{\ell \in \mathbb{Z}^d} 
\overline{\mathcal{W}}(a-\eta(\lambda)\cdot b',b-b',\lambda,Z)
\overline{\mathcal{W}}(a+\eta(\lambda)\cdot (b-b'),b',\lambda,Z') f_1(Z)f_2(Z') dZ dZ'.
\label{FourierPuisConvolution}
\end{multline}
Having equality between the right-hand sides of $\eqref{ConvolutionPuisFourier}$ and $\eqref{FourierPuisConvolution}$ 
for any $f_1,f_2,\alpha$ in their respective Schwartz classes, the lemma follows, up to a complex conjugation on both sides.
\end{proof}
We may now prove Identity $(7)$.
\begin{proof}[Proof of Identity $(7)$]
 Since $\mathcal{W}$ writes as a tensor product in the basis $(x_1,\dots,x_d,y_1,\dots,y_d)$, Lemma \ref{ConvolutionW} gives, looking at the variables 
 $(a_j,b_j,x_j,y_j) \in \mathbb{R}_+ \times \mathbb{Z} \times \mathbb{R}^2$,
\begin{multline*}
 e^{-\frac i2 \eta_j(\lambda)x_jy'_j}\mathcal{W}_j(a_j,b_j,\lambda,x_j+x'_j,y_j+y'_j)= \\
 \sum_{b'_j \in \mathbb{Z}} \mathcal{W}_j(a_j-\eta_j(\lambda)b'_j,b_j-b'_j,\lambda,x_j,y_j)\mathcal{W}_j(a_j+\eta_j(\lambda)(b_j-b'_j),b'_j,\lambda,x'_j,y'_j). 
\end{multline*}
Then, if $\lambda \to \lambda_0 \in \eta_j^{-1}(\{0\})$, we obtain as desired
$$\mathcal{K}(a_j,x_j+x'_j,y_j+y'_j,b_j)= \\
 \sum_{b'_j \in \mathbb{Z}} \mathcal{K}(a_j,x_j,y_j,b_j-b'_j)\mathcal{K}(a_j,x'_j,y'_j,b'_j).$$
 \end{proof}
 Identity $(8)$ is much more intricate to prove and requires several intermediate steps.
 We begin with a lemma describing the effect of a multiplication operator on the kernel $\mathcal{W}$.
 To this end, define the operator $\hat{\Delta}_j$ as follows.
 For $\theta \in \mathcal{C}(\widetilde{g})$ and $\hat{w}_j = (a_j,b_j,\lambda) \in g_j(\lambda) \times \widetilde{\Lambda}$,
 \begin{multline*}
  (- \hat{\Delta}_j\theta)(\hat{w}_j) :=  
 \eta_j(\lambda)^{-2}\left(2\left(a_j+\eta_j(\lambda)\right)\theta(\hat{w}_j) \right. \\ 
 - \left(\sqrt{a_j^2 - \eta_j(\lambda)^2b_j^2}\right)\theta(\hat{w}_j^-)
 - \left( \sqrt{(a_j+2\eta_j(\lambda))^2 - \eta_j(\lambda)^2b_j^2}\right) \theta(\hat{w}_j^+) ), 
 \end{multline*}
  where
  $$\hat{w}_j^{\pm} := (a_j \pm 2\eta_j(\lambda), b_j,\lambda). $$
 \begin{lemma}
  For $\hat{w}_j = (a_j,b_j,\lambda) \in g_j(\lambda) \times \widetilde{\Lambda}$ and $Z_j = (x_j,y_j) \in \mathbb{R}^2$, we have
  $$|Z_j|^2\mathcal{W}_j(\hat{w}_j,Z_j) =  (-\hat{\Delta}_j\mathcal{W}_j)(\hat{w}_j,Z_j). $$
  \label{MultiplicationDeltaJW}
 \end{lemma}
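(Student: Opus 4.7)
The plan is to prove this via Hermite recursion, applied to $x_j$ and $y_j$ separately, then factored through $x_j^2 + y_j^2 = (x_j + iy_j)(x_j - iy_j)$. To begin, I would reparametrize $\mathcal{W}_j$ using $(n_j, m_j)$ rather than $(a_j, b_j)$, via $n_j = (\eta_j(\lambda)^{-1}a_j - b_j)/2$ and $m_j = (\eta_j(\lambda)^{-1}a_j + b_j)/2$, so that the shifts $\hat{w}_j^{\pm} = (a_j \pm 2\eta_j(\lambda), b_j, \lambda)$ correspond exactly to $(n_j, m_j) \mapsto (n_j \pm 1, m_j \pm 1)$, whereas single shifts of $n_j$ or $m_j$ alone would move both $a_j$ and $b_j$.

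The first step is to compute $x_j \mathcal{W}_j$ and $y_j \mathcal{W}_j$ as linear combinations of $\mathcal{W}_j$ at shifted indices. For $x_j$, I would write $x_j = (\xi_j + x_j/2) - (\xi_j - x_j/2)$ inside the defining integral and apply the Hermite recursion
\[
\xi H_{n, \eta}(\xi) = \eta^{-1/2}\bigl(\sqrt{n/2}\,H_{n-1, \eta}(\xi) + \sqrt{(n+1)/2}\,H_{n+1, \eta}(\xi)\bigr)
\]
to each piece, producing a sum of four terms in which $n_j$ or $m_j$ is shifted by $\pm 1$. For $y_j$, I would convert $y_j e^{-i\eta_j \xi_j y_j}$ into $\frac{i}{\eta_j}\partial_{\xi_j}$ acting on the exponential, integrate by parts, and then apply the dual recursion $\partial_\xi H_{n, \eta} = \sqrt{\eta/2}\bigl(\sqrt{n}\,H_{n-1, \eta} - \sqrt{n+1}\,H_{n+1, \eta}\bigr)$, which yields another four-term sum with opposite signs on the creation terms.

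The key simplification is that forming $(x_j + iy_j)\mathcal{W}_j$ causes pairwise cancellation, leaving only
\[
(x_j + iy_j)\mathcal{W}_j(n_j, m_j) = \sqrt{2/\eta_j(\lambda)}\,\bigl(\sqrt{m_j}\,\mathcal{W}_j(n_j, m_j - 1) - \sqrt{n_j + 1}\,\mathcal{W}_j(n_j + 1, m_j)\bigr),
\]
with the conjugate formula for $(x_j - iy_j)\mathcal{W}_j$. Since multiplication by $x_j \pm iy_j$ is scalar and therefore commutes, I would then factor $x_j^2 + y_j^2 = (x_j + iy_j)(x_j - iy_j)$ and apply the $(x_j + iy_j)$ formula to each of the two shifted terms already present in $(x_j - iy_j)\mathcal{W}_j$. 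The four resulting terms collapse: the two cross terms (shifting only $n_j$ or only $m_j$) cancel, and one is left precisely with $\mathcal{W}_j(n_j, m_j)$ and $\mathcal{W}_j(n_j \pm 1, m_j \pm 1) = \mathcal{W}_j(\hat{w}_j^\pm)$, carrying coefficients $\frac{2}{\eta_j}(n_j + m_j + 1)$, $-\frac{2}{\eta_j}\sqrt{n_j m_j}$ and $-\frac{2}{\eta_j}\sqrt{(n_j+1)(m_j+1)}$ respectively.

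Finally, I would translate back to $(a_j, b_j)$ coordinates using $\sqrt{n_j m_j} = \frac{1}{2\eta_j(\lambda)}\sqrt{a_j^2 - \eta_j(\lambda)^2 b_j^2}$, $\sqrt{(n_j+1)(m_j+1)} = \frac{1}{2\eta_j(\lambda)}\sqrt{(a_j + 2\eta_j(\lambda))^2 - \eta_j(\lambda)^2 b_j^2}$ and $n_j + m_j + 1 = (a_j + \eta_j(\lambda))/\eta_j(\lambda)$; the resulting expression matches $-\hat{\Delta}_j \mathcal{W}_j$ term by term. The main obstacle is purely bookkeeping: tracking the powers of $\eta_j(\lambda)^{\pm 1/2}$, the differing signs that distinguish the position and derivative recursions, and verifying that the single-shift cross terms vanish in the iterated computation; once these are handled, the matching with $\hat{\Delta}_j$ is mechanical.
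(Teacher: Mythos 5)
Your argument is correct and lands exactly on the paper's formula, but it is organized differently. The paper does not build $|Z_j|^2$ out of first-order ladder identities: it converts $y_j^2$ into $-\eta_j(\lambda)^{-2}\partial_{\xi_j}^2$ by a double integration by parts, writes $x_j^2 = (\xi_j+\tfrac{x_j}{2})^2 + (\xi_j-\tfrac{x_j}{2})^2 - 2(\xi_j+\tfrac{x_j}{2})(\xi_j-\tfrac{x_j}{2})$, and then recognizes the diagonal part as the rescaled harmonic oscillator acting on each Hermite factor, so that Equation \eqref{RescaledHarmonicOscillator} immediately produces the coefficient $2\eta_j(\lambda)(n_j+m_j+1)$; only the cross terms $-2\eta_j(\lambda)^2(M_jH_{m_j,\eta})(M_jH_{n_j,\eta}) - 2H_{m_j,\eta}'H_{n_j,\eta}'$ require the first-order recursions \eqref{MultiplicationHermite} and \eqref{DerivationHermite}, and there the single-shift terms cancel between the $M_jM_j$ and $\partial\partial$ products. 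Your factorization $x_j^2+y_j^2=(x_j+iy_j)(x_j-iy_j)$ buys cleaner intermediate objects — the two-term ladder formulas for $(x_j\pm iy_j)\mathcal{W}_j$ are of independent interest (they are the Fourier-side analogues of $X_j\pm iY_j$) — at the cost of redoing by hand the diagonal bookkeeping that the paper gets for free from the oscillator eigenvalue equation. Two small inaccuracies, neither fatal: with the two-term ladder formulas, the composition produces no single-shift "cross terms" at all (the four terms are two copies of $(n_j,m_j)$ plus $(n_j\pm1,m_j\pm1)$), so nothing needs to "cancel" at that stage; and the assignment of $\sqrt{m_j}\,\mathcal{W}_j(n_j,m_j-1)$ versus $\sqrt{m_j+1}\,\mathcal{W}_j(n_j,m_j+1)$ to $x_j+iy_j$ depends on the sign convention in the oscillating factor $e^{\pm i\eta_j(\lambda)\xi_jy_j}$ (which the paper itself uses inconsistently); either choice yields the same product $(x_j+iy_j)(x_j-iy_j)\mathcal{W}_j$ and hence the stated identity.
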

\begin{proof}
We temporarily parametrize the space $\widetilde{g}$ by $\widetilde{g}_E$.
Let us denote 
$$n_j := \frac{\eta_j(\lambda)^{-1}a_j - b_j}{2},$$
$$m_j := \frac{\eta_j(\lambda)^{-1}a_j + b_j}{2}.$$
 From the obvious identity
 $$y_j^2 e^{i \eta_j(\lambda)\xi_jy_j} = - \eta_j(\lambda)^{-2} \left( e^{i \eta_j(\lambda)y_j \cdot}\right)'' (\xi_j) $$
 and integration by parts, we first have
 \begin{multline*}
   |Z_j|^2\mathcal{W}_j(\hat{w}_j,Z_j) = \\ \eta_j(\lambda)^{-2} 
 \int_{\mathbb{R}} e^{i \eta_j(\lambda) \xi_j y_j} \left(- \frac{d^2}{d\xi_j^2} + \eta_j(\lambda)^2 x_j^2 \right) 
 \left(H_{m_j,\eta_j(\lambda)}\left(\xi_j + \frac{x_j}{2}\right)H_{n_j,\eta_j(\lambda)}\left(\xi_j - \frac{x_j}{2}\right) \right) d\xi_j.
 \end{multline*}
Writing 
$$x_j^2 = \left(\xi_j + \frac{x_j}{2}\right)^2 + \left(\xi_j - \frac{x_j}{2}\right)^2 - 2 \left(\xi_j + \frac{x_j}{2}\right)\left(\xi_j + \frac{x_j}{2}\right),$$
we get, thanks to Equation $\eqref{RescaledHarmonicOscillator}$,
\begin{align*}
  \left(- \frac{d^2}{d\xi_j^2} + \eta_j(\lambda)^2 x_j^2 \right) 
 & \left(H_{m_j,\eta_j(\lambda)}\left(\xi_j + \frac{x_j}{2}\right)H_{n_j,\eta_j(\lambda)}\left(\xi_j - \frac{x_j}{2}\right) \right) \\
 & = 2 \eta_j(\lambda)(n_j+m_j+1)H_{m_j,\eta_j(\lambda)}\left(\xi_j + \frac{x_j}{2}\right)H_{n_j,\eta_j(\lambda)}\left(\xi_j - \frac{x_j}{2}\right) \\
 & - 2 \left( M_jH_{m_j,\eta_j(\lambda)}\right)\left(\xi_j + \frac{x_j}{2}\right)\left(M_jH_{n_j,\eta_j(\lambda)}\right)\left(\xi_j - \frac{x_j}{2}\right) \\
 & - 2\left( H_{m_j,\eta_j(\lambda)}\right)'\left(\xi_j + \frac{x_j}{2}\right)\left(H_{n_j,\eta_j(\lambda)}\right)'\left(\xi_j - \frac{x_j}{2}\right).
\end{align*}
Using Equations $\eqref{MultiplicationHermite}$ and $\eqref{DerivationHermite}$ yields
\begin{align*}
\eta_j(\lambda)^{-1}&\left(- \frac{d^2}{d\xi_j^2} + \eta_j(\lambda)^2 x_j^2 \right)
 \left(H_{m_j,\eta_j(\lambda)}\left(\xi_j + \frac{x_j}{2}\right)H_{n_j,\eta_j(\lambda)}\left(\xi_j - \frac{x_j}{2}\right) \right) \\
 & = 2 (n_j+m_j+1)H_{m_j,\eta_j(\lambda)}\left(\xi_j + \frac{x_j}{2}\right)H_{n_j,\eta_j(\lambda)}\left(\xi_j - \frac{x_j}{2}\right) \\
 & -2 \sqrt{n_jm_j} H_{m_j-1,\eta_j(\lambda)}\left(\xi_j + \frac{x_j}{2}\right)H_{n_j-1,\eta_j(\lambda)}\left(\xi_j - \frac{x_j}{2}\right) \\
 & -2 \sqrt{(n_j+1)(m_j+1)} H_{m_j+1,\eta_j(\lambda)}\left(\xi_j + \frac{x_j}{2}\right)H_{n_j+1,\eta_j(\lambda)}\left(\xi_j - \frac{x_j}{2}\right).
\end{align*}
The result follows by reverting to the variables $(a_j,b_j,\lambda)$.
\end{proof}
The next lemma describes the behaviour of the operator $- \hat{\Delta}$ as $\eta_j(\lambda) \to 0$.
 \begin{lemma}
 Let $\theta : \mathbb{R}_+^* \times \mathbb{Z} \to \mathbb{R}$ be a $\mathcal{C}^2$ function of its first argument.
 Then, as $\lambda \to \lambda_0 \in \Lambda_0$ with $\eta_j(\lambda_0) = 0$, we have, for $(a_j,b_j) \in \mathbb{R}_+^* \times \mathbb{Z}$,
 $$(-\hat{\Delta}_j\theta)(a_j,b_j) \longrightarrow (- \hat{\Delta}_j^0\theta)(a_j,b_j) := 
 -4a_j \partial^2_{a_ja_j}\theta(a_j,b_j) - 4\partial_{a_j}\theta(a_j,b_j) + \frac{b_j^2}{a_j}\theta(a_j,b_j).$$
 \label{LemmeConvergenceDeltaChapeau}
 \end{lemma}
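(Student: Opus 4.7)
The plan is to perform a straightforward Taylor expansion in the small parameter $\varepsilon := \eta_j(\lambda)$ and to verify that the lower-order contributions cancel, leaving precisely the claimed second-order differential operator. Since the inputs $(a_j,b_j)$ are held fixed with $a_j > 0$, all expressions involving the square roots are well-defined once $\varepsilon$ is small enough.

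First I would expand the two square-root coefficients in the definition of $-\hat{\Delta}_j$. Using $\sqrt{1+u} = 1 + u/2 - u^2/8 + O(u^3)$, one finds
$$\sqrt{a_j^2 - \varepsilon^2 b_j^2} = a_j - \frac{\varepsilon^2 b_j^2}{2a_j} + O(\varepsilon^4),$$
and, after a slightly more careful expansion of $(a_j+2\varepsilon)^2 - \varepsilon^2 b_j^2 = a_j^2 + 4a_j\varepsilon + (4-b_j^2)\varepsilon^2$, one gets
$$\sqrt{(a_j+2\varepsilon)^2 - \varepsilon^2 b_j^2} = a_j + 2\varepsilon - \frac{\varepsilon^2 b_j^2}{2a_j} + O(\varepsilon^3).$$
Second, using the $\mathcal{C}^2$ regularity of $\theta$ in its first variable, I would expand
$$\theta(a_j \pm 2\varepsilon, b_j) = \theta(a_j,b_j) \pm 2\varepsilon\, \partial_{a_j}\theta(a_j,b_j) + 2\varepsilon^2\, \partial_{a_j a_j}^2\theta(a_j,b_j) + o(\varepsilon^2).$$

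Next I would substitute both expansions into
$$2(a_j+\varepsilon)\theta(a_j,b_j) - \sqrt{a_j^2 - \varepsilon^2 b_j^2}\,\theta(a_j-2\varepsilon,b_j) - \sqrt{(a_j+2\varepsilon)^2 - \varepsilon^2 b_j^2}\,\theta(a_j+2\varepsilon,b_j),$$
and collect by powers of $\varepsilon$. The $O(1)$ terms are $2a_j\theta - a_j\theta - a_j\theta = 0$; the $O(\varepsilon)$ terms combine as $2\varepsilon\theta + 2a_j\varepsilon\,\partial_{a_j}\theta - 2a_j\varepsilon\,\partial_{a_j}\theta - 2\varepsilon\theta = 0$. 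This double cancellation is not a miracle: it reflects the fact that $-\hat{\Delta}_j$ was defined precisely so that it should behave like a second-order operator in a suitable limit.

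Once these vanish, the surviving $O(\varepsilon^2)$ terms are exactly
$$\varepsilon^2\left(-4a_j\,\partial_{a_j a_j}^2\theta(a_j,b_j) - 4\,\partial_{a_j}\theta(a_j,b_j) + \frac{b_j^2}{a_j}\theta(a_j,b_j)\right) + o(\varepsilon^2).$$
Dividing by $\varepsilon^2$, as required by the definition of $-\hat{\Delta}_j$, and letting $\varepsilon \to 0$ yields precisely the announced operator $-\hat{\Delta}_j^0$. The only truly delicate points are (i) ensuring that all expansions are carried to order $\varepsilon^2$ inclusive, since that is where the non-trivial information lies, and (ii) justifying the $o(\varepsilon^2)$ remainder for the $\theta$-terms from the mere $\mathcal{C}^2$ hypothesis, which follows from the standard Taylor–Young formula.
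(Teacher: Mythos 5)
Your proposal is correct and follows essentially the same route as the paper: a second-order Taylor expansion of $\theta(a_j\pm 2\eta_j(\lambda),b_j)$ together with the expansions $\sqrt{a_j^2-\eta_j(\lambda)^2b_j^2}=a_j-\tfrac{\eta_j(\lambda)^2b_j^2}{2a_j}+o(\eta_j(\lambda)^2)$ and $\sqrt{(a_j+2\eta_j(\lambda))^2-\eta_j(\lambda)^2b_j^2}=a_j+2\eta_j(\lambda)-\tfrac{\eta_j(\lambda)^2b_j^2}{2a_j}+o(\eta_j(\lambda)^2)$, substituted into the definition of $-\hat{\Delta}_j$. Your explicit verification of the cancellations at orders $0$ and $1$ and the bookkeeping of the surviving $O(\eta_j(\lambda)^2)$ terms is exactly the computation the paper leaves implicit.
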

\begin{proof}
Since $-\hat{\Delta}_j$ looks like a finite difference operator, it seems only natural to perform Taylor expansions for $\theta$ around $\hat{w}_j$ when $\eta_j(\lambda)$ is small.
At second order in the parameter $\eta_j(\lambda)$, we have
 $$\theta(a_j+2\eta_j(\lambda),b_j) = 
 \theta(a_j,b_j) + 2\eta_j(\lambda) \partial_{a_j}\theta(a_j,b_j) + \frac 12 (2\eta_j(\lambda))^2 \partial^2_{a_ja_j} \theta(a_j,b_j)+ o(\eta_j(\lambda)^2), $$
 $$\theta(a_j-2\eta_j(\lambda),b_j) = 
 \theta(a_j,b_j) - 2\eta_j(\lambda) \partial_{a_j}\theta(a_j,b_j) + \frac 12 (2\eta_j(\lambda))^2 \partial^2_{a_ja_j} \theta(a_j,b_j)+ o(\eta_j(\lambda)^2), $$
 $$\sqrt{a_j^2 - \eta_j(\lambda)^2b_j^2} = a_j - \frac 12 \frac{\eta_j(\lambda)^2b_j^2}{a_j} + o(\eta_j(\lambda)^2),$$
 $$\sqrt{(a_j+2\eta_j(\lambda))^2 - \eta_j(\lambda)^2b_j^2}
 = a_j + 2\eta_j(\lambda) -  \frac 12 \frac{\eta_j(\lambda)^2b_j^2}{a_j} + o(\eta_j(\lambda)^2).$$
 Plugging these equalities in the definition of $-\hat{\Delta}_j$ gives the result.
\end{proof}
We now have the required tools to prove the last Identity of Proposition \ref{IdentitiesOnK}.
\begin{proof}[Proof of Identity $(8)$]
 To circumvent the difficulty of a discrete-to-continuous limit, we argue by duality.
 Let $\lambda_0 \in \Lambda_0$ with $\eta_j(\lambda_0) = 0$.
 Let $\psi : \mathbb{R}_+^* \to \mathbb{R}$ be smooth and compactly supported.
 For $b_j \in \mathbb{Z}$, $\lambda \in \widetilde{\Lambda}$ and $Z_j \in \mathbb{R}^2$, let 
 $$\mathcal{A}_j(b_j,\lambda,Z_j) := \int_{\mathbb{R}_+} \mathcal{W}_j(a_j,b_j,\lambda,Z_j)\psi(a_j) d\mu^{\lambda}_{j,b_j}(a_j).$$
 Thanks to Lemma \ref{MultiplicationDeltaJW}, we have
 $$|Z_j|^2\mathcal{A}_j(b_j,\lambda,Z_j) = \int_{\mathbb{R}_+} (-\hat{\Delta}_j\mathcal{W}_j)(a_j,b_j,\lambda,Z_j)\psi(a_j) d\mu^{\lambda}_{j,b_j}(a_j).$$
 Denoting by $^t \hat{\Delta}_j$ and $^t\hat{\Delta}^0_j$ the adjoints of $\hat{\Delta}_j$ and $\hat{\Delta}_j^0$ respectively for the $L^2$ inner product, we get
 $$|Z_j|^2\mathcal{A}_j(b_j,\lambda,Z_j) = \int_{\mathbb{R}_+} \mathcal{W}_j(a_j,b_j,\lambda,Z_j)(-^t\hat{\Delta}_j\psi)(a_j) d\mu^{\lambda}_{j,b_j}(a_j).$$
 As an immediate corollary of Lemma \ref{LemmeConvergenceDeltaChapeau}, for smooth $\theta : \mathbb{R}_+^* \to \mathbb{R}$ with compact support, we have the convergence 
 $$^t \hat{\Delta}_j\theta \to \, ^t\hat{\Delta}^0_j\theta \text{ in } \mathcal{C}^0(\mathbb{R}_+^*,\mathbb{R})$$
 as $\lambda \to \lambda_0$.
 Hence, applying this convergence to $\psi$ gives, as $\lambda \to \lambda_0$,
 $$\int_{\mathbb{R}_+} \mathcal{W}_j(a_j,b_j,\lambda,Z_j)(-^t\hat{\Delta}_j\psi)(a_j) d\mu^{\lambda}_{j,b_j}(a_j) \longrightarrow 
 \int_{\mathbb{R}_+} \mathcal{W}_j(a_j,b_j,\lambda_0,Z_j)(-^t\hat{\Delta}^0_j\psi)(a_j) da_j.$$
 By definition of $-^t\hat{\Delta}^0_j$, we get, as $\lambda \to \lambda_0$,
 $$|Z_j|^2\mathcal{A}_j(b_j,\lambda,Z_j) \longrightarrow  \int_{\mathbb{R}_+} (-\hat{\Delta}^0_j\mathcal{K})(a_j,b_j,\lambda_0,Z_j)\psi(a_j) da_j.$$
 On the other hand, recalling the definition of $\mathcal{A}_j$ yields
 $$|Z_j|^2\mathcal{A}_j(b_j,\lambda,Z_j) \longrightarrow  \int_{\mathbb{R}_+} |Z_j|^2\mathcal{K}(a_j,b_j,\lambda_0,Z_j)\psi(a_j) da_j. $$
 Since the reasoning above applies to all smooth and compactly supported $\psi : \mathbb{R}^*_+ \to \mathbb{R}$, the last Identity is proved.
\end{proof}
\subsection{Another expression for $\mathcal{K}$}
 The form of Identity $(7)$, of convolution type, motivates us to look at the Fourier synthesis of $\mathcal{K}$ in its last variable.
 For $(a,x,y,z) \in \mathbb{R}_+ \times \mathbb{R}^3$, let
 $$\widetilde{\mathcal{K}}(a,x,y,z) := \sum_{b_j \in \mathbb{Z}} \mathcal{K}(a,x,y,b_j) e^{ib_jz}. $$
 The function $\widetilde{\mathcal{K}}$ is well defined, since for any $(a,x,y)$ in a bounded set $\mathcal{B}$ and any $N \in \mathbb{N}$, we have
 $$\sup_{b_j\in \mathbb{Z}}\sup_{(a,x,y) \in \mathcal{B}} (1+|b_j|^N)|\mathcal{K}(a,x,y,b_j)| < \infty. $$
 Applying Identity $(7)$ to $\widetilde{\mathcal{K}}$ gives, for $(a,x,y,z) \in \mathbb{R}_+ \times \mathbb{R}^3$,
 $$\widetilde{\mathcal{K}}(a,x+x',y+y',z) = \widetilde{\mathcal{K}}(a,x,y,z)\widetilde{\mathcal{K}}(a,x',y',z). $$
 From the definition of $\widetilde{\mathcal{K}}$ and Identity $(1)$, we infer 
 $$\widetilde{\mathcal{K}}(a,0,z) = \sum_{b_j\in \mathbb{Z}} \delta_{0,b_j} e^{ib_jz} \equiv 1.$$
 Hence, for each $(a,z) \in \mathbb{R}_+ \times \mathbb{R}$, the function $(x,y) \mapsto \widetilde{\mathcal{K}}(a,x,y,z)$ is a non trivial, 
 continuous group morphism from $\mathbb{R}^2$ to $\mathbb{R}$.
 From the equality 
 $$\widetilde{\mathcal{K}}(a,-x,-y,z) = \overline{\widetilde{\mathcal{K}}(a,x,y,z)}, $$
 which stems from Identities $(2)-(4)$, this function is also a character of $\mathbb{R}^2$.
 Thus, there exists a function $\Phi : \mathbb{R}_+ \times \mathbb{R} \to \mathbb{R}^2$ such that, for any $(a,x,y,z) \in \mathbb{R}_+ \times \mathbb{R}^3$, we have
 $$\widetilde{\mathcal{K}}(a,x,y,z) = e^{i \langle (x,y), \Phi(a,z)\rangle}.$$
 Since $\mathcal{K}$ is smooth in all variables and rapidly decaying in $b_j$, the equality 
 $$\Phi(a,z) = \overline{\widetilde{\mathcal{K}}(a,x,y,z)} \nabla_{x,y} \widetilde{\mathcal{K}}(a,x,y,z) $$
 entails the smoothness of $\Phi$ in $(a,z)$.
 Thus, Identity $(6)$ applied to $\widetilde{\mathcal{K}}$ implies, viewing $\mathbb{R}^2$ as $\mathbb{C}$,
 $$\partial_z \Phi(a,z) = i \Phi(a,z). $$
 Solving this differential equation leads to 
 $$\Phi(a,z) = e^{iz} \Phi(a,0).  $$
 Thanks to Identity $(5)$, we get
 $$|\Phi(a,z)| = \sqrt{a}. $$
 Hence, there exists a map $\phi : \mathbb{R}_+ \to \mathbb{T}$ such that, for any $(a,z)$ in $\mathbb{R}_+ \times \mathbb{R}$,
 $$\Phi(a,z) = \sqrt{a}e^{iz} \phi(a). $$
 We now transfer the information given by Identity $(8)$ on $\widetilde{\mathcal{K}}$ to find an equation on $\phi$.
 For $(a,x,y,z) \in \mathbb{R}_+ \times \mathbb{R}^3$, we have
 $$(|x|^2+|y|^2)\widetilde{\mathcal{K}}(a,x,y,z) + 4a\partial_{aa}\widetilde{\mathcal{K}}(a,x,y,z) + 4 \partial_a\widetilde{\mathcal{K}}(a,x,y,z) +  
  \frac 1a \partial^2_{zz}\widetilde{\mathcal{K}}(a,x,y,z) = 0.$$
 To keep as few terms as possible, we divide the above equation by $\widetilde{\mathcal{K}}$ and look at the imaginary part.
 We have
 $$\Im\left(\frac{4 \partial_a\widetilde{\mathcal{K}}(a,x,y,z)}{\widetilde{\mathcal{K}}(a,x,y,z)}\right) 
 = 4\left( \frac{\langle(x,y), e^{iz}\phi(a)\rangle}{2\sqrt{a}} + \sqrt{a}\langle(x,y), e^{iz}\phi'(a)\rangle \right),$$
  $$\Im\left(\frac{4a \partial_{aa}\widetilde{\mathcal{K}}(a,x,y,z)}{\widetilde{\mathcal{K}}(a,x,y,z)}\right) 
 = 4a \left(- \frac{\langle(x,y), e^{iz}\phi(a)\rangle}{4a^{\frac 32}} + 2 \frac{\langle(x,y), e^{iz}\phi'(a)\rangle}{2\sqrt{a}}
 + \sqrt{a}\langle(x,y), e^{iz}\phi''(a)\rangle\right) ,$$
  $$\Im\left(\frac{ \partial_{zz}\widetilde{\mathcal{K}}(a,x,y,z)}{a\widetilde{\mathcal{K}}(a,x,y,z)}\right) 
 = - \frac{1}{\sqrt{a}}\langle(x,y), e^{iz}\phi(a)\rangle .$$
 Gathering and simplifying these equalities yields
 $$ a\phi''(a) + 2\phi'(a) = 0.$$
 Hence, there exist two constants $C_1,C_2 \in \mathbb{C}$ such that, for all $a > 0$,
 $$\phi(a) = C_1\ln a + C_2. $$
 As $\phi$ takes its values in unit circle, it is in particular bounded, which forces $C_1$ to vanish.
 Hence, $\phi$ is actually constant and there exists $z_0 \in \mathbb{R}$ such that $\phi(a) \equiv e^{iz_0}$ for all $a \in \mathbb{R}_+$.
 To compute the value of $z_0$, we recall that Identity $(2)$ implies that for all $(a,x,y,z) \in \mathbb{R}_+ \times \mathbb{R}^3$,
 $$\widetilde{\mathcal{K}}(a,x,-y,z) = \overline{\widetilde{\mathcal{K}}(a,x,y,-z)}. $$
 Hence, for all $(a,x,y,z) \in \mathbb{R}_+ \times \mathbb{R}^3$,
 $$e^{\sqrt{a}i(x \cos(z+z_0) - y\sin(z+z_0))} = e^{-\sqrt{a}i(x \cos(-z+z_0) + y\sin(-z+z_0))} = e^{-\sqrt{a}i(x \cos(z-z_0) - y\sin(z-z_0))}. $$
 This is only possible if 
 $$z_0 \equiv \frac{\pi}{2} [\pi]. $$
 Thus, there exists $\delta \in \{\pm1\}$ such that for all $(a,x,y,z) \in \mathbb{R}_+ \times \mathbb{R}^3$,
 $$\widetilde{\mathcal{K}}(a,x,y,z) = e^{\delta \sqrt{a}i(x \sin z - y\cos z)}  . $$
 Finally, using the definition of $\widetilde{\mathcal{K}}$ for small $y > 0$ and $z = x = 0$ gives
 $$\widetilde{\mathcal{K}}(a,0,y,0) = \sum_{b_j \in \mathbb{Z}} \mathcal{K}(a,0,y,b_j) 
 = \sum_{b_j \in \mathbb{Z}} \sum_{\ell_1 \in \mathbb{N}} \left( \frac{a}{4}\right)^{\frac{\ell_1}{2}}F_{\ell_1,0}(b_j) \frac{(iy)^{\ell_1}}{\ell_1!}
 = 1 + \sqrt{a}iy + \mathcal{O}(y^2).$$
 On the other hand, the form of $\widetilde{\mathcal{K}}$ entails, again for $y > 0$ small,
 $$\widetilde{\mathcal{K}}(a,0,y,0) = e^{-\delta \sqrt{a}iy} = 1 - \delta \sqrt{a} iy + \mathcal{O}(y^2) $$
 and $\delta = -1$.
 Owing to Fourier inversion for periodic functions on the real line, we have, for all $(a,x,y,b_j) \in \mathbb{R}_+ \times \mathbb{R}^2 \times \mathbb{Z}$,
 $$\mathcal{K}(a,x,y,b_j) = \frac{1}{2\pi} \int_{-\pi}^{\pi} e^{-\sqrt{a}i(x \sin z - y\cos z)} e^{ib_jz} dz.$$

 \appendix
 
 \section{Standard computations on the Hermite functions.}
 \label{appendix:Hermite}
 In this appendix, we recall the definition of the Hermite functions along with their most useful properties.
 The computations may be found e.g. in \cite{Olver}.
 For $x\in \mathbb{R}^d$, the first Hermite function $H_0$ is defined by 
 $$H_0(x) := \pi^{-\frac d4} e^{-\frac{|x|^2}{2}}.$$
 Let $M_j$ be the multiplication operator with respect to the $j$-th variable, defined for $f : \mathbb{R} \to \mathbb{R}$ and $x\in \mathbb{R}^d$ by
 $$(M_jf)(x) := x_jf(x). $$
 Defining the creation operator $C_j$ by
 $$C_j := -\partial_j + M_j, $$
 the Hermite functions family is defined, for $n = (n_1,\dots,n_d) \in \mathbb{N}^d$n by
 $$H_n := \frac{1}{\sqrt{2^{|n|} n!}} C^n H_0, $$
 where, as usual, 
 $$C^n := \prod_{j=1}^dC_j^{n_j} $$
 and
 $$|n| := \sum_{j=1}^d n_j \: , \: n! := \prod_{j=1}^d n_j!. $$
 We also define the annihilation operator $A_j$ by
 $$A_j := \partial_j + M_j $$
 and notice that $A_j$ is the (formal) adjoint of $C_j$ for the usual inner product on $L^2(\mathbb{R}^d)$.
 It is a standard fact that the family $(H_n)_{n\in \mathbb{N}^d}$ is an orthonormal basis of $L^2(\mathbb{R}^d)$ and in particular that for any $(n,m) \in \mathbb{N}^{2d}$,
 $$\left(H_n | H_m \right)_{L^2(\mathbb{R}^d)} := \int_{\mathbb{R}^d} H_n(x) H_m(x) dx 
 =\left\{
\begin{array}{rcl}
 1 & \text{ if } n = m,\\
 0 & \text{ otherwise.}
\end{array}
\right. $$
Furthermore, the very definition of the Hermite functions entails that for any $n \in \mathbb{N}^d$ and $1 \leq j \leq d$, there holds
\begin{equation}
 C_j H_n = \sqrt{2(n_j+1)}H_{n+\delta_j} 
 \label{CreationHermite}
\end{equation}
and by duality, we get
\begin{equation}
 A_j H_n = \sqrt{2n_j}H_{n-\delta_j},
 \label{AnnihilationHermite}
\end{equation}
where $n \pm \delta_j := (n_1,\dots,n_j \pm 1, \dots, n_d)$.
Adding and subtracting these two equalities gives, for $n \in \mathbb{N}^d$ and $1 \leq j \leq d$,
\begin{equation}
M_j H_n = \frac{\sqrt{2}}{2} (\sqrt{n_j}H_{n-\delta_j} + \sqrt{n_j+1}H_{n-\delta_j}) 
 \label{MultiplicationHermite}
\end{equation}
\begin{equation}
A_j H_n = \frac{\sqrt{2}}{2} (\sqrt{n_j}H_{n-\delta_j} - \sqrt{n_j+1}H_{n-\delta_j}). 
 \label{DerivationHermite}
\end{equation}
Also, combining the action of $C_j$ and $A_j$ gives, for $n \in \mathbb{N}^d$ and $1 \leq j \leq d$,
$$-\Delta_{\text{osc},j}H_n := (-\partial_j^2 + M_j^2) H_n = (C_jA_j + \text{Id})H_n = (2n_j+1)H_n. $$
Now, if $\eta = (\eta_1,\dots,\eta_d) \in (\mathbb{R}_+^*)^d$ and $n \in \mathbb{N}^d$, we define the rescaled Hermite function $H_{n,\eta}$ by
$$H_{n,\eta} := |\eta|^{\frac d4} H_n(|\eta|^{\frac d4} \cdot). $$
These functions satisfy identities similar to those of the usual Hermite functions.
In particular, they also form an orthonormal basis of $L^2(\mathbb{R}^d)$ and for $n\in \mathbb{N}^d$, $\eta = (\eta_1,\dots,\eta_d) \in (\mathbb{R}_+^*)^d$, we have
\begin{equation}
 (-\partial_j^2 + \eta_j^2M_j^2) H_{n,\eta} = \eta_j (2n_j+1)H_{n,\eta}. 
 \label{RescaledHarmonicOscillator}
\end{equation}
We also state and prove here a technical lemma on the growth of the $L^2$ norms of Hermite functions to which multiple derivatives or multiplication operators are applied.
\begin{lemma}
For $n,\ell \in\mathbb{N}$, we have
$$\|H_n^{(\ell)}\|_{L^2(\mathbb{R})} \leq (2n+2\ell)^{\frac{\ell}{2}} $$
and
$$\|M^{\ell}H_n\|_{L^2(\mathbb{R})} \leq (2n+2\ell)^{\frac{\ell}{2}}. $$
 \label{RecurrenceChianteHermite}
\end{lemma}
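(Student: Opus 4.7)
The plan is a joint induction on $\ell$, establishing both inequalities at once. The base case $\ell = 0$ is immediate, since $H_n$ is a unit vector in $L^2(\mathbb{R})$ and $(2n)^0 = 1$. For the induction step, I would rely on two simple observations.

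First, both $H_n^{(\ell)}$ and $M^\ell H_n$ are supported (in the Hermite expansion) on indices of the form $\{n-\ell, n-\ell+2,\dots, n+\ell\}\cap\mathbb{N}$. Indeed, $\partial H_k$ and $M H_k$ are each a linear combination of $H_{k-1}$ and $H_{k+1}$ by $\eqref{MultiplicationHermite}$--$\eqref{DerivationHermite}$, so the support of the Hermite coefficients spreads by exactly one level each time $\partial$ or $M$ is applied. In particular, the maximal index in the expansion of $H_n^{(\ell)}$ (or $M^\ell H_n$) is at most $n+\ell$.

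Second, I would prove the following auxiliary estimate: for any $f \in L^2(\mathbb{R})$ whose Hermite expansion $f = \sum_k \alpha_k H_k$ is supported on indices $k \le N$, one has
$$\|\partial f\|_{L^2}^2 + \|Mf\|_{L^2}^2 \le (2N+1)\|f\|_{L^2}^2.$$
This follows directly from the harmonic oscillator identity $(-\partial^2 + M^2)H_k = (2k+1)H_k$ recalled in the appendix: integrating by parts,
$$\|\partial f\|_{L^2}^2 + \|Mf\|_{L^2}^2 = \langle (-\partial^2 + M^2) f, f \rangle = \sum_k (2k+1)|\alpha_k|^2 \le (2N+1)\|f\|_{L^2}^2.$$
In particular each of the two summands on the left is individually dominated by $(2N+1)\|f\|_{L^2}^2$.

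Combining the two observations with the inductive hypothesis and applying the auxiliary estimate to $f = H_n^{(\ell)}$ (resp.\ $f = M^\ell H_n$) with $N = n+\ell$, one gets
$$\|H_n^{(\ell+1)}\|_{L^2}^2 \le (2(n+\ell)+1)\|H_n^{(\ell)}\|_{L^2}^2 \le (2n+2\ell+1)(2n+2\ell)^{\ell} \le (2n+2\ell+2)^{\ell+1},$$
and similarly for $M^{\ell+1}H_n$, closing the induction. There is no real obstacle here: once the harmonic oscillator identity is invoked, the rest is a purely algebraic comparison $(2n+2\ell+1)(2n+2\ell)^\ell \le (2n+2(\ell+1))^{\ell+1}$, which holds term by term.
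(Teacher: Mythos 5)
Your proof is correct, but it takes a genuinely different route from the paper's. The paper also inducts on $\ell$, but its inductive step is purely coefficient-based: it writes $M^{\ell+1}H_n = \tfrac{1}{\sqrt2}\bigl(\sqrt{n}\,M^{\ell}H_{n-1}+\sqrt{n+1}\,M^{\ell}H_{n+1}\bigr)$ via the three-term recurrence, applies the triangle inequality and the inductive hypothesis at indices $n\pm1$, and then massages the resulting expression into $(2n+2(\ell+1))^{\frac{\ell+1}{2}}$; the derivative case is handled by the remark that the argument is identical. You instead combine a support observation (the Hermite expansion of $H_n^{(\ell)}$ or $M^\ell H_n$ lives on indices at most $n+\ell$) with the energy identity $\|\partial f\|_{L^2}^2+\|Mf\|_{L^2}^2=\sum_k(2k+1)|\alpha_k|^2$, which packages both operators symmetrically and yields the single clean step $\|H_n^{(\ell+1)}\|^2\le(2n+2\ell+1)\|H_n^{(\ell)}\|^2$. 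What your approach buys: it avoids tracking the explicit recurrence coefficients, treats $\partial$ and $M$ in one stroke, and if iterated without the final relaxation actually gives the sharper bound $\prod_{j=0}^{\ell-1}(2n+2j+1)$. What the paper's approach buys: it is entirely elementary, using only the recurrence relations \eqref{MultiplicationHermite}--\eqref{DerivationHermite} and the triangle inequality, with no appeal to the spectral decomposition of the harmonic oscillator or to Parseval. Your final algebraic comparison $(2n+2\ell+1)(2n+2\ell)^{\ell}\le(2n+2\ell+2)^{\ell+1}$ is valid (including the degenerate case $n=\ell=0$ with the convention $0^0=1$), so there is no gap.
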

\begin{proof}
 The proof is a simple induction on $\ell \in \mathbb{N}$.
 For $\ell = 0$ both inequalities are obvious, for the Hermite functions are an orthonormal family of $L^2(\mathbb{R})$.
 Given $\ell \in \mathbb{N}$, assume the inequalities to be true for $\ell$ and all $n \in \mathbb{N}$.
 Owing to Equation $\eqref{MultiplicationHermite}$ and the induction assumption, we have, for $n\in\mathbb{N}$,
 \begin{align*}
   \|M^{\ell+1}H_n\|_{L^2(\mathbb{R})} &\leq \frac{1}{\sqrt{2}}  \left(\sqrt{n} \|M^{\ell}H_{n-1}\|_{L^2(\mathbb{R})} + \sqrt{n+1} \|M^{\ell}H_{n+1}\|_{L^2(\mathbb{R})} \right) \\
   & \leq \frac{1}{\sqrt{2}} \left(\sqrt{n} (2n+2\ell-2)^{\frac{\ell}{2}} + \sqrt{n+1}  (2n+2\ell+2)^{\frac{\ell}{2}} \right) \\
   & \leq \frac{1}{\sqrt{2}} \left(\sqrt{n+\ell+1} (2n+2\ell+2)^{\frac{\ell}{2}} + \sqrt{n+\ell+1}  (2n+2\ell+2)^{\frac{\ell}{2}} \right) \\
   & = (2n+2(\ell+1))^{\frac{\ell+1}{2}}.
 \end{align*}
Thr proof for multiple applications of the derivative is similar.

\end{proof}

 \section{The representation-theoretic Fourier transform.}
 \label{appendix:FourierRepresentation}
 We collect here some standard results about the Fourier transform as defined through unitary irreducible reprentations.
 We refer the reader to \cite{BahouriFermanianGallagherHeisenberg}, \cite{BahouriFermanianGallagher2Step}, \cite{BealsGreiner}, \cite{CorwinGreenleaf},
 \cite{FarautHarzallah}, \cite{FischerRuzhansky}, \cite{Folland}, \cite{Geller}, \cite{LavanyaThangavelu}, \cite{Rudin}, \cite{Stein}, \cite{Taylor} and \cite{Thangavelu}
 for further details.
 We begin with a familiar continuity statement on $L^1(\mathbb{R}^d)$.
 \begin{theorem}
 The Fourier transformation is continuous in all its variables, in the following sense.
 \begin{itemize}
  \item  For any $\lambda \in \widetilde{\Lambda}$ and $\nu \in \mathbb{R}^t$, the map
 $$\mathcal{F}^g(\cdot)(\lambda,\nu) : L^1(\mathbb{R}^d) \longrightarrow \mathcal{L}(L^2(\mathbb{R}^d))$$
 is linear and continuous, with norm bounded by $1$.
 \item For any $u \in L^2(\mathbb{R}^d)$ and $f\in L^1(\mathbb{R}^d)$, the map
 $$\mathcal{F}^g(f)(\cdot,\cdot)(u) : \widetilde{\Lambda} \times \mathbb{R}^t \longrightarrow L^2(\mathbb{R}^d) $$
 is continuous.
 \end{itemize}
 \end{theorem}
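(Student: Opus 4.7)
The plan is to handle the two assertions in turn, both essentially by combining the unitarity of the representations $u^{\lambda,\nu}_w$ with an application of Lebesgue's dominated convergence theorem.

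For the first assertion, fix $(\lambda,\nu) \in \widetilde{\Lambda} \times \mathbb{R}^t$ and $\phi \in L^2(\mathbb{R}^d)$. I would pass the $L^2$ norm inside the defining integral using Minkowski's integral inequality, then exploit the fact that each $u^{\lambda,\nu}_w$ is a unitary operator on $L^2(\mathbb{R}^d)$, to obtain
\begin{equation*}
\|\mathcal{F}^g(f)(\lambda,\nu)\phi\|_{L^2(\mathbb{R}^d)}
\le \int_{\mathbb{R}^n}|f(w)|\,\|u^{\lambda,\nu}_w\phi\|_{L^2(\mathbb{R}^d)}\,dw
= \|f\|_{L^1(\mathbb{R}^n)}\,\|\phi\|_{L^2(\mathbb{R}^d)}.
\end{equation*}
Linearity in $f$ is immediate from the definition of $\mathcal{F}^g$, and the estimate above simultaneously gives both the continuity and the operator norm bound $\le 1$.

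For the second assertion, fix $f\in L^1(\mathbb{R}^n)$ and $u\in L^2(\mathbb{R}^d)$, pick $(\lambda_0,\nu_0)\in\widetilde{\Lambda}\times\mathbb{R}^t$, and consider a sequence $(\lambda_k,\nu_k)\to(\lambda_0,\nu_0)$. Writing the difference as a Bochner integral with values in $L^2(\mathbb{R}^d)$,
\begin{equation*}
\mathcal{F}^g(f)(\lambda_k,\nu_k)u - \mathcal{F}^g(f)(\lambda_0,\nu_0)u
= \int_{\mathbb{R}^n} f(w)\,\bigl[u^{\lambda_k,\nu_k}_w u - u^{\lambda_0,\nu_0}_w u\bigr]\,dw,
\end{equation*}
I would apply dominated convergence twice. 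First, for each fixed $w=(x,y,r,s)$, the explicit formula
\begin{equation*}
(u^{\lambda,\nu}_w u)(\xi) = e^{-i\langle\nu,r\rangle} e^{-i\langle \lambda, s+[\xi+x/2,y]\rangle} u(\xi+x)
\end{equation*}
shows pointwise convergence in $\xi$ of the phase factor, while the modulus $|u(\xi+x)|$ provides an $L^2$ dominator; hence $u^{\lambda_k,\nu_k}_w u \to u^{\lambda_0,\nu_0}_w u$ strongly in $L^2(\mathbb{R}^d)$ for each $w$. Second, the vector-valued integrand is uniformly bounded by $2|f(w)|\,\|u\|_{L^2(\mathbb{R}^d)}\in L^1(\mathbb{R}^n)$ thanks to unitarity, so a vector-valued dominated convergence in $w$ concludes.

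The one point that genuinely requires a moment of care is checking that the map $w\mapsto u^{\lambda,\nu}_w u - u^{\lambda_0,\nu_0}_w u$ is strongly measurable as an $L^2(\mathbb{R}^d)$-valued function, so that Bochner integration and the vector-valued dominated convergence theorem apply; this follows from the joint continuity of the phase factor in $(w,\xi)$ together with the separability of $L^2(\mathbb{R}^d)$, and is standard. Everything else is routine and parallels the familiar proof of strong continuity of the Schrödinger representation on the Heisenberg group.
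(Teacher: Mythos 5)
The paper states this theorem in Appendix B without proof, presenting it as a standard fact with references to the literature, so there is no in-paper argument to compare against. Your proof is correct and is the standard one: Minkowski's integral inequality combined with the unitarity of $u^{\lambda,\nu}_w$ gives the norm bound $\|\mathcal{F}^g(f)(\lambda,\nu)\|_{\mathcal{L}(L^2)} \le \|f\|_{L^1}$, and the two-level dominated convergence argument (first in $\xi$ for fixed $w$, then in $w$ with dominator $2|f(w)|\,\|u\|_{L^2}$) correctly yields the strong continuity in $(\lambda,\nu)$, with the strong measurability point you flag handled exactly as you describe.
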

 We continue with the analogues of the Plancherel and inversion formula in $L^2(\mathbb{R}^d)$.
 Let 
 $$\|\cdot\|_{HS(L^2(\mathbb{R}^d))}$$ 
 be the usual Hilbert-Schmidt norm on operators acting on $L^2(\mathbb{R}^d)$ and 
 $$\text{Pf}(\lambda) := \prod_{j=1}^d \eta_j(\lambda) $$
 be the (absolute value of the) pfaffian of the matrix $\mathcal{U}^{(\lambda)}$.
 With these notations, the Fourier transform $\mathcal{F}^g(\cdot)$ extends, up to a multiplicative constant, to an isometry from $L^2(\mathbb{R}^d)$ 
 to the two-parameter families $$(A(\lambda,\nu))_{(\lambda,\nu) \in \widetilde{\Lambda} \times \mathbb{R}^t}$$
 of Hilbert-Schmidt operators, endowed with the norm
 $$\|A\| := \left(\int_{\widetilde{\Lambda} \times \mathbb{R}^t} \|A(\lambda,\nu)\|_{HS(L^2(\mathbb{R}^d))}^2 \text{Pf}(\lambda) d\lambda d\nu\right)^{\frac 12}. $$
 More precisely, we have the following theorem.
 \begin{theorem}
  There exists some constant $\kappa > 0$ depending only on the choice of the group such that, for any $f \in L^2(\mathbb{R}^d)$, there holds
 $$\int_{\mathbb{R}^n} |f(w)|^2 dw = \kappa \int_{\widetilde{\Lambda} \times \mathbb{R}^t} \|\mathcal{F}^g(f)(\lambda,\nu)\|_{HS(L^2(\mathbb{R}^d))}^2 \textrm{Pf}(\lambda) d\lambda d\nu. $$
 \end{theorem}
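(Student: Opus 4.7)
The plan is to first reduce to the case $t=0$ using standard Plancherel in the $\nu$ variable (which, as noted in the body of the paper, commutes with $\mathcal{F}_g$), and then establish the formula on Schwartz data by an explicit change of variable, extending to $L^2(\mathbb{R}^n)$ by density. I would work throughout in an adapted basis in which $\mathcal{U}^{(\lambda)}$ takes the canonical antidiagonal form with diagonal blocks $\eta_j(\lambda)J$, so that $\langle \lambda, [\xi + x/2, y] \rangle$ reads simply as $\langle \eta(\lambda) \cdot (\xi + x/2),\, y \rangle$.

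For $f \in \mathcal{S}(\mathbb{R}^n)$ and $\lambda \in \widetilde{\Lambda}$, I would first read off the integral kernel of the Hilbert--Schmidt operator $\mathcal{F}^g(f)(\lambda)$ acting on $L^2(\mathbb{R}^d)$. Writing the defining integral and substituting $\xi' = \xi + x$ gives
\[
K_\lambda(\xi, \xi') = \int_{\mathbb{R}^d \times \mathbb{R}^p} f(\xi' - \xi,\, y,\, s)\, e^{-i\langle \lambda, s \rangle} e^{-i\langle \eta(\lambda)\cdot (\xi + \xi')/2,\, y \rangle}\, dy\, ds,
\]
so that $K_\lambda(\xi,\xi') = \tilde{f}\bigl(\xi' - \xi,\, \eta(\lambda)\cdot (\xi + \xi')/2,\, \lambda\bigr)$, where $\tilde{f}(x, \eta', \lambda)$ denotes the ordinary Fourier transform of $f$ in the $(y,s)$ variables.

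The key step is then the change of variable $(\xi, \xi') \mapsto (x, u) := (\xi'-\xi,\, (\xi + \xi')/2)$, which has Jacobian $1$, followed by $u \mapsto \eta' := \eta(\lambda) \cdot u$. Since $\lambda \in \widetilde{\Lambda}$ means $\eta(\lambda)$ is a bijective diagonal transformation of $\mathbb{R}^d$, this second change of variable is licit with Jacobian $\prod_{j=1}^d \eta_j(\lambda) = \text{Pf}(\lambda)$. This yields
\[
\text{Pf}(\lambda)\, \|\mathcal{F}^g(f)(\lambda)\|^2_{HS(L^2(\mathbb{R}^d))} = \int_{\mathbb{R}^d \times \mathbb{R}^d} |\tilde{f}(x, \eta', \lambda)|^2\, dx\, d\eta'.
\]
Integrating over $\lambda$ (using that $\widetilde{\Lambda}$ has full Lebesgue measure in $\mathbb{R}^p$) and then applying the standard Plancherel identity to $\tilde{f}$, viewed as the Fourier transform of $f$ in the $(y,s)$ variables, delivers the desired formula with $\kappa = (2\pi)^{-(d+p)}$ (up to the normalization convention chosen for the Euclidean Fourier transform).

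The main obstacle is bookkeeping rather than conceptual: one must carefully justify the Fubini exchanges needed to identify $K_\lambda$ as a partial Fourier transform of $f$ on Schwartz data, verify that the change of variable involving $\eta(\lambda)^{-1}$ does not degenerate on the full-measure set $\widetilde{\Lambda}$, and finally extend the resulting isometry from the Schwartz class to all of $L^2(\mathbb{R}^n)$ by density. Once the Schwartz identity is in hand, the extension is automatic since both sides are continuous on $L^2$ (in fact, the right-hand side defines a Hilbert norm on the relevant space of operator fields).
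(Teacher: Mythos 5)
Your proposal is sound, but note that the paper itself does not prove this theorem: it appears in Appendix \ref{appendix:FourierRepresentation}, which the author explicitly presents as a collection of standard results quoted from the literature (Bahouri--Chemin--Danchin, Corwin--Greenleaf, Folland, Taylor, etc.). So there is no in-paper proof to compare against; what you have written is essentially the standard textbook argument, and it is correct. Your key steps --- identifying the integral kernel $K_\lambda(\xi,\xi') = \tilde f\bigl(\xi'-\xi,\ \eta(\lambda)\cdot(\xi+\xi')/2,\ \lambda\bigr)$ of $\mathcal{F}^g(f)(\lambda)$ as a partial Euclidean Fourier transform of $f$, computing the Hilbert--Schmidt norm as the $L^2$ norm of the kernel, and extracting the Pfaffian as the Jacobian of $u\mapsto \eta(\lambda)\cdot u$ on the full-measure set $\widetilde\Lambda$ --- are exactly how this identity is derived in the cited sources, and the reduction to $t=0$ by commuting with the Euclidean Plancherel theorem in $\nu$ is consistent with the paper's own remark that $\mathcal{F}_{\mathbb{R}^t}$ commutes with $\mathcal{F}_g$. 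Two small caveats. First, the adapted basis diagonalizing $\mathcal{U}^{(\lambda)}$ depends on $\lambda$; since it is orthogonal it preserves Lebesgue measure and $L^2$ norms, but you should say a word about measurability in $\lambda$ when integrating over $\widetilde\Lambda$. Second, your value $\kappa=(2\pi)^{-(d+p)}$ does not match the value $\kappa(\mathbb{H}^d)=2^{d-1}/\pi^{d+1}$ quoted in the paper for the Heisenberg group; this is a normalization mismatch (the quoted value comes from the conventions of \cite{BahouriCheminDanchinHeisenberg}, which differ from this paper's definition of $u^{\lambda,\nu}_w$ by factors of $2$), and since the theorem only asserts the existence of some $\kappa>0$ this does not affect the validity of your argument, but you should not present the explicit constant as matching the paper's without reconciling the conventions.
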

 On the Heisenberg group $\mathbb{H}^d$, the pfaffian is simply $\text{Pf}(\lambda) = |\lambda|^d$ and the value of $\kappa$ is known, namely
 $$\kappa(\mathbb{H}^d) = \frac{2^{d-1}}{\pi^{d+1}}.$$
 In this context, the inversion formula reads, for $f\in L^1(\mathbb{R}^d)$ and almost every $w \in \mathbb{R}^n$,
 $$f(w) = \kappa \int_{\widetilde{\Lambda} \times \mathbb{R}^p} \text{tr}((u^{\lambda,\nu}_w)^* \mathcal{F}^g(f)(\lambda,\nu)) \text{Pf}(\lambda) d\lambda d\nu, $$
 with the same constant $\kappa > 0$.
 Finally, the Fourier transform exchanges as usual convolution and product, in the following sense.
 The convolution operator $\ast : L^1(\mathbb{R}^d) \times L^1(\mathbb{R}^d) \to L^1(\mathbb{R}^d)$ is defined as follows.
 For any $f_1,f_2 \in L^1(\mathbb{R}^d)$ and $(Z,s) \in \mathbb{R}^n$,
 $$(f_1 \ast f_2)(Z,s) := \int_{\mathbb{R}^n} f_1((Z,s)\cdot (-Z',-s')) f_2(Z',s') dZ' ds'.$$
 The convolution-product intertwining through the Fourier transform may now be stated.
 \begin{theorem}
   For any $f_1,f_2 \in L^1(\mathbb{R}^d)$ and $(\lambda,\nu) \in \widetilde{\Lambda} \times \mathbb{R}^t$, we have,
 denoting by $\cdot$ the operator composition on $\mathcal{L}(L^2(\mathbb{R}^d))$,
 \begin{equation}
  \mathcal{F}^g(f_1\ast f_2)(\lambda,\nu) = \mathcal{F}^g(f_1)(\lambda,\nu)\cdot \mathcal{F}^g(f_2)(\lambda,\nu).
  \label{FormuleConvolution}
 \end{equation}
 \end{theorem}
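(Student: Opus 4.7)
The plan is to reduce the identity to the homomorphism property of the representation $u^{\lambda,\nu}$, namely $u_{w\cdot w'}^{\lambda,\nu} = u_w^{\lambda,\nu}\circ u_{w'}^{\lambda,\nu}$. This is the ``quantization'' of the group law and is standard for Schr\"odinger-type representations of $2$-step nilpotent groups; it can be checked directly from the explicit formula for $u_w^{\lambda,\nu}$ given in the introduction by a line of computation that uses only the bilinearity and antisymmetry of $\sigma$ together with the fact that the group law reads $(Z,s)\cdot(Z',s')=(Z+Z',s+s'+\tfrac12\sigma(Z,Z'))$. Since this is a well-known fact (and the appendix already cites several references that develop the representation theory), I would simply recall it and use it as the main ingredient.

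Starting from the definition, I would write
$$\mathcal{F}^g(f_1\ast f_2)(\lambda,\nu)=\int_{\mathbb{R}^n}\int_{\mathbb{R}^n} f_1(w\cdot(-w'))\,f_2(w')\,u_w^{\lambda,\nu}\,dw'\,dw.$$
The next step is the change of variable $w\mapsto w''=w\cdot(-w')$ for fixed $w'$. One first checks that $-w':=(-Z',-s')$ is the group inverse of $w'$, using $\sigma(Z',-Z')=0$; then, because the Lebesgue measure on $\mathbb{R}^n$ is bi-invariant for a nilpotent Lie group (the group law differs from the additive one only by terms depending on the other variable, so the Jacobian is $1$), one has $dw=dw''$ at fixed $w'$. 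Substituting $w=w''\cdot w'$ gives
$$\mathcal{F}^g(f_1\ast f_2)(\lambda,\nu)=\int_{\mathbb{R}^n}\int_{\mathbb{R}^n} f_1(w'')\,f_2(w')\,u_{w''\cdot w'}^{\lambda,\nu}\,dw''\,dw'.$$

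Now I apply the homomorphism property $u_{w''\cdot w'}^{\lambda,\nu}=u_{w''}^{\lambda,\nu}\circ u_{w'}^{\lambda,\nu}$ and use Fubini together with the fact that the composition of bounded operators is bilinear and jointly continuous for the strong operator topology, to split the double integral into the composition of two single integrals. Since $u_w^{\lambda,\nu}$ is unitary, $\|u_w^{\lambda,\nu}\|_{\mathcal{L}(L^2)}=1$, and $f_1,f_2\in L^1(\mathbb{R}^n)$, all Bochner-type integrals involved converge absolutely in the operator norm, which legitimizes Fubini and the factorization. One obtains
$$\mathcal{F}^g(f_1\ast f_2)(\lambda,\nu)=\Bigl(\int_{\mathbb{R}^n}f_1(w'')u_{w''}^{\lambda,\nu}dw''\Bigr)\circ\Bigl(\int_{\mathbb{R}^n}f_2(w')u_{w'}^{\lambda,\nu}dw'\Bigr),$$
which is exactly $\mathcal{F}^g(f_1)(\lambda,\nu)\cdot\mathcal{F}^g(f_2)(\lambda,\nu)$.

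The only real obstacle is the verification of the homomorphism property, which boils down to collecting the phase factors in $e^{-i\langle\lambda,s+[\xi+x/2,y]\rangle}$ after composing two such operators and recognizing that the cross terms reconstruct precisely the $\tfrac12\sigma$ correction appearing in the group law. Everything else (change of variable, Fubini, strong-operator continuity of composition) is routine once one has the boundedness estimate $\|\mathcal{F}^g(f)(\lambda,\nu)\|_{\mathcal{L}(L^2)}\leq\|f\|_{L^1}$ recalled earlier in the appendix.
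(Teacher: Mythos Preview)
Your argument is correct and is exactly the standard proof of this fact. Note, however, that the paper does not actually supply its own proof of this theorem: it is stated in the appendix as one of several ``standard results'' about the representation-theoretic Fourier transform, with references to the literature (Corwin--Greenleaf, Folland, Taylor, Thangavelu, etc.) in lieu of an argument. So there is nothing to compare your approach against; you have simply filled in the omitted details in the expected way---change of variables using bi-invariance of Lebesgue measure, the homomorphism property $u_{w\cdot w'}^{\lambda,\nu}=u_w^{\lambda,\nu}\circ u_{w'}^{\lambda,\nu}$, and Fubini justified by the uniform bound $\|u_w^{\lambda,\nu}\|\le 1$.
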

 Finally, as in the classical commutative theory, the Fourier transform allows us to diagonalize the action of the subelliptic laplacian on $g$, whose definition we recall.
 If $V := (V_1,\dots,V_m)$ is an orthonormal family such that $V \cup \{\partial_{s_k}, 1 \leq k \leq p\}$ is a basis of $g$, 
 the subelliptic laplacian with respect to the family $V$ is, by definition,
  $$\Delta_g := \sum_{j=1}^m V_j^2. $$
 One may prove that this definition is independant of $V$ provided it satisfies the two stated conditions.
 For $\lambda \in \widetilde{\Lambda}$, $f\in \mathcal{C}^{\infty}(\mathbb{R}^d)$ and $x\in\mathbb{R}^d$, define also
 the rescaled harmonic oscillator 
 $$(-\Delta_{\text{osc},\eta(\lambda)}f)(x):= (-\Delta + |\eta(\lambda)\cdot x|^2)f(x). $$
 Above, $\Delta$ is the standard laplacian acting on smooth functions on $\mathbb{R}^d$.
 We may know state how the Fourier transform diagonalizes the action of the laplacian.
 \begin{theorem}
  Let $f\in  \mathcal{C}^{\infty}_c(\mathbb{R}^n)$.
  Let $(\lambda,\nu) \in \widetilde{\Lambda} \times \mathbb{R}^t$.
  Then, for $u \in \mathcal{C}^{\infty}_c(\mathbb{R}^d)$, there holds
  $$\mathcal{F}^g(-\Delta_g f)(\lambda,\nu)(u) = \mathcal{F}^g(f)(\lambda,\nu) \left( -\Delta_{\text{osc},\eta(\lambda)}u + |\nu|^2u\right).$$
 \end{theorem}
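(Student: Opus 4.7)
The plan is to deduce the theorem from the standard infinitesimal-representation identity
$$\mathcal{F}^g(Vf)(\lambda,\nu) = -\mathcal{F}^g(f)(\lambda,\nu) \circ du^{\lambda,\nu}(V),$$
valid for any left-invariant vector field $V$ on $\mathbb{R}^n$ and any $f\in \mathcal{C}_c^\infty(\mathbb{R}^n)$, where $du^{\lambda,\nu}$ denotes the derived representation of $u^{\lambda,\nu}$. To establish it I would write $(Vf)(w) = \frac{d}{d\tau}\big|_{\tau=0} f(w\cdot\exp(\tau V))$, insert this into the integral defining $\mathcal{F}^g(Vf)$, perform the change of variable $w \leftarrow w\cdot\exp(-\tau V)$ (Lebesgue measure is both left and right Haar on a simply connected nilpotent group), and differentiate the representation identity $u^{\lambda,\nu}_{w\exp(-\tau V)} = u^{\lambda,\nu}_w \circ u^{\lambda,\nu}_{\exp(-\tau V)}$ at $\tau=0$. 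Iterating once gives
$$\mathcal{F}^g(V^2 f)(\lambda,\nu) = \mathcal{F}^g(f)(\lambda,\nu) \circ du^{\lambda,\nu}(V)^2.$$

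The next step is to compute $du^{\lambda,\nu}$ on the adapted orthonormal basis $(X_j, Y_j, R_l)$ by directly differentiating the explicit formula for $u^{\lambda,\nu}_w$ at $w = \exp(\tau V)$ and $\tau=0$. For $X_j$ only the $j$-th $x$-component of $w$ is nonzero, the phase $e^{-i\langle\lambda, s + [\xi + x/2, y]\rangle}$ is trivial, and one reads off $du^{\lambda,\nu}(X_j) = \partial_{\xi_j}$. For $Y_j$ the argument of $\phi$ is unchanged while only the bracket contributes a phase; using $\langle \lambda, [\xi, e_j]\rangle = \eta_j(\lambda)\xi_j$ in the adapted basis, one gets $du^{\lambda,\nu}(Y_j) = -i\eta_j(\lambda)\xi_j$. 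For $R_l$ only the $\nu$-phase survives, yielding $du^{\lambda,\nu}(R_l) = -i\nu_l$.

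Summing the squares and invoking the basis-independence of $\Delta_g = \sum_j (X_j^2 + Y_j^2) + \sum_l R_l^2$ asserted just above the theorem, one obtains
$$-du^{\lambda,\nu}(\Delta_g) = \sum_j\bigl(-\partial_{\xi_j}^2 + \eta_j(\lambda)^2\xi_j^2\bigr) + |\nu|^2 = -\Delta_{\mathrm{osc},\eta(\lambda)} + |\nu|^2.$$
Plugging this into the twice-iterated identity and evaluating on $u\in \mathcal{C}_c^\infty(\mathbb{R}^d)$ yields the announced formula.

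The main obstacle, modest but real, is sign and composition-order bookkeeping: left-invariance of $V$ produces the minus sign via the translation $w \mapsto w\cdot\exp(-\tau V)$, and $du^{\lambda,\nu}(V)$ acts on the \emph{right} of $\mathcal{F}^g(f)(\lambda,\nu)$, which matters because $du^{\lambda,\nu}(X_j)$ and $du^{\lambda,\nu}(Y_j)$ do not commute. Differentiation under the integral is easily justified by the compact support of $f$ together with the uniform bound $\|u^{\lambda,\nu}_w\|_{\mathcal{L}(L^2(\mathbb{R}^d))} \le 1$ recalled in Appendix \ref{appendix:FourierRepresentation}.
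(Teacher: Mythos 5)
Your argument is correct, but note that the paper itself offers no proof of this statement: it appears in Appendix \ref{appendix:FourierRepresentation} as one of the ``standard results about the Fourier transform as defined through unitary irreducible representations'' that are simply recalled, with the reader referred to the cited literature. What you have written is precisely the classical derivation one finds there: the derived-representation identity $\mathcal{F}^g(Vf)(\lambda,\nu)=-\mathcal{F}^g(f)(\lambda,\nu)\circ du^{\lambda,\nu}(V)$ obtained by the change of variable $w\mapsto w\cdot\exp(-\tau V)$, its iteration, and the explicit computation of $du^{\lambda,\nu}$ on the adapted basis, which indeed gives $du^{\lambda,\nu}(X_j)=\partial_{\xi_j}$, $du^{\lambda,\nu}(Y_j)=-i\eta_j(\lambda)\xi_j$ and $du^{\lambda,\nu}(R_l)=-i\nu_l$, hence $-du^{\lambda,\nu}(\Delta_g)=-\Delta_{\mathrm{osc},\eta(\lambda)}+|\nu|^2$ and the announced formula. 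Two small points deserve to be made explicit if this is to stand as a complete proof. First, the homomorphism property $u^{\lambda,\nu}_{w\cdot w'}=u^{\lambda,\nu}_w\circ u^{\lambda,\nu}_{w'}$, which you use when differentiating $u^{\lambda,\nu}_{w\exp(-\tau V)}$, is not entirely automatic from the displayed formula: it requires checking that the term $\tfrac12\sigma(Z,Z')$ in the group law \eqref{DefLoiGroupe} exactly cancels the cross terms produced by the phases $[\xi+\tfrac x2,y]$; this is a short but necessary computation. Second, the basis-independence of $\Delta_g$ is stated only for \emph{orthonormal} families $V$, so you should observe that the $\lambda$-adapted basis $(x_1,\dots,x_d,y_1,\dots,y_d,r_1,\dots,r_t)$ can be chosen orthonormal (the real normal form of an antisymmetric matrix is attained by orthogonal conjugation), which is what licenses writing $\Delta_g=\sum_j(X_j^2+Y_j^2)+\sum_l R_l^2$ in that basis for the fixed $\lambda$ under consideration. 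With these two checks supplied, the proof is complete and is the standard one.
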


\end{document}